\documentclass{amsart}
\usepackage[margin=2cm]{geometry} %Margins
\usepackage{setspace} % Line spacing
\usepackage{graphicx}%for inserting images
\usepackage[boxsize= .3 em]{ytableau}

\usepackage[]{hyperref} % makes coloured table of contents 
\hypersetup{colorlinks=true,} 
\usepackage{xcolor} % general coloured text

\usepackage{amsmath,amsthm,amssymb,amsxtra} 
\usepackage{mathtools}
\usepackage{xfrac} % for split-level quotients
\usepackage{quiver} % for commutative diagrams from quiver

\usepackage[capitalise]{cleveref}

\numberwithin{equation}{section}

\newtheorem{theorem}{Theorem}[section]
\newtheorem{corollary}[theorem]{Corollary}
\newtheorem{lemma}[theorem]{Lemma}

\newtheorem{proposition}[theorem]{Proposition}

\newtheorem{conjecture}[theorem]{Conjecture}

\theoremstyle{definition}
\newtheorem{definition}[theorem]{Definition}

\theoremstyle{remark}

\newtheorem{remark}[theorem]{Remark}

\newtheorem{example}{Example}[theorem]
\AtBeginEnvironment{example}{%
  \pushQED{\qed}%
}
\AtEndEnvironment{example}{\popQED\endexample}

\newcommand{\N}{\mathbb{N}}
\newcommand{\Z}{\mathbb{Z}}
\newcommand{\C}{\mathbb{C}}
\newcommand{\R}{\mathbb{R}}

\newcommand{\Fl}{\operatorname{Fl}}
\newcommand{\MaxDiag}{\operatorname{MaxDiag}}

\DeclareMathOperator{\minor}{Minor}
       
\newcommand{\qbinom}[2]{\left[ \genfrac{}{}{0pt}{}{#1}{#2} \right]}
\DeclareMathOperator{\Toep}{\mathrm{Toep}}

\newcommand{\inv}{^{-1}}

\newcommand{\dedication}[1]{%
  \begin{center}
    \textit{#1}
  \end{center}
}

\title{Grassmannian Quantum cohomology in the infinite limit and total positivity}
\author{In\'es Chung-Halpern and Konstanze Rietsch}

\thanks{In\'es Chung-Halpern is funded by a London School of Geometry and Number Theory–King’s College London PhD studentship, which is supported by the Engineering and Physical Sciences Research Council [EP/S021590/1]}

\begin{document}

\maketitle

\dedication{In honour of George Lusztig on the occasion of his $80^{th}$ birthday.}

\begin{abstract} 
The theory of total positivity  was shown by Lusztig to be intrinsically linked to the canonical basis with its positivity properties.  When we restrict ourselves to studying total positivity just for the set of lower-triangular unipotent Toeplitz matrices,  say in type $A$, then there is a similar link with the quantum cohomology rings of  flag varieties and the Schubert bases and their positivity properties. Namely, this builds on a theory of Dale Peterson \cite{peterson} that gives a uniform Lie-theoretic description of all of the quantum cohomology rings $qH^*(G/P)$.  
In a precursor \cite{rietsch2025totallypositivetoeplitzmatrices} to this paper, the Schubert basis and quantum parameters in $qH^*(SL_n/B)$, which restrict to positive-valued functions on totally positive Toeplitz matrices,  were analysed with respect to their limiting behaviour as $n\to\infty$, uncovering a novel connection with the classical Edrei theorem on parametrising the infinite totally positive Toeplitz matrices.  
In this paper we study the Grassmannian case, using the conventions from the $SL_{n}/B$ setting as a guide, and we determine the quantum parameter and Schubert class asymptotics in different scenarios. 
Along the way, we obtain a new interpretation of the strange duality involution on the localised quantum cohomolgy ring of the Grassmannian. Finally, we prove an asymptotic formula for quantum parameters in a partial flag setting, and we furthermore formulate some conjectures concerning partial flag varieties and related quantum cohomology asymptotics.  
 \end{abstract}

 \section{Introduction}
 \label{sec:Intro}
Classically, a totally positive matrix is a real matrix all of whose minors are positive. The theory of totally positive matrices dates back to the early 20$^{\rm th}$ century pioneered  by mathematicians including Polya, Fekete, Gantmacher, Schoenberg and others. Lusztig's work on total positivity, beginning in the 1990's with the seminal paper \cite{Lusztig94}, gave a generalisation of the theory to all reductive algebraic groups and their homogeneous spaces, as well as uncovering two relationships with his canonical basis: one, the fact that in simply-laced types totally positive elements act by positive matrices in \textit{any} irreducible representation with regard to the canonical basis of that representation, the other, a remarkable interpretation of the parametrisation of canonical basis elements using a tropical version of total positivity. Lusztig's body of work on total positivity, including~\cite{Lusztig94,LUSZTIG1997,Lusztig1998TotalPI,Lusztig2019TotalPI,LUSZTIG2023}, has let to a resurgence of this field and has had a very wide range of further impacts. We mention here Fomin and Zelevinski's theory of cluster algebras~\cite{FOMINICM2010}, Postnikov's combinatorial perspective on the totally positive Grassmannian~\cite{postnikovgrassmannian}, generalisations to Kac-Moody groups~\cite{LamPP,BaoHe}, applications to the study of moduli of local systems on a surface \cite{FockGoncharovI}, integrable systems (KP equation) related to water waves \cite{KodamaWilliams}, and calculation of scattering amplitudes~\cite{Arkani-Hamedetalbook,ArkaniHamed2013jha} as examples.

Relevant to our purposes is another application,  which connects the theory of total positivity with the theory of quantum cohomology for flag varieties $G/P$, Toda lattices and Toeplitz matrices~\cite{peterson,rietsch2001flagvarieties}. Our goal will be to study the typa $A_n$ asymptotics of this theory with particular emphasis on the Grassmannian case. We now focus on type $A$ and
 begin by recalling a theorem that relates totally nonnegative Toeplitz matrices to quantum cohomology rings of partial flag varieties $SL_{n+1}/P$. This theorem was  inspired by Lusztig's characterisation of the totally positive part of $G$ as consisting of elements $g\in G$ whose canonical basis matrix coefficients are positive for all irreducible representations of $G$. To state the theorem, let us first define Dale Peterson's `Toeplitz variety' associated to $SL_{n}/P$. 
 We use standard notations from algebraic groups for which a reference is \cite{Springer:book}.  Let $B$ be the upper-triangular Borel subgroup of $SL_n$ (also denoted $B_n$), and let  $P\supseteq B$ be the parabolic subgroup associated to the partial flag variety $SL_n/P=\Fl_{n_1,\dotsc,n_k}(\C^n)$ of flags of subspaces $V_i$ with $\dim(V_i)=n_i$.  We may also write $P_{\mathbf n}$ for $P$ if we wish to keep track of the dimension vector $\mathbf n=(n_1,\dotsc, n_k)$.
 We  write $I=\{1,\dotsc, n-1\}$ for the indexing set of simple reflections $\{s_i\mid i\in I\}$ of the Weyl group $W=S_n$, and $I^P$ for the subset $\{n_1,\dotsc, n_k\}$ corresponding to $P$. Let $W_P$ be the parabolic subgroup of $W$ associated to $P$. The Schubert cells in $SL_n/P$ are indexed by the set of minimal coset representatives $W^P$ for $W/W_P$, which is also the set of $w\in W$ for which all reduced expressions end in $s_i$ with $i\in I^P$.  In the following, all varieties will be defined over $\C$, and may be identified with their $\C$-valued points. Let $U_-\subset SL_n(\C)$ be the unipotent lower-triangular subgroup, and  write $U_-(\R_{\ge 0})$ for the totally nonnegative part of $U_-$.  
\begin{definition}\label{d:PetersonToeplitz}
Let $X\subset U_-$ be the subgroup of Toeplitz matrices (matrices with constant entries along diagonals). This is also the stabilizer subgroup of the standard principal nilpotent $F=\sum F_i$, see Section~\ref{s:conventions} for more detail. The (type $A$) Peterson Toeplitz variety associated to the parabolic $P=P_{\mathbf n}$ is defined by
\[
X_P=X\cap B_+w_0w_P B_+,
\]
where $w_0w_P$  is the longest element in $W^P$. Let $X_P(\R_{>0})$ denote the `totally positive part' which is the intersection of $X_P$ with the set of totally nonnegative matrices in $U_-$. \end{definition}
Let $\Delta_j(u)$ denote the $j\times j$ lower left-hand corner minor of a matrix $u$. We have the following concrete description of the $\C$-valued points of $X_P$, where we also use the notation $\mathbf c=(c_{1},\dots, c_{n-n_1})$, 
\begin{equation}\label{e:XPCgen}
X_P(\C):=\left\{\left .u_{n}(\mathbf c)=\begin{pmatrix}
 1 &  & &  &        & \\
  c_{1} &1   &    &       & & \\
  \vdots &c_{1} &  \ddots &  &      &  \\
   c_{n-n_1}&     &      \ddots  & \ddots &   &  \\
   &   \ddots  &       &    c_{1}  &  1    &  \\
   0&     &     c_{n-n_1}  &     \cdots  &   c_{1}    &1
\end{pmatrix}\right | \Delta_{j}(u_n(\mathbf c))\ne 0 \text{ if and only if $j\in \{n_1,\dotsc, n_k\}$}\right\}.
\end{equation}
Note that the would-be lower right-hand corner entries $ c_{n-1},\dotsc, c_{n-n_1+1}$ are automatically $0$ as a consequence of the condition that $\Delta_j(u_n(\mathbf c))=0$ for $j=1,\dotsc, n_1-1$. The totally positive part $X_P(\R_{>0})$ is the set of those $u_{n}(c_{1},\dots c_{n-n_1})\in X_P$ for which all nontrivial minors (the entries $c_i$ included) are in $\R_{>0}$.

Recall that the (small) quantum cohomology ring of $X=SL_{n}/P$ is a deformation of the usual cohomology with deformation parameters $q_{n_i}$, called `quantum parameters'. These quantum parameters correspond to generators of the Picard group $\operatorname{Pic}(X)=\langle\mathcal L_{\omega_{n_i}}\rangle_{i=1,\dotsc, k}$, which are represented by the line bundles associated to the fundamental weights $\omega_{n_i}$ with $n_i\in I^P$. Thus 
\[qH^*(SL_{n}/P,\Z)=H^*(SL_{n}/P,\Z)\otimes_\Z \Z[q_{n_1},\dotsc, q_{n_k}],
\]
as a module over $\Z[q_{n_1},\dotsc, q_{n_k}]
$. The structure constants in terms of  the Schubert basis $\{\sigma_P^w\mid w\in W^P\}$ are  given by positive polynomials in the $q_{n_j}$, each coefficient having an interpretation as an enumerative genus~$0$, $3$-point Gromov-Witten invariant. The quantum cup product $\star$ recovers the standard cup product $\cup$, when setting the quantum parameters to $0$. 
By Dale Peterson's theory, we have the following description of the quantum cohomology ring of $SL_n/P$ after inverting quantum parameters, which is in terms of the Peterson Toeplitz variety $X_P$. Our chosen conventions here differ from those in \cite{rietsch2001flagvarieties} in a way that we explain in \cref{s:conventions}.
\begin{theorem}[Dale Peterson~\cite{peterson,rietsch2001flagvarieties,rietsch2001err}]\label{t:Peterson} Let $P=P_{\mathbf n}$ in $SL_n$. To any Grassmannian permutation $w$ with descent $j$   let $\Delta^w_j$ be its associated  $j\times j$ minor, see Definition~\ref{d:wminornew}. There is an isomorphism
\begin{eqnarray*}
\Phi_P:qH^*(SL_{n}/P,\C)[q_{n_1}\inv,\dotsc, q_{n_k}\inv]&\overset\sim\longrightarrow &\C[X_P]
\end{eqnarray*}
which is determined by the formula 
\begin{equation}\label{e:PhiP}\Phi_P(\sigma_P^{w})=\frac{\Delta_{n_i}^w}{\Delta_{n_i}},
\end{equation} 
where $w$ is a Grassmannian permutations with descent $n_i\in I^P$. 
\end{theorem}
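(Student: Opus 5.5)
This is Peterson's theorem; the authors cite it. Still, here is the proof I would give, via the Kostant--Toda / Givental--Kim presentation of $qH^*(SL_n/P)$.

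\emph{Step 1 (the Borel case).} I would start with $P=B$ and use the Givental--Kim presentation. It identifies $qH^*(SL_n/B)$ with the ring of functions on the scheme of tridiagonal matrices $A=F+\sum_i(\text{diag})+\sum_i q_iE_i$ with vanishing characteristic polynomial coefficients. Equivalently, it is the coordinate ring of the subvariety of the centraliser of $F$, taken in the Peterson model $\mathcal Y=\{(gB_-,h)\}$.

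Restricting to the open set $q_i\neq 0$ and using the Kostant slice, this becomes the Toeplitz variety $X\cap B_+\dot w_0B_+$ via Bruhat decomposition. Concretely, $u\in X$ with all $\Delta_j(u)\ne0$ factors as $u=b_1\dot w_0 b_2$. Under this factorisation the quantum parameters are ratios of consecutive corner minors, and $\sigma^{s_j}$ maps to $\Delta^{s_j}_j/\Delta_j$.

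\emph{Step 2 (Schubert classes).} To match all Schubert classes I would use the quantum Giambelli formula of Fomin--Gelfand--Postnikov. Quantum Schubert polynomials evaluated on the Givental--Kim generators give $\sigma^w$. For Grassmannian $w$ with descent $j$, the quantum Schubert polynomial reduces to a Jacobi--Trudi determinant in the quantum elementary polynomials.

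These quantum elementary polynomials are exactly the coefficients of $\det(1+tA)$ restricted to $j\times j$ submatrices. After the identification of Step 1 they correspond to entries of $u$. By Cauchy--Binet, the determinant becomes the minor $\Delta_j^w(u)$ normalised by $\Delta_j(u)$. This yields \eqref{e:PhiP} for $P=B$, and these classes generate because the $\sigma^{s_j}$ already do.

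\emph{Step 3 (parabolic case).} This is the main obstacle. I would use Peterson's comparison, as proved in \cite{rietsch2001flagvarieties} or via Woodward's theorem. It says that $qH^*(SL_n/P)[q^{-1}]$ is a quotient of $qH^*(SL_n/B)[q^{-1}]$ obtained by setting $q_i=0$ for $i\notin I^P$ after the appropriate localisation. Geometrically, this is the passage from the big cell stratum to $X_P=X\cap B_+w_0w_PB_+$.

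Concretely, I would show three things:
\begin{itemize}
\item The map sending $\sigma_P^w\mapsto\sigma_B^w$ for $w\in W^P$ is compatible with quantum products once the quantum parameters are matched as $q_{n_i}\mapsto q_{n_i}$ (Woodward/Peterson comparison).
\item The ideal cut out by the vanishing of the $\Delta_j$ for $j\notin I^P$ corresponds to the closure relation defining $X_P$ inside $X$.
\item The resulting map is an isomorphism. I would check this by comparing dimensions: both sides are free of rank $|W^P|$ over the localised quantum parameter ring, or equivalently $X_P$ is reduced and finite flat over the $q$-torus with $|W^P|$ points generically.
\end{itemize}

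The delicate part is the reducedness and flatness of $X_P$, and showing that the images $\Delta^w_{n_i}/\Delta_{n_i}$ span. For these I would rely on Peterson's and Kostant's results on the fibres of $\mathcal Y$.
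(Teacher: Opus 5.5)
Your Step 3, which is where the statement goes beyond Kostant's full-flag theorem, does not work.

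For $P\ne B$ the sets $X_P$ and $X_B$ are disjoint in $X$. Every $\Delta_j$ is nonzero on $X_B$, whereas the $\Delta_j$ with $j\notin I^P$ vanish on $X_P$. So $\C[X_P]$ is not a quotient of $\C[X_B]\cong qH^*(SL_n/B)[q\inv]$: the ideal of $\C[X_B]$ generated by these $\Delta_j$ is the unit ideal. Specialising quantum parameters does not help either. Setting $q_i=0$ for $i\in I_P$ leaves a free module of rank $|W|$ over $\C[q_{n_1}^{\pm1},\dotsc,q_{n_k}^{\pm1}]$, not $|W^P|$.

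Your first bullet is also false: the assignment $\sigma^w_P\mapsto\sigma^w_B$, $q_{n_i}\mapsto q_{n_i}$, is not multiplicative. Already for $\mathbb P^2=SL_3/P_1$ one has $\sigma^{s_1}\star\sigma^{s_1}=\sigma^{s_2s_1}$ in $qH^*(\mathbb P^2)$, but $\sigma^{s_1}\star\sigma^{s_1}=\sigma^{s_2s_1}+q_1$ in $qH^*(\Fl_3)$. The Peterson--Woodward comparison is a statement about Gromov--Witten invariants in lifted degrees with shifted Schubert indices, and it gives no such embedding. The parabolic case needs genuinely parabolic input, e.g.\ a presentation of $qH^*(SL_n/P)$ itself together with quantum Giambelli formulas for partial flag varieties. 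This is what lies behind the cited $Y_P$-version, Theorem~\ref{t:PetersonY}.

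A lesser point concerns Step 2. The quantum elementary polynomials $E^{(j)}_i$ correspond to the ratios $\Delta^{(1^i)}_j/\Delta_j$, not to entries of $u$. The Jacobi--Trudi identity among these ratios is not a consequence of Cauchy--Binet. It fails at a general point of $SL_n$ and holds only because $u\in X$.

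The paper itself does not reprove Peterson's theorem. It takes Theorem~\ref{t:PetersonY} as given and proves only the change of conventions, Proposition~\ref{p:comparison}: $\mu_P(u)=\iota(u)w_0B_-$ maps $X_P$ isomorphically onto $Y_P^\circ$ and pulls $G^{w_\lambda}_P$ back to $\Delta^{w_\lambda}_k/\Delta_k$. Using Lemma~\ref{l:lambda'}, it rewrites $\Delta^\lambda_k/\Delta_k$ as $(-1)^{|\lambda|}G^{w_\lambda}(u\inv\dot w_0B_-)$. A homogeneity argument on a Lusztig torus then shows that replacing $u\inv$ by $\iota(u)$ removes the sign.

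Your proposal never makes this check, yet it is exactly what fixes the specific minors $\Delta^w_{n_i}$ in the paper's conventions. Your assertion in Step 1 that $\sigma^{s_j}\mapsto\Delta^{s_j}_j/\Delta_j$ depends on these conventions. Identifying $X_P$ with $Y^\circ_P$ via $u\mapsto u\inv w_0B_-$ instead would twist the isomorphism by $\sigma^w_P\mapsto(-1)^{\ell(w)}\sigma^w_P$.
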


Let us write $\mathfrak S^w_{P}:=\Phi_P(\sigma^w_P)$ and keep the notation $q_{n_j}$ for $\Phi_P(q_{n_j})$. Thus, if $w$ is Grassmannian permutation then $\mathfrak S^w_P$ is as given in \eqref{e:PhiP}. This `Peterson Toeplitz isomorphim' endows the coordinate ring $\C[X_P]$ of $X_P$ with a remarkable basis,
\begin{equation}\label{e:basis}
\left\{\left. \big(\prod_{i=1}^kq^{m_i}_{n_i}\big)\mathfrak S^{w}_P \right| (m_i)_{i=1}^k\in\Z^{k}, w\in W^P\right\}.
\end{equation}
This basis has positive integer structure constants, reminding very much of the theory of total positivity and the role played by Lusztig's canonical basis (via its associated matrix coefficients). Indeed, the relationship is more than close, since the elements of $X_P(\C)$ where all the Schubert basis elements take nonnegative values turns out to agree precisely with what we will refer to as the totally positive part of $X_P$,
\[
X_P(\R_{>0}):=X_{P}(\C)\cap U_-(\R_{\ge 0}).
\] 
We also note that the positivity properties of the Schubert basis are very central to the structure of these quantum cohomology rings.     
In fact, in the Grassmannian case (and conjecturally in general), the Schubert basis is determined by its positivity properties, see \cite{BuchWang}. 

 We now recall the parametrisation theorem for $X_P(\R_{>0})$ that was proved in \cite{rietsch2001flagvarieties} using Peterson's isomorphism. A generalisation to other types was given in \cite{LamRietsch} (however with the additional condition of positivity of quantum parameters needed for  \eqref{e:poscharToep}). A different proof of the final part of this theorem using ideas from mirror symmetry was given in \cite{rietschNagoya}.
 \begin{theorem}[{\cite{rietsch2001flagvarieties}}] \label{t:totposGP}Interpreting the elements of $qH^*(SL_{n}/P)$ as functions on $X_P$ via Theorem~\ref{t:Peterson}, we have that $\sigma_P^w$ and the quantum parameters $q_{n_i}$ take positive values on the totally positive part $X_P(\R_{>0})$. Moreover, we have the following characterisation of $X_P(\R_{>0})$, 
 \begin{equation}\label{e:poscharToep}
 X_P(\R_{>0})=\{u\in X_P\mid \sigma^w_P(u)>0\ \text{ for all $w\in W^P$} \},
 \end{equation}
and the map $ \Delta_{\mathbf n}:X_P(\R_{>0})\to\R_{>0}^k$ given by 
 \begin{equation}\label{e:poschartToep}
u=u_n(c_1,\dotsc, c_{n-n_k})\mapsto (\Delta_1(u),\dotsc,\Delta_k(u))
 \end{equation}
 is a homeomorphism.
 \end{theorem}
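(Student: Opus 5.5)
The plan has three parts: positivity of the functions, the characterisation \eqref{e:poscharToep}, and the homeomorphism, with Peterson's isomorphism (Theorem~\ref{t:Peterson}) as the main tool throughout.

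\emph{Positivity.} Let $u\in X_P(\R_{>0})$. Every minor $\Delta^w_{n_i}$ and $\Delta_{n_i}$ is a minor of a totally nonnegative matrix, so it is $\ge 0$. The defining condition of $X_P$ gives $\Delta_{n_i}(u)\neq 0$, hence $\Delta_{n_i}(u)>0$. By \eqref{e:PhiP}, for a Grassmannian permutation $w$ we get $\mathfrak S^w_P(u)\ge 0$.

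To upgrade this to strict positivity for all $w\in W^P$, I would use the structure constants. Quantum Monk/Pieri-type products of Grassmannian classes have positive coefficients, and every Schubert class appears with positive coefficient in some product of Grassmannian classes. The quantum parameters can be expressed in the same way: $q_{n_i}$ appears in a suitable product of special classes, or equivalently is given by an explicit ratio of corner minors $\Delta_{n_{i-1}}\Delta_{n_{i+1}}/\ldots$ in the Toeplitz setting. This yields $q_{n_i}(u)>0$ and $\sigma^w_P(u)\ge 0$ for all $w$.

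For strict positivity I would argue by contradiction. Suppose $\sigma^w_P(u)=0$. Multiplying by a suitable class, positivity of the structure constants forces the point class, times a monomial in the $q$'s, to vanish at $u$. This is impossible, because the point class is invertible in the localised ring.

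\emph{Characterisation.} The inclusion $\subseteq$ is the positivity just established. For $\supseteq$, take $u\in X_P$ with all $\sigma^w_P(u)>0$. I need to show that every minor of $u$ is $\ge 0$. By Theorem~\ref{t:Peterson}, each ratio $\Delta^w_{n_i}/\Delta_{n_i}$ is positive, but this only controls special minors.

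My approach would be deformation and connectedness. Consider the set $Y=\{u\in X_P:\sigma^w_P(u)>0\ \forall w\}$. Show that $X_P(\R_{>0})$ is both open and closed in $Y$, and that $Y$ is connected. Closedness follows because nonnegativity of minors is a closed condition and $u\in Y$ keeps the $\Delta_{n_i}$ nonzero. Openness requires showing that minors cannot become negative without some $\sigma^w_P$ vanishing. This is where I would use the Toda/quantum-cohomology structure: every minor of a Toeplitz element, divided by $\Delta_{n_i}$, is a nonnegative combination of the basis \eqref{e:basis}, because the minors are matrix coefficients in fundamental representations.

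\emph{Homeomorphism.} Continuity of $\Delta_{\mathbf n}$ is clear. For bijectivity I would use the finite map $X_P\to(\C^*)^k$ given by the quantum parameters. By Theorem~\ref{t:Peterson}, $\C[X_P]$ is a free module of rank $|W^P|$ over $\C[q^{\pm1}]$. The positive real points over each $q\in\R^k_{>0}$ correspond to a Perron--Frobenius eigenvector: the operators of multiplication by $\sigma^{s_{n_i}}_P$ form a nonnegative irreducible family, so there is exactly one joint eigenvalue point with positive eigenvector. This identifies $X_P(\R_{>0})$ homeomorphically with $\R^k_{>0}$ via $q$. Composing with the monomial change of coordinates relating $q$ and $(\Delta_{n_i})$ then gives the result.

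\emph{Main obstacle.} I expect the hardest step to be the inclusion $\supseteq$ in \eqref{e:poscharToep} together with the uniqueness part of the Perron--Frobenius argument. These steps require showing that positivity of Schubert values forces all minors to be nonnegative. For this I would first try to exhibit each minor as a positive combination of the basis \eqref{e:basis} times $\Delta_{n_i}$.
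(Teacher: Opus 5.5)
The paper gives no proof of this theorem; it is quoted from \cite{rietsch2001flagvarieties}. Your argument therefore has to stand on its own, and it has a genuine gap at the characterisation \eqref{e:poscharToep}, on which everything else depends. Your Perron--Frobenius step can be completed. You need irreducibility of multiplication by the sum of divisor classes, which holds because quantum Chevalley terms lead back down to $\sigma^e_P$. You need that the simple Perron--Frobenius eigenline is a joint eigenline of the commuting multiplication operators, hence a character. You need continuity of the inverse, e.g.\ via properness of the finite map $q$, and invertibility of the exponent matrix in \eqref{e:qviadelta}. With these, it shows that $q$ maps the \emph{Schubert-positive} locus homeomorphically onto $\R^k_{>0}$. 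It says nothing about $X_P(\R_{>0})$ until \eqref{e:poscharToep} is known, and neither inclusion is proved.

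For $\subseteq$, minors give $\mathfrak S^w_P(u)\ge 0$ only for Grassmannian $w$, and your upgrade runs the implication backwards. If a product of Grassmannian classes equals $\sum_v c_v\sigma^v_P$ with all $c_v\ge 0$ and $c_w>0$, nonnegativity of the product gives no lower bound on $\sigma^w_P(u)$. You would need $\sigma^w_P$ itself to be a nonnegative expression in Grassmannian classes and quantum parameters. The same objection applies to $q_{n_i}$: in $\Fl_3$ one has $q_1=(\sigma^{s_1})^2-\sigma^{s_2s_1}$. Moreover \eqref{e:qviadelta} only gives $q_{n_i}^N>0$ with $N$ possibly even; use instead that the torus factor $d$ in $u=x_L\bar w_0\bar w_P\inv d x_R$ is positive.

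Your strict-positivity step is also wrong outside the Grassmannian case, because the point class need not be invertible in $qH^*(SL_n/P)[q\inv]$. For $SL_3/B$, quantum Monk gives $\sigma^{w_0}=\sigma^{s_1}\star\sigma^{s_1s_2}$, so $\mathfrak S^{w_0}=\frac{c_1}{c_2(c_1^2-c_2)}$. This vanishes at $u_3(0,1)\in X_B$, where $q_1=-1$ and $q_2=1$. The correct argument pushes a zero all the way down to $\sigma^e_P=1$ using irreducibility. That presupposes $\sigma^v_P(u)\ge 0$ for \emph{all} $v$, which is exactly what is missing.

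For $\supseteq$, everything rests on the claim that each minor of $u$ is a nonnegative combination of the basis \eqref{e:basis}. Saying ``because minors are matrix coefficients in fundamental representations'' is not a justification, and this claim is essentially the whole content of the inclusion (compare \cite{LamRietsch}, where the analogous positivity goes through geometric Satake). If you had it, your open/closed deformation argument would be superfluous, since nonnegativity of every minor would follow at once. Without it, openness is unproved. Connectedness of the Schubert-positive set is only available through the Perron--Frobenius parametrisation, which in turn needs $q(u)\in\R^k_{>0}$ for Schubert-positive $u$.

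That is not automatic either. Positivity of the top Grassmannian classes $1/\Delta_{n_i}$ plus \eqref{e:qviadelta} again only controls a power of $q_{n_i}$. The paper notes that \cite{LamRietsch} must assume positivity of the quantum parameters for the analogue of \eqref{e:poscharToep}. In type $A$ you would have to extract it from exact monomial relations such as $(\sigma^{s_2s_1})^2=q_1\sigma^{s_1s_2}$ in $qH^*(\Fl_3)$. As it stands, the proposal parametrises the Schubert-positive locus but does not identify it with $X_P(\R_{>0})$, which is the heart of the theorem.
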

 We note that the inverse map to \eqref{e:poschartToep}, recovering $u$ from its `parameters' $\Delta_1(u),\dotsc,\Delta_k(u)$, is not algebraic. It involves solving a set of polynomial equations that cannot be explicitly solved.
\begin{remark}\label{r:diforfullflag}  In the full flag variety case another useful version of the above parametrisation map describes it as a homeomorphism from $X_B(\R_{>0})$ to $T_{SL_n}(\R_{>0})$, the positive part of the maximal torus of $SL_n$. Namely,
\begin{equation}
    u\mapsto \begin{pmatrix}d_1(u)& & & &\\
    &d_2(u) & & &\\
    & &\ddots & &\\
    & & & d_{n-1}(u)&\\
    & & & &d_n(u)
    \end{pmatrix}:=\begin{pmatrix}\Delta_1(u)& & & &\\
    &\frac{\Delta_2(u)}{\Delta_1(u)} & & &\\
    & &\ddots & &\\
    & & & \frac{\Delta_{n-1}(u)}{\Delta_{n-2}(u)}&\\
    & & & &\frac{1}{\Delta_{n-1}(u)}
    \end{pmatrix}.
\end{equation}
See also \cite[Theorem~2.1]{rietsch2025totallypositivetoeplitzmatrices}.
\end{remark}
The interest in the $n\to\infty$ asymptotics of Theorem~\ref{t:totposGP} arose from a beautiful classical theory of total positivity for infinite  Toeplitz matrices dating back to the early 1950's. Namely, in the infinite setting there is a classical parametrisation result which looks quite different to the one above. It includes two infinite families of parameters as well as the parametrisation map going in the opposite direction: from parameters to Toeplitz matrices. This parametrisation theorem was originally conjectured by Schoenberg \cite{Schoenberg:48} with proof later given in \cite{ASW, AESW, Edrei52}. We refer to the two infinite  parameter sequences in this parametrisation theorem as the `Schoenberg parameters' and to the theorem itself as Edrei's theorem, for his key contribution given in \cite{Edrei52}.  Let us call a sequence $(c_0,c_{1},c_2,\dots)$ of nonnegative real numbers \textit{totally nonnegative} if the infinite upper-triangular Toeplitz matrix $(c_{{i-j}})$ is totally nonnegative.
 
 \begin{theorem}[Edrei's theorem]\label{theorem:edrei-infinite}
 A sequence $(1, c_{1},c_2,\dots)$ is totally nonnegative if and only if its generating function is of the form
  \begin{equation}
  \label{eq:inf-gen-func}
  1 + c_1x + c_2x^2 + \cdots = e^{\gamma x}\prod\limits_{i=1}^{\infty}\frac{1 + \beta_{i}x}{1-\alpha_i x},
  \end{equation}  for some $((\boldsymbol{\alpha},\boldsymbol{\beta}),\gamma)\in\Omega_S\times\R_{\ge 0}$ where $\Omega_S$ denotes the parameter space
\[
\Omega_{S}=\left\{(\boldsymbol{\alpha},\boldsymbol{\beta})\in\R_{\ge 0}^{\N}\times \R_{\ge 0}^\N\left|\begin{array}{l} \boldsymbol{\alpha}={(\alpha_i)}_{i\in\N} \text{ with } \alpha_{i}\ge \alpha_{i+1} \text{ and }\sum_{i}\alpha_i<\infty\\ \boldsymbol{\beta}={(\beta_j)}_{j\in\N} \text{ with }  \beta_{j}\ge \beta_{j+1} \text{ and }\sum_{j}\beta_j<\infty\end{array}\right.\right\}. 
\]
\end{theorem}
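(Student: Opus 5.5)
The statement has two directions of very different difficulty, and I would treat them separately.

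\emph{Sufficiency.} Suppose the generating function has the form \eqref{eq:inf-gen-func}. Total nonnegativity of $(c_{i-j})$ is equivalent to nonnegativity of all minors. By the Cauchy--Binet formula, the Toeplitz matrix of a product of generating functions is the product of the Toeplitz matrices. Hence the set of totally nonnegative sequences with $c_0=1$ is closed under multiplication of generating functions, and it suffices to check each elementary factor.
\begin{itemize}
\item The factor $1+\beta x$ gives a bidiagonal matrix. All its minors are either $0$ or products of $1$'s and $\beta$'s, so they are nonnegative.
\item The factor $1/(1-\alpha x)$ gives the matrix with entries $\alpha^{i-j}$ for $i\ge j$. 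Its minors are nonnegative either by a direct computation or because it is the inverse of a bidiagonal matrix with a sign twist.
\item The factor $e^{\gamma x}$ is the limit of $(1+\gamma x/N)^N$.
\end{itemize}
Total nonnegativity is a closed condition on the coefficients, and the coefficients of a convergent infinite product (using $\sum\alpha_i,\sum\beta_i<\infty$) are limits of the coefficients of finite products. So sufficiency follows by passing to the limit.

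\emph{Necessity.} This is Edrei's direction and the main obstacle. Let $f(x)=\sum c_kx^k$ with $(c_k)$ totally nonnegative. The plan has four steps.

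\begin{enumerate}
\item \textbf{Meromorphic continuation.} Total nonnegativity of the $2\times2$ minors gives $c_k^2\ge c_{k-1}c_{k+1}$, so $f$ has positive radius of convergence $1/\alpha_1$. Using the nonnegativity of the Hankel-type minors obtained as Toeplitz minors, I would show that $f$ extends meromorphically to $\C$. The poles lie only on the positive real axis, at points $1/\alpha_i$ with $\sum\alpha_i<\infty$.

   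The approach is to peel off poles one at a time. Show that $f(x)(1-\alpha_1x)$ is again the generating function of a totally nonnegative sequence, with strictly larger radius of convergence, and iterate. The key tool is the observation that the quotient $f/(1+\beta x)$ or product $f\cdot(1-\alpha x)$ stays totally nonnegative precisely when $\alpha$ or $\beta$ is extremal. This is proved by expressing the new minors as limits of ratios of old minors, in Edrei's style via Jacobi/Sylvester determinantal identities.

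2. \textbf{Passing to an entire function.} Dividing by $\prod 1/(1-\alpha_ix)$ leaves an entire function $g$ that is still totally nonnegative.

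3. \textbf{Growth and zeros of $g$.} Apply the same reasoning to $1/g(-x)$: its coefficients form a totally nonnegative sequence, since the inverse Toeplitz matrix with alternating signs is TN by Jacobi's complementary-minor formula. From this, $g$ has only negative real zeros $-1/\beta_j$ with $\sum\beta_j<\infty$.

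4. \textbf{Hadamard factorisation.} The hardest analytic point is to show that $g$ has order at most $1$ and minimal type, so that Hadamard factorisation leaves only $e^{\gamma x}$ with $\gamma\ge0$. I would first try Nevanlinna-theoretic growth estimates, as Edrei did, bounding $\log M(r)$ via the TN inequalities on both $g$ and $1/g(-x)$.
\end{enumerate}

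Since Theorem~\ref{theorem:edrei-infinite} is a classical result, in the paper I would ultimately cite \cite{ASW, AESW, Edrei52} for this direction rather than reprove it.
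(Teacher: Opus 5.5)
The paper gives no proof of this statement. It is quoted as classical background, with the proof attributed to \cite{ASW,AESW,Edrei52}. Your decision to cite those papers for necessity is therefore exactly what the paper does. Your sufficiency argument is correct and standard: closure of TN sequences under products via Cauchy--Binet, total nonnegativity of the factors $1+\beta x$ and $1/(1-\alpha x)$, the limit $(1+\gamma x/N)^N\to e^{\gamma x}$, and closedness of total nonnegativity under coefficientwise limits using $\sum\alpha_i,\sum\beta_i<\infty$.

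Your necessity outline, however, contains claims that are false as written.

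\emph{Step 4.} You aim to show that $g$ has order at most $1$ and \emph{minimal type}. But $g(x)=e^{\gamma x}\prod_j(1+\beta_jx)$ has $\log M(r,g)=\gamma r+o(r)$, since the product contributes only $o(r)$ when $\sum\beta_j<\infty$. So its type is exactly $\gamma$, and minimal type would force $\gamma=0$. That would exclude the exponential limits $e^{\eta x}$ of Proposition~\ref{p:exponential}. What is actually needed is only that the zero-free factor $e^{\phi}=g/\prod_j(1+\beta_jx)$, for which both $e^{\phi(x)}$ and $e^{-\phi(-x)}$ are TN, has order at most $1$. Hadamard then forces $\phi$ to be linear, and this is the step requiring Edrei's argument.

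\emph{Step 1, radius of convergence.} Multiplying by a single factor $(1-\alpha_1x)$ does not enlarge the radius of convergence when the pole is multiple, e.g.\ for $1/(1-\alpha x)^k$ as in Proposition~\ref{prop:lim-q-param2}. Termination of the peeling comes instead from the linear coefficient. Each $f\prod_{i\le N}(1-\alpha_ix)$ is TN, hence $c_1-\sum_{i\le N}\alpha_i\ge0$. This shows both that the singularity on the circle of convergence is a pole of finite order and that $\sum\alpha_i\le c_1$.

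\emph{Step 1, the peeling lemma.} It does not hold ``precisely when'' the pole is extremal: removing any pole, or dividing out any zero, preserves total nonnegativity. The extremal pole is simply the one you can locate from the radius of convergence. Moreover, the lemma itself --- that $(1-x/R)f(x)$ is again TN when $R<\infty$ is the radius of convergence --- is the substantive part of the classical argument. Your outline only names a technique for it, so it should not be presented as a proof.
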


Further normalised ($c_0=c_1=1$) infinite totally positive Toeplitz matrices and their Schoenberg parameters are classically related to irreducible normalised characters of the infinite symmetric group, by work of Thoma~\cite{Thoma} from the 1960's. Moreover, these characters are limits of normalised characters $\chi_{\lambda_n}/(\chi_{\lambda_n}(e))$ of $S_n$ as $n\to\infty$, and subsequent work of Vershik and Kerov~\cite{KEROVVERSHIK1981} from the 1980's interprets the $\alpha_i$ and $\beta_i$ as limits of  normalised arm and leg-lengths of the Young diagrams $\lambda_n$. It is interesting to observe the breadth of perspectives on Edrei's theorem, whose original proof belongs to the field of analysis and Nevalinna theory, but which has such clear representation theoretic significance. We note also that the theory of totally positive sequences, or Polya frequency sequences as they are sometimes referred to, is wide-ranging in itself. For example, it has a continuous analogue, and there are even reformulations of the Riemann hypothesis in terms of total positivity~\cite{Katkova,Grochenig2023}. We mention \cite{BorodinOlshanskiBook, Karlin} as further references concerning classical total positivity and Thoma's related theory. 

Our paper will build on the recently discovered relationship between Edrei's theorem and the parametrisation of finite totally positive Toeplitz matrices via quantum cohomology rings of full flag varieties $SL_{n}/B_{n}=\Fl_n(\C)$, that was given in \cite{rietsch2025totallypositivetoeplitzmatrices}, building on \cite{rietsch2025tropicaltoeplitzmatricesparametrisations}, and that goes as follows.

  \begin{theorem}\label{t:fullflag}\cite{rietsch2025totallypositivetoeplitzmatrices}
Consider a sequence $(u^{(n)})_n$ with $u^{(n)}\in X_{{B_n}}(\R_{>0})$. Suppose $u^{(n)}$ converges uniformly to an infinite totally positive Toeplitz matrix $u_\infty(\mathbf c)$ that has Schoenberg parameters 
$\alpha_1>\alpha_2>\dotsc >0$ and $\beta_1>\beta_2>\dotsc >0$. For $w\in S_\infty = \cup_{n\in \mathbb N}S_n$, consider $\mathfrak S_{B_n}^{w}\in \C[X_{B_{n}}]$ given by  $\mathfrak S_{B_n}^{w}:= \Phi_{B_n}(\sigma^w)$ using Theorem~\ref{t:Peterson}, whenever $n$ is large enough. Let $\hat{w}_{n}$ denote the  permutation that is conjugate to $w\in S_n$ by the longest element $w_0^{(n)}$ in $S_n$. Then we have,
\begin{eqnarray}
\lim_{n\to\infty}\mathfrak S_{B_n}^w(u^{(n)})&=&  S_w\left(\frac{1}{\alpha_1},\frac{1}{\alpha_2},\frac{1}{\alpha_3},\dotsc\right),\label{e:limSw}\\
\lim_{n\to\infty}\mathfrak S_{B_n}^{\hat{w}_{n}}(u^{(n)})&=&S_w\left(\frac{1}{\beta_1},\frac{1}{\beta_2},\frac{1}{\beta_3},\dotsc\right),\label{e:limSwn}
\end{eqnarray}
where $S_w$ is the classical Schubert polynomial 
associated to $w$. Moreover, for the $n$-th roots of the quantum parameters $q_i^{(n)}:=\Phi_{B_n}(q_i)$ we have
\begin{eqnarray}
\lim_{n\to\infty}\sqrt[n]{q^{(n)}_i(u^{(n)})}&=&\frac{\alpha_{i+1}}{\alpha_i}\label{e:limq1}\\
\lim_{n\to\infty}\sqrt[n]{q^{(n)}_{n-i}(u^{(n)})}&=&\frac{\beta_{i+1}}{\beta_i}
\label{e:limq2}
\end{eqnarray}
\end{theorem}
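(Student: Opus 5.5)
Theorem~\ref{t:fullflag} is quoted from \cite{rietsch2025totallypositivetoeplitzmatrices}, so here we only sketch how one would prove it. The first step is to make the functions $\mathfrak S^w_{B_n}$ and $q^{(n)}_i$ explicit in terms of the entries of $u^{(n)}$. By Theorem~\ref{t:Peterson}, for a Grassmannian permutation $w$ with descent $j$ we have $\mathfrak S^w_{B_n}=\Delta^w_j/\Delta_j$. Each $\Delta^w_j$ is a $j\times j$ minor of a lower-triangular Toeplitz matrix, hence a Jacobi--Trudi determinant in the $c_k$, that is, a skew-type Schur function in the formal alphabet whose complete homogeneous functions are $h_k=c_k$. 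The quantum parameters are Laurent monomials in the $\Delta_j$; in the conventions of Remark~\ref{r:diforfullflag}, $q_i=d_{i+1}/d_i$ up to the convention fixed in \cref{s:conventions}, i.e.\ $q_i=\Delta_{i-1}\Delta_{i+1}/\Delta_i^2$. Non-Grassmannian $w$ are handled by quantum Monk/Chevalley-type relations, or by expressing $\mathfrak S^w$ as a polynomial in the Grassmannian classes with coefficients polynomial in the $q$'s. The plan is to first prove \eqref{e:limSw} for Grassmannian $w$ and then deduce the general case from multiplicativity together with the fact that the $q_i^{(n)}$ tend to zero.

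\textbf{Asymptotics of corner minors.} The heart of the argument is the behaviour of $\Delta_j(u^{(n)})$, the lower-left $j\times j$ corner minor. For fixed $j$ this is $\det(c_{n-j+a-b})$, a Toeplitz minor built from the coefficients $c_m$ with $m\approx n$. For the limit $u_\infty$ with generating function $e^{\gamma x}\prod(1+\beta_ix)/(1-\alpha_ix)$, a Wiener--Hopf/Szeg\H{o}-type (Fisher--Hartwig) analysis of such minors gives
\[
\Delta_j\approx C_j\,(\alpha_1\cdots\alpha_j)^{n},
\]
because the $j$ largest poles dominate the $c_m$ for large $m$. This immediately yields $\sqrt[n]{q_i}\to\alpha_{i+1}/\alpha_i$, which is \eqref{e:limq1}.

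The ratios $\Delta^w_j/\Delta_j$ are then obtained by a residue computation. Replacing a few rows of the corner minor shifts the indices, and the leading term becomes a ratio of Schur-type expressions in the poles $1/\alpha_1,\dots,1/\alpha_j$. Via Lascoux--Sch\"utzenberger this is $S_w(1/\alpha_1,\dots)$; the Grassmannian Schubert polynomial is a Schur polynomial in $j$ variables, which is consistent with this.

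\textbf{Main obstacle.} We have only $u^{(n)}\to u_\infty$ entrywise, while the $c^{(n)}_m$ with $m\approx n$ are exactly the entries where uniform convergence gives no control. Here total positivity must be used. We would apply the finite parametrisation, Theorem~\ref{t:totposGP}, to write $u^{(n)}$ via its own finite Schoenberg-type factorisation: the polynomial $\sum_k c^{(n)}_k x^k$ has real roots by total positivity. The task is then to show that its roots converge to $-1/\beta_i$ and that the relevant poles converge to $1/\alpha_i$, for instance using Aissen--Schoenberg--Whitney and Hurwitz-type arguments. Once this is in place, the corner minors can be analysed using the roots of the finite polynomials, and the same asymptotics follow.

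The $\beta$-statements \eqref{e:limSwn} and \eqref{e:limq2} follow by symmetry. Conjugation by $w_0^{(n)}$ together with the transpose-inverse map on Toeplitz matrices swaps $\alpha\leftrightarrow\beta$: it sends the generating function $f(x)$ to $1/f(-x)$, and sends $q_i$ to $q_{n-i}$.
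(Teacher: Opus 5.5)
\textbf{The gap.} The step you flag as the main obstacle is where the proposal breaks down, and the route you suggest rests on a false claim. Total nonnegativity of the finite matrix $u^{(n)}=u_n(c^{(n)}_1,\dotsc,c^{(n)}_{n-1})$ does \emph{not} make $\sum_k c^{(n)}_kx^k$ real-rooted. By Aissen--Schoenberg--Whitney/Edrei, real-rootedness would require the \emph{infinite} Toeplitz matrix of $(1,c^{(n)}_1,\dotsc,c^{(n)}_{n-1},0,0,\dotsc)$ to be totally nonnegative, which is much stronger (compare Remark~\ref{r:Toepnnotinftp}). A counterexample already occurs for $n=3$ with $c_1=1$, $c_2=\tfrac12$. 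All minors of $u_3(c_1,c_2)$ are nonnegative; the only nontrivial one is $\Delta_2=c_1^2-c_2=\tfrac12$, and $\Delta_1=c_2=\tfrac12$. So $u_3(1,\tfrac12)\in X_{B_3}(\R_{>0})$, yet $1+x+\tfrac12x^2$ has non-real roots. Moreover, Theorem~\ref{t:totposGP} parametrises $X_{B_n}(\R_{>0})$ by the corner minors $\Delta_1,\dotsc,\Delta_{n-1}$, not by any roots-and-poles factorisation. A polynomial also has no poles. The $\alpha_i$ are visible only through the exponential growth of the coefficients $c^{(n)}_m$ with $m\approx n$, which is exactly what you need to control. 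So the argument is circular at its crux. Nothing in the proposal yields $\sqrt[n]{\Delta_j(u^{(n)})}\to\alpha_1\cdots\alpha_j$ for an arbitrary sequence converging to $u_\infty$.

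\textbf{What survives.} Your sketch does prove the statements for the particular sequence $u^{(n)}=[u_\infty]_n$, which lies in $X_{B_n}(\R_{>0})$. There, the partial fraction expansion $c_k=\sum_{i\le M}A_i\alpha_i^k+O((\alpha_{M+1}+\epsilon)^k)$ of the meromorphic generating function, combined with Cauchy--Binet, gives $\Delta_j\sim C_j(\alpha_1\cdots\alpha_j)^{n}$ and $\Delta^w_j/\Delta_j\to s_\lambda(\alpha_1^{-1},\dotsc,\alpha_j^{-1})$. This is a Hadamard-type statement about fixed-size minors with shifted indices, not Szeg\H{o}/Fisher--Hartwig, which concerns determinants of growing size. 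Your other two ingredients are fine in outline: the reduction of non-Grassmannian $w$ via $n$-stable quantum Schubert polynomials together with $q^{(n)}_i\to0$, and the $\alpha\leftrightarrow\beta$ symmetry via $u\mapsto\bar w_0^{-1}(u^{-1})^T\bar w_0$ (Lemma~\ref{l:lambda'}).

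However, the real content of the theorem is the transfer from $[u_\infty]_n$ to an arbitrary sequence converging to $u_\infty$. That transfer must combine the convergence hypothesis with genuine positivity input, because absolute closeness of entries cannot control $n$-th roots of minors that involve heavy cancellation. The present paper shows this is nontrivial: when it applies Theorem~\ref{t:fflag-d-lim} to $[u^\infty]_n$ in Proposition~\ref{p:pflag-q-lim-inf}, it must impose an extra hypothesis on the $\Delta$'s to pass to other sequences. Note also that the paper does not prove Theorem~\ref{t:fullflag} itself; it is quoted from \cite{rietsch2025totallypositivetoeplitzmatrices}.
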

The main goal of the paper is to determine  analogous asymptotical formulas in the Grassmannian setting. The tools in this case are quite different from the ones used in \cite{rietsch2025totallypositivetoeplitzmatrices}, since for Grassmannians we are able to employ rather explicit descriptions of the Peterson Toeplitz variety. However,  \cite{rietsch2025totallypositivetoeplitzmatrices}  was critically involved in the process of choosing the correct conventions, that we expect should allow  generalisation of our results to partial flag varieties more generally. An additional result concerns the strange duality automorphism. This is an involution of (localised) Grassmannian quantum cohomology that was constructed independently by Postnikov and Hengelbrock. Our main results in this paper can be summarised as follows. 
\begin{enumerate}
    \item After careful setting up of conventions we provide analogues of Theorem~\ref{t:fullflag} for  $Gr(n-k,n)$ and  $Gr(k,n)$ and prove associated asymptotic formulas for Schubert classes in two different ways. 
    \item For the Peterson-Toeplitz variety  $X_{P_n}^{(2n)}$ of $Gr(n,2n)$, we prove that the asymptotic limiting set of $X_{P_n}^{(2n)}(\R_{>0})$, as $n\to\infty$, corresponds precisely to the set of exponential generating functions $\{e^{\gamma x}\mid \gamma\in R_{\ge 0}\}$ in terms of Edrei's theorem.
    \item We give a new interpretation of the   strange duality automorphism of $qH^*(Gr(k,n),\C)[q\inv]$ as part of Peterson theory. This interpretation is used in one of the proofs from (1) but may be of independent interest. 
    \item We give conjectural analogues of our results for more general partial flag varieties, and prove a result concerning the asymptotics of the quantum parameters in such a setting. 
\end{enumerate}

\subsection{Structure of the paper} This paper is structured as follows. Section~\ref{s:conventions} establishes background and conventions for the chapters that follow and constructs a useful equivalence between two different versions of Peterson's isomorphisms. Section~\ref{sec:preliminaries} discusses the theory of quantum cohomology for Grassmannians, and specifically its relationship with total positivity. In Section~\ref{sec:asymptquantparam}, we explore the asymptotic behaviour of the totally positive part of the Peterson Toeplitz variety of the Grassmannian for $Gr(n-m,n)$ as $n\to\infty$ (with $m$ fixed). Then in  Section~\ref{sec:asymptquantparam2} we consider the case of $Gr(k,n)$ with $k$ fixed, and in Section~\ref{sec:gr2n} the case of $Gr(n,2n)$. In Section~\ref{s:strange} we present a new perspective on the `strange duality' of Grassmannian quantum cohomology, and use it to give an alternative proof of the asymptotics of Schubert classes from Sections~\ref{sec:asymptquantparam} and \ref{sec:asymptquantparam2}. Finally, in Section~\ref{sec:conjectures}, we prove some partial results and formulate conjectures concerning the generalisation to partial flag varieties $SL_n/P$.

\section{Background and conventions}\label{s:conventions}
There are many different conventions concerning the Peterson presentation of quantum cohomology rings. In \cite{rietsch2025totallypositivetoeplitzmatrices} we chose conventions for the full flag variety $\Fl_n$ and its associated Peterson Toeplitz variety that lent themselves  particularly well to taking the infinite limit. We extend these conventions to partial flag varieties here, with the only change being that everything is transposed, so that our Toeplitz matrices are lower-triangular while the ones in \cite{rietsch2025totallypositivetoeplitzmatrices} were upper-triangular. Our choices thereby become closer to \cite{Kostant:qcoh,rietsch2001flagvarieties}, where it is also lower-triangular Toeplitz matrices that appear, in line with \cite{LRY:Schubert}. 

\subsection{Notations associated to $SL_n$ and the variety $X_P$} Let $G=SL_n$, and write $\mathfrak n_-$ for  the Lie algebra of $U_-$ and $\mathfrak u_+$ for the Lie algebra of $U_+$. Let  $F=\sum_{i\in I} F_i$ be the sum of the duals, $F_i=e_i^*$, of the positive Chevalley generators $e_i\in\mathfrak u_+$. We may call $F$ the standard principal nilpotent, noting that it lies in the dual,  $\mathfrak g^*$, of the Lie algebra $\mathfrak g$. Let us consider the one-parameter subgroups $x_i:\C\to U_+$ and $y_i:\C\to U_-$ given by 
\[
x_i(t)=\exp(t e_i), \quad y_i(t)=\exp(t f_i),\qquad t\in\C, i\in I,
\]
where the $f_i\in\mathfrak u_-$ are the negative Chevalley generators. Let $\Pi=\{\alpha_i\mid i\in I \}$ denote the set of simple roots (note that $\alpha_i$ is also used throughout this paper to denote the Schoenberg parameters of a Toeplitz matrix, but it should be clear from context which $\alpha_i$ is being used). As before, the Weyl group $W=S_n$ and is generated by the simple reflections. For any $w\in W$ we choose the representative $\bar w\in N_G(T)$ determined by 
\begin{equation}\label{e:wbar}
\bar s_i:=y_i(1)x_i(-1)y_i(1)\qquad \text{and}\qquad\bar w:=\bar s_{i_1}\dotsc \bar s_{i_n}, 
\end{equation}
for a/any reduced expression $w=s_{i_1}\dotsc s_{i_n}$ of $w$. We also consider another representative $\dot w:=\dot s_{i_1}\dot s_{i_2}\dotsc \dot s_{i_n}$ where $\dot s_i:=\bar s_i\inv$. For any parabolic subgroup $P\supset B$ we have the associated subgroup $W_P=\langle s_i\mid i\in I_P\rangle$ and the set  $W^P$ of  minimal coset representatives for $W/W_P$. The longest element in $W $ is denoted $w_0$ and the longest element in $W_P$ is $w_P$. 

Recall that $X=U_-^F$ is the stabilizer subgroup for $F$, and the  Peterson Toeplitz variety associated to a parabolic $P$ in our conventions is
\begin{equation}\label{e:PetersonToeplitzConv}
X_P=X\cap B_+w_0w_PB_+=\{u\in U_-\cap B_+ w_0w_PB_+\mid u\cdot F=F\}.
\end{equation}
If $P=B$ then $X_B=X\cap B_+ w_0 B_+$ consists of the unipotent lower-triangular Toeplitz matrices $u$ for which the lower left-hand corner $i\times i$-minor $\Delta_i(u)$ is nonzero for all $i\in I=\{1,\dotsc, n-1\}$.

\subsection{Grassmannian permutations and associated functions}\label{s:YoungDiag}
Recall that $w\in S_n$ is called a Grassmannian permutation of descent $k$ if $k\in[n]$ is unique with $w(k)>w(k+1)$. Equivalently, $w$ is Grassmannian of descent $k$ if it lies in  $W^{P_k}$ for the maximal parabolic subgroup $P_k$ with $I_{P_k}=I\setminus \{k\}$. It then also lies in $W^P$ for any parabolic subgroup $P\subseteq P_k$. 
\begin{definition}\label{d:wminornew}  For a Grassmannian permutation $w$, let $\Delta^w_k$ denote the $k\times k$ minor with column set $\{w(1),\dotsc,w(k)\}$ and row set $[k]+n-k=\{n-k+1,\dotsc,n-1, n\}$. 
\end{definition}
We may use the standard combinatorics of partitions $\lambda$, specifically $\lambda=(\lambda_1,\dotsc,\lambda_k)$ with $\lambda_1\le n-k$, to describe the Grassmannian permutations $w\in W^{P_k}$ and their associated minors more conveniently. Let us consider $\lambda$ as identified with its Young diagram (that we also denote $\lambda$). We write $\lambda'$ for the conjugate partition, where $\lambda'_i$ is the length of the $i$-th column of $\lambda$. Suppose that $\lambda$ is fitted inside a bounding $k\times (n-k)$ rectangle. Then the associated Grassmannian permutation, denoted $w_\lambda$ or $w_\lambda^{(k)}$, is the permutation of $[n]$ given by
\begin{equation}\label{e:wlambda}
w_\lambda=
\begin{pmatrix}
1 &2& \dotsc & k & k+1&k+2&\dotsc &n \\
\lambda_k+1&\lambda_{k-1}+2&\dotsc &\lambda_1+k& k-\lambda'_1+1&k-\lambda'_2+2&\dotsc& n-\lambda'_{n-k}
\end{pmatrix}. 
\end{equation}
Equivalently, we may consider the SE border of $\lambda$ inside its bounding rectangle as an $n$-step path from the SW corner to the NE corner consisting of a sequence of northward and eastward steps. Let $i_1<i_2<\dotsc < i_k$ index the northward steps, and $i_{k+1}<i_{k+2}<\dotsc < i_{n}$ the eastward steps of this path. Then the permutation $w_\lambda$ has the property that $w_\lambda(\ell)=i_\ell$ for $\ell\in [n]$. Note that $w_0 w^{(k)}_\lambda w_0\inv=w^{(m)}_{\lambda'}$, where $m=n-k$, and the superscript is keeping track of the descent.

We can now describe the $k\times k$-minor $\Delta_k^\lambda:=\Delta_k^{w_\lambda}$ explicitly in terms of the Young diagram $\lambda$ by
\begin{equation}\label{e:Deltalambda}\Delta_k^\lambda(u)=\Delta_k^{w_\lambda}(u)=\langle  u \bar w_\lambda\cdot v_{1}\wedge \cdots \wedge v_k,v_{n-k+1}\wedge \cdots \wedge v_{n} \rangle=\langle u \cdot  v_{i_1}\wedge v_{i_2}\wedge \cdots \wedge v_{i_k} ,v_{n-k+1}\wedge \cdots \wedge v_{n}\rangle,
\end{equation}
where $i_j=\lambda_{k-j+1}+j$. Here $\{v_i\}$ is the standard basis of $\C^n$ and the formula is expressing $\Delta_k^\lambda(u)$ as a matrix coefficient in the fundamental representation $V_{\omega_k}=\bigwedge^k\C^n$.  
\begin{equation}\label{e:Deltakw}
\Delta_k^{w_\lambda}(u)=\langle  u \bar w_\lambda\cdot v_{1}\wedge \cdots \wedge v_k,v_{n-k+1}\wedge \cdots \wedge v_{n} \rangle.
\end{equation}
\begin{definition}\label{d:Swlambda}For $w$ a Grassmannian permutation of descent $k$ define a rational function $\mathfrak{S}^{w_\lambda}\in \C(X)$ by
\[
\mathfrak{S}^{w_\lambda}(u)\coloneq \frac{\Delta_k^{w_\lambda}( u)}{\Delta_k(u)}.
\]
Note that $\mathfrak S^{w_\lambda}$ is regular on $X_P$ if and only if $k\in I^P$, or equivalently, if the Grassmannian permutation $w_\lambda$ lies in $W^P$. In particular each of these  $\mathfrak S^{w_\lambda}$ is regular on $X_B$ (which is where all the $\Delta_k$ are nonvanishing). We write 
\begin{equation}\label{e:SwP}
\mathfrak S^{w_\lambda}_P:=\mathfrak S^{w_\lambda}|_{X_P}\quad \text{if $ w_\lambda\in W^P$}
\end{equation} to obtain a regular function $\mathfrak S^w_P\in \C[X_P]$ for any such Grassmannian permutation. 
While $\mathfrak S^w_P$ will always be a function on $X_P$, we may use the notation $\Delta^w_k$ to refer to the $k\times k$ minor associated to $w\in W^{P_k}$  more generally, as a function on $SL_n$. 
\end{definition}
\begin{remark}\label{r:transpose}
We have $\Delta_k=\Delta_k^e$ for the identity element  $e\in W$, and therefore $\mathfrak S^e(u)=1$.  Moreover, if $w=s_k$ then $\mathfrak S^{s_k}(u)=\Delta^{s_k}_k(u)/{\Delta_k(u)}$ where the numerator $\Delta^{s_k}_k(u)$ is the minor of $u$ with column set $[k-1]\cup\{k+1\}$ and row set $[k]+n-k$. Note that in terms of the transpose $u_+:=u^T$ this numerator would be the minor with row set $[k-1]\cup\{k+1\}$ and column set  $[k]+n-k$, consistent with \cite[(2.5)]{rietsch2025totallypositivetoeplitzmatrices}.
\end{remark}

 \subsection{{The Peterson variety $Y_P$ and $qH^*(SL_n/P)$}} We recall Dale Peterson's construction of the quantum cohomology rings of the partial flag varieties $SL_n/P$, see  \cite{peterson,rietsch2001flagvarieties,Kostant:qcoh}. First, we define the full Peterson variety. It has two equivalent descriptions,
\begin{equation}\label{e:Y}
Y:=\{gB_-\mid (g\inv\cdot F)|_{[\mathfrak n_-,\mathfrak n_-]}=0\}=\overline{U^F_-w_0B_-/B_-.}%=\overline{\{uw_0B_-\mid u\in U^F_-\}}.
\end{equation}
Here again $F=\sum_{i=1}^{n-1}e_i^*$, and the action of $G$ in the definition is by the coadjoint representation. The variety $Y$ is an $(n-1)$-dimensional irreducible subvariety of the flag variety, and by the second description can be considered as a compactification of $X=U_-^F$.  
We also set $Y^\circ:=Y\cap B_-w_0B_-$, and to any parabolic subgroup $P\supseteq B $ we associate 
\begin{equation}\label{e:Petersonstrata}
Y_P:=
Y\cap B_+w_PB_-/B_-\qquad \text{and}\qquad Y_P^\circ:=Y_P\cap B_-w_0B_-/B_-.
\end{equation}
In general type, the definition of the (full) Peterson variety $Y$ is analogous, but with $Y$ lying in the Langlands dual flag variety. Note that therefore one would usually consider $PSL_n$ rather than $SL_n$ when making the constructions in type $A$, but since this has no effect on the flag variety or the Peterson Toeplitz variety, we will work with $SL_n$ throughout. 

\begin{definition}\label{d:Gwlambda}
    Suppose $w_\lambda$ is a Grassmannian permutation of descent $k$. We let $G^{w_\lambda}$  denote the rational function on $G/B_-$ defined by 
\begin{equation}\label{e:Gwlambda}
G^{w_\lambda}(gB_-):=\frac{\langle g\cdot v_{k+1}\wedge\dotsc\wedge v_{n},\dot w_\lambda\cdot v_{k+1}\wedge\dotsc \wedge v_{n}\rangle}{\langle g\cdot v_{k+1}\wedge\dotsc\wedge v_{n}, v_{k+1}\wedge\dotsc \wedge v_{n}\rangle},
\end{equation}
and $G^{w_\lambda}_Y$ its restriction to the Peterson variety $Y$. If $P$ is a parabolic subgroup with $k\in I^P$, so that $w_\lambda\in W^P$, then the restriction of $G^w$ to $Y_P$ is a regular function and we denote it by $G^{w_\lambda}_P$, or possibly $G^{\lambda}_P$ when the descent $k$ is fixed.
\end{definition}
We can now state Peterson's theorem from \cite{peterson}. 
\begin{theorem}[{\cite{peterson,rietsch2001flagvarieties,rietsch2001err}}]\label{t:PetersonY} 
The full Peterson variety is comprised of the strata from \eqref{e:Petersonstrata}, that is, $Y(\C)=\bigsqcup_P Y_P(\C)$.
Moreover, we have compatible isomorphisms,
\begin{equation}\label{e:PetIsoP}qH^*(SL_n/P)\cong \C[Y_P] \qquad \text{and}\qquad qH^*(G/P)[q_{n_1}\inv,\dotsc, q_{n_k}\inv]\cong \C[Y_P^\circ],
\end{equation}
determined by $\sigma^{w_\lambda}_P\mapsto G^{w}_P$ and $\sigma^{w_\lambda}_P\mapsto G^{w}_P|_{Y_P^\circ}$, respectively, 
where $w_\lambda\in W^{P_k}$ with $k\in I^P$. 
\end{theorem}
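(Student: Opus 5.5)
The plan is to deduce this statement from Peterson's theorem in the form proved in \cite{rietsch2001flagvarieties,rietsch2001err}. I will not reprove it from scratch. The work is to build an explicit automorphism of $SL_n$ carrying the conventions there to the ones fixed in \cref{s:conventions}.

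\textbf{Step 1: the automorphism.} In \cite{rietsch2001flagvarieties} the Peterson variety is cut out by a principal nilpotent built from the other set of Chevalley generators, and it lives in $G/B_+$ rather than $G/B_-$. I will take $\theta:SL_n\to SL_n$ to be the composition of transpose-inverse (the Chevalley involution, which swaps $e_i\leftrightarrow -f_i$ up to sign) with conjugation by a representative of $w_0$, or possibly just one of these factors, whichever matches the conventions. I then verify four things for $\theta$:
\begin{enumerate}
\item it preserves the torus $T$;
\item it interchanges $B_+$ and $B_-$, or fixes each, as required;
\item it sends the nilpotent used in \cite{rietsch2001flagvarieties} to $F=\sum_i e_i^*$, up to the action of $T$, which can be absorbed by rescaling;
\item it maps $U_-^F$ to the stabilizer used there.
\end{enumerate}
Under these checks, $\theta$ induces an isomorphism between the two versions of the flag variety. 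It carries the other-convention Peterson variety onto $Y$ as defined in \eqref{e:Y}; both descriptions are preserved, since $\theta$ intertwines the coadjoint actions.

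\textbf{Step 2: strata and the decomposition.} The stratification $Y=\bigsqcup_P Y_P$ is a statement about Bruhat cells. The Bruhat decomposition $G/B_-=\bigsqcup_w B_+wB_-/B_-$ is carried by $\theta$ to the corresponding decomposition in the other convention, with $w\mapsto w_0ww_0$ or $w\mapsto w$. I will check that the parabolics $W_P$ correspond, possibly via the diagram automorphism $i\mapsto n-i$, combined with relabelling $P$ accordingly. Then the known fact that $Y$ only meets the cells indexed by the $w_P$ transfers to give $Y(\C)=\bigsqcup_P Y_P(\C)$. The open parts $Y_P^\circ=Y_P\cap B_-w_0B_-/B_-$ correspond as well, since $\theta$ preserves $B_-w_0B_-$ up to the same relabelling.

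\textbf{Step 3: matching Schubert classes.} In \cite{rietsch2001flagvarieties} the class $\sigma^w_P$ goes to a ratio of Plücker coordinates $\langle g\cdot v_{\omega},\dot w\cdot v_\omega\rangle/\langle g\cdot v_\omega,v_\omega\rangle$ for the relevant fundamental representation. I will pull these ratios back along $\theta$. In the representation $\bigwedge^{n-k}\C^n$ with lowest/highest vector $v_{k+1}\wedge\cdots\wedge v_n$, they become exactly the functions $G^{w_\lambda}$ of Definition~\ref{d:Gwlambda}. Under any diagram-automorphism relabelling, Grassmannian permutations of descent $k$ must be sent to the correct Grassmannian permutations, so that the indexing $\sigma^{w_\lambda}_P\mapsto G^{w_\lambda}_P$ is the one stated.

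Since the Grassmannian Schubert classes $\sigma^{w_\lambda}_P$ with $k\in I^P$, together with the $q$'s, generate $qH^*(SL_n/P)$ as a ring, the isomorphism is determined by these images. Compatibility of the two isomorphisms in \eqref{e:PetIsoP} then follows by restriction to the open subset $Y_P^\circ$. There the quantum parameters become invertible, because each $q_{n_i}$ maps to a ratio of minors that is nonvanishing on $B_-w_0B_-$.

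\textbf{Main obstacle.} I expect the main difficulty to be the bookkeeping in Step~3. The choices $\dot w$ versus $\bar w$, signs from transpose-inverse acting on $\bigwedge^{n-k}\C^n$, and the complement indexing $v_{k+1}\wedge\cdots\wedge v_n$ must all combine to give exactly $G^{w_\lambda}$ rather than a sign-twisted version or the function attached to $w_0w_\lambda w_0$. I would first test this for $w=s_k$, where both sides are a single explicit minor, and fix the choice of $\theta$ and of the representative of $w_0$ there. Positivity considerations (both sides positive on the totally positive part) then pin down any remaining signs.
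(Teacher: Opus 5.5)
You are taking essentially the paper's route, but your translation step rests on a wrong premise. The paper gives no proof of this theorem: it quotes it from \cite{peterson,rietsch2001flagvarieties,rietsch2001err} and calls it the \emph{original} version of Peterson's theorem (see the opening of the comparison subsection and the diagram in \cref{r:iota}). In other words it is already in the conventions of those references, where points of $Y_P^\circ$ are written $yw_0B_-\in G/B_-$ with $y=u_1t\dot w_P\dot w_0\inv u_2$ (\cref{r:qcomparison}). Your reduction to the literature therefore matches the paper, but the translating automorphism should simply be $\theta=\mathrm{id}$. Your assumption that \cite{rietsch2001flagvarieties} works in $G/B_+$ with the other Chevalley generators is not what the paper indicates, so Steps 1--3 are vacuous.

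Your leading candidate $\theta(g)=\bar w_0\inv(g\inv)^T\bar w_0$ is in fact not a change of conventions at all. It fixes $B_+$, $B_-$ and $\bar w_0$, and it agrees with $\iota$ on $X$ (\cref{r:iotaequiv}), so it preserves $Y$. However, it sends $Y_P$ to $Y_{P'}$ with $W_{P'}=w_0W_Pw_0$, and it pulls $G^{w_\lambda}$ back to $\pm G^{w_{\lambda'}}$ of descent $n-k$ (the computation in \cref{l:lambda'}). It is the duality symmetry $SL_n/P\cong SL_n/P'$ of the statement itself (compare \cref{l:symm}). Using it would only re-derive the theorem from itself plus that duality.

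The bookkeeping you single out as the main obstacle ($\bar w$ versus $\dot w$, the sign $(-1)^{|\lambda|}$) does genuinely arise, but in a different place. It belongs to the passage from this theorem to \cref{t:Peterson}, which the paper carries out in \cref{p:comparison}. There the sign coming from $u\mapsto u\inv$ is removed by using the anti-automorphism $\iota$ instead. The justification is a homogeneity argument on a Lusztig torus in $U_-$, not positivity.

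There is also a gap in your deduction of the localized isomorphism from the first one, if you intend to derive it rather than cite it. Invertibility of the $q_{n_i}$ on $Y_P^\circ$ only gives a homomorphism $qH^*(SL_n/P)[q_{n_1}\inv,\dotsc,q_{n_k}\inv]\to\C[Y_P^\circ]$. For this to be an isomorphism you also need $Y_P^\circ$ to be exactly the locus in $Y_P$ where all $q_{n_i}$ are nonzero, so that $\C[Y_P^\circ]=\C[Y_P][q_{n_1}\inv,\dotsc,q_{n_k}\inv]$. That reverse inclusion is part of the cited theorem, and your argument does not supply it.
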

\subsection{A comparison result} We now explicitly describe the change of conventions that relate the version of Peterson's theorem from the introduction (Theorem~\ref{t:Peterson}) to the original version (Theorem~\ref{t:PetersonY}).
\begin{definition}\label{d:iota} Let $\iota$ be the positivity-preserving involutive anti-automorphism of  $G=SL_n$ defined by
\[
\iota(x_i(t))=x_i(t),\quad \iota(y_i(t))=y_i(t),\quad \iota(d)=d\inv,
\]
where $d\in T$ is a diagonal matrix, see \cite[(1.7)]{BerensteinZelevinsky}. 
\end{definition}
\begin{proposition}\label{p:comparison}
The map $\mu: U_- \to B_-w_0B_-/B_-$ defined by $
    u \mapsto \iota(u) w_0B_-$ has the property that
\begin{equation}\label{e:mustarformula}
    \mu^*(G^{w_\lambda})=\frac{\Delta^{w_\lambda}_k}{\Delta^e_k}
\end{equation}
for any Grassmannian permutation $w_\lambda=w_\lambda^{(k)}$ in $S_n$. Moreover, the restriction of $\mu$ to the Peterson Toeplitz variety $X_P$ defines an isomorphism $\mu_P: X_P\to Y_P^\circ$ with the property  that $\mu_P^*(G_P^{w_\lambda})=\mathfrak S_P^{w_\lambda}$ whenever $w_\lambda\in W^P$.
\end{proposition}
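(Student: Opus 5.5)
The plan is to first make $\iota$ explicit as a matrix map, then compute $\mu^*(G^{w_\lambda})$ with Jacobi's complementary minor identity, and finally check that $\mu$ matches the strata $X_P$ and $Y_P^\circ$.

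\textbf{Step 1: $\iota$ as a matrix map.} Let $\varepsilon=\operatorname{diag}(1,-1,1,\dotsc,\pm1)$. I claim $\iota(g)=\varepsilon\, g\inv\,\varepsilon$. The right-hand side is an involutive anti-automorphism of $SL_n$. Inversion sends $x_i(t)\mapsto x_i(-t)$, $y_i(t)\mapsto y_i(-t)$ and $d\mapsto d\inv$. Conjugation by $\varepsilon$ flips the sign of every entry on the first super- and subdiagonal and fixes $T$. So the generators transform as in Definition~\ref{d:iota}, and since they generate $SL_n$ the two maps agree.

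\textbf{Step 2: the formula \eqref{e:mustarformula}.} Substituting into \eqref{e:Gwlambda} gives
\[
\mu^*(G^{w_\lambda})(u)=\frac{\langle \varepsilon u\inv\varepsilon\, w_0\cdot v_{k+1}\wedge\dotsm\wedge v_n,\ \dot w_\lambda\cdot v_{k+1}\wedge\dotsm\wedge v_n\rangle}{\langle \varepsilon u\inv\varepsilon\, w_0\cdot v_{k+1}\wedge\dotsm\wedge v_n,\ v_{k+1}\wedge\dotsm\wedge v_n\rangle}.
\]
I will evaluate this in four moves.
\begin{enumerate}
\item The representative of $w_0$ sends $v_{k+1}\wedge\dotsm\wedge v_n$ to $\pm v_1\wedge\dotsm\wedge v_{n-k}$.
\item The vector $\dot w_\lambda\cdot v_{k+1}\wedge\dotsm\wedge v_n$ is $\pm$ the wedge of the $v_{w_\lambda(j)}$ for $j>k$, i.e.\ of the $v_{i_j}$ over the eastward steps of $\lambda$.
\item Each pairing is therefore $\pm$ an $(n-k)\times(n-k)$ minor of $u\inv$ with column set $[n-k]$. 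Jacobi's identity (with $\det u=1$) turns it into $\pm$ the complementary $k\times k$ minor of $u$. That minor has row set $[k]+n-k$ and column set either $\{w_\lambda(1),\dotsc,w_\lambda(k)\}$ (numerator) or $[k]$ (denominator). These are exactly $\Delta^{w_\lambda}_k(u)$ and $\Delta^e_k(u)$.
\item The $\varepsilon$-conjugation only multiplies each minor by a sign.
\end{enumerate}
So the ratio equals $\pm\Delta^{w_\lambda}_k/\Delta^e_k$. I expect the main obstacle to be showing the total sign is $+1$: the signs come from $\varepsilon$, from Jacobi, from the representative of $w_0$, and from $\dot w_\lambda$ versus $\bar w_\lambda$. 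I would first try to collect them using $\dot s_i=\bar s_i\inv$ and the inversion-count parity of $w_\lambda$. If that becomes messy, a fallback is to check the sign at one totally positive $u$: both sides are positive there, since $\iota$ preserves positivity and the $G^{w}$ are positive on the totally positive part of $Y$.

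\textbf{Step 3: the isomorphism $\mu_P$.} Since $\iota$ fixes each $y_i(t)$ and is an anti-automorphism, it preserves $U_-$. It also preserves the stabiliser $U_-^F$: its differential fixes the $f_i$ and so preserves the sum $F$, compatibly with the coadjoint action. By the second description in \eqref{e:Y}, $\mu(X)\subset Y\cap B_-w_0B_-/B_-=Y^\circ$. The map $u\mapsto uw_0B_-$ is an isomorphism $U_-\cong B_-w_0B_-/B_-$, and $Y^\circ=U_-^Fw_0B_-/B_-$, so $\mu:X\to Y^\circ$ is an isomorphism.

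For the strata, $\iota(B_+wB_+)=B_+w\inv B_+$. Hence for $u\in U_-$ we have $u\in B_+w_0w_PB_+$ if and only if $\iota(u)\in B_+w_Pw_0B_+$, if and only if $\iota(u)w_0\in B_+w_PB_-$. Therefore $\mu(X_P)=Y_P^\circ$. Restricting the identity \eqref{e:mustarformula} to $X_P$ then gives $\mu_P^*(G^{w_\lambda}_P)=\mathfrak S^{w_\lambda}_P$.
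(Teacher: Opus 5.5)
Your proposal is correct. Its skeleton matches the paper's: the matrix formula $\iota(g)=\varepsilon g\inv\varepsilon$, complementary minors, and the Bruhat-cell argument $\iota(B_+wB_+)=B_+w\inv B_+$ for the strata. Your strata argument is in fact a cleaner version of the paper's, which has some $B_\pm$ typos. Where you differ is in the handling of the sign.

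The paper uses Lemma~\ref{l:lambda'} (Jacobi in the guise of $\bigwedge^{n-k}\C^n\cong(\bigwedge^k\C^n)^*$) to get $\Delta^{w_\lambda}_k/\Delta_k=(-1)^{|\lambda|}G^{w_\lambda}(u\inv\dot w_0B_-)$, tracking the sign exactly. It then compares $u\inv$ with $\iota(u)$ by a homogeneity argument on a Lusztig torus, using $y(\mathbf m)\inv=\iota(y(-\mathbf m))$; this produces a second factor $(-1)^{|\lambda|}$.

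You instead insert $\iota(u)=\varepsilon u\inv\varepsilon$ directly into the pairing. That turns the comparison into a diagonal-sign computation and removes the need for the Lusztig torus. The cost is that you left the overall sign open, but your first route closes it at once:
\begin{itemize}
\item Set $R_\lambda=\{w_\lambda(k+1),\dotsc,w_\lambda(n)\}$ and $R_\emptyset=\{k+1,\dotsc,n\}$. Relative to the denominator, the $\varepsilon$-factor and the Jacobi factor of the numerator both equal $(-1)^{\sum R_\lambda-\sum R_\emptyset}=(-1)^{|\lambda|}$, since $w_\lambda(k+j)=k+j-\lambda'_j$. So they cancel.
\item The sign coming from $\dot w_0$ cancels in the ratio.
\item $\dot w_\lambda\cdot v_{k+1}\wedge\dotsm\wedge v_n=+\,v_{w_\lambda(k+1)}\wedge\dotsm\wedge v_{w_\lambda(n)}$. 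Along a reduced word of $w_\lambda\in W^{P_k}$, each $\dot s_i$ acts on a wedge containing $v_{i+1}$ but not $v_i$, and $\dot s_i v_{i+1}=v_i$.
\end{itemize}

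Your positivity fallback is also valid, since the sign is a global constant. As phrased, though, it must be run at some $u\in X_B(\R_{>0})$: for a general totally positive $u\in U_-$, the point $\mu(u)$ does not lie in $Y$. It also imports the positivity theorem of \cite{rietsch2001flagvarieties}, which the elementary bookkeeping avoids.

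Finally, your justification that $\iota$ preserves $U_-^F$ is loose, because $F=\sum e_i^*$ lies in $\mathfrak g^*$ rather than being the sum of the $f_i$. It is cleaner to note that $\varepsilon u\inv\varepsilon$ is Toeplitz whenever $u$ is; the paper instead uses $d_\epsilon\cdot F=-F$.
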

\begin{remark}\label{r:iota} This proposition implies that the isomorphism $\mu_P$ between the Peterson Toeplitz variety $X_P$ and the Peterson variety $Y^\circ_P$ induces a map of the coordinate rings that relates the  
respective isomorphisms with quantum cohomology coming from \cite{peterson,rietsch2001flagvarieties} and from  Theorem~\ref{t:Peterson},
\begin{equation}\label{e:conventionsdiagram}
\begin{tikzcd}
	{\C[Y_P^\circ]} && {\C[X_P]} \\
	{qH^*(SL_n/P)[q_{j}\inv;j\in I^P]} && {qH^*(SL_n/P)[q_{j}\inv;j\in I^P].}
	\arrow["\mu_P^*",from=1-1, to=1-3]
\arrow["\text{\cite{peterson,rietsch2001flagvarieties}}", from=2-1, to=1-1]
\arrow["{\rm{Theorem}~\ref{t:Peterson}}"', from=2-3, to=1-3]
	\arrow["id", from=2-1, to=2-3]
\end{tikzcd}\end{equation}
 Note that we could have alternatively defined an isomorphism $X_P\to Y_P^\circ$ by $u\mapsto u\inv w_0 B_-$ instead of using $\iota$. This would have had the effect of replacing the identity map in \eqref{e:conventionsdiagram} by the sign map $\epsilon(\sigma^{w}_P)=(-1)^{\ell(w)}\sigma^{w}_P$, see the proof of Proposition~\ref{p:comparison}.
\end{remark}

To check Proposition~\ref{p:comparison} we will need the following straightforward lemma. 
\begin{lemma}\label{l:lambda'}
Let $w_\lambda\in W^{P_k}$ where $\lambda\subseteq k\times m$ with $k+m=n$. Let $\lambda'\subseteq m\times k$ denote the conjugate partition. We have that
\begin{equation*}\Delta_m^{\lambda'}(u)=
\Delta^{\lambda}_k(\bar w_0\inv (u\inv)^T \bar w_0)
\end{equation*}
for any $u\in SL_n$.
\end{lemma}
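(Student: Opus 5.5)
The plan is to reduce the identity to two classical facts: how conjugation by $\bar w_0$ acts on minors, and Jacobi's complementary minor formula for $(u^{-1})^T$. Combining these gives the identity up to a global sign, and the sign is then fixed separately.

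Write $g:=(u^{-1})^T$ and $h:=\bar w_0^{-1} g\,\bar w_0$, so the right-hand side is $\Delta^\lambda_k(h)$. By \eqref{e:wlambda} and Definition~\ref{d:wminornew}, this is the minor of $h$ with row set $R=[k]+m=\{m+1,\dotsc,n\}$ and column set $N=\{i_1<\dotsb<i_k\}$. Here $N$ is the set of northward steps of the SE border path of $\lambda$, and $E=[n]\setminus N$ is the set of eastward steps.

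\textbf{Step 1: conjugation by $\bar w_0$.} The matrix $\bar w_0$ is antidiagonal with entries $\pm1$, since it is a signed permutation matrix for $w_0$. Hence
\[
\Delta_{R,N}(h)=\epsilon_1\,\Delta_{w_0(R),w_0(N)}(g),
\]
where $w_0(i)=n+1-i$ and $\epsilon_1\in\{\pm1\}$ depends only on $R$, $N$ and the signs in $\bar w_0$, not on $u$. Note that $w_0(R)=[k]$.

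\textbf{Step 2: Jacobi's identity.} Since $\det u=1$, Jacobi's complementary minor identity gives
\[
\Delta_{I,J}\big((u^{-1})^T\big)=(-1)^{\sum I+\sum J}\,\Delta_{[n]\setminus I,\,[n]\setminus J}(u).
\]
Applying this with $I=[k]$ and $J=w_0(N)$ yields
\[
\Delta_{[k],w_0(N)}(g)=\epsilon_2\,\Delta_{\{k+1,\dotsc,n\},\,w_0(E)}(u),
\]
using $[n]\setminus w_0(N)=w_0(E)$. The row set $\{k+1,\dotsc,n\}$ is exactly $[m]+n-m$. For the columns, the identity $w_0w^{(k)}_\lambda w_0^{-1}=w^{(m)}_{\lambda'}$ noted after \eqref{e:wlambda} gives
\[
\{w_{\lambda'}(1),\dotsc,w_{\lambda'}(m)\}=w_0\,w_\lambda(\{k+1,\dotsc,n\})=w_0(E).
\]
So the minor on the right is precisely $\Delta^{\lambda'}_m(u)$. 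Therefore
\[
\Delta^{\lambda}_k\big(\bar w_0^{-1}(u^{-1})^T\bar w_0\big)=\epsilon_1\epsilon_2\,\Delta^{\lambda'}_m(u),
\]
with $\epsilon_1\epsilon_2$ a constant depending only on $\lambda$.

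\textbf{Step 3: the sign (main obstacle).} Showing $\epsilon_1\epsilon_2=+1$ is where the real work lies. I would first try to compute it directly. Fix the explicit signs of $\bar w_0$ obtained from \eqref{e:wbar}; the entry in row $i$ has sign $(-1)^{i-1}$ up to a global convention. Then $\epsilon_1$ is a product over $R\cup N$ of these signs, and $\epsilon_2=(-1)^{\binom{k+1}{2}+\sum_{j\in N}(n+1-j)}$. Check that the parities cancel, using $\sum_{j\in N}j=\sum_\ell(\lambda_\ell+\ell)$.

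As a more conceptual fallback, use that $\epsilon_1\epsilon_2$ is independent of $u$, so it suffices to evaluate both sides at a single $u$ where they are nonzero. Note that $\theta(u)=\bar w_0^{-1}(u^{-1})^T\bar w_0$ is an automorphism sending $y_i(t)\mapsto y_{n-i}(t)$ (up to checking signs on generators), and so it preserves $U_-(\R_{>0})$. For totally positive $u$, both $\Delta^{\lambda'}_m(u)$ and $\Delta^\lambda_k(\theta(u))$ are minors of totally positive lower unitriangular matrices that are not forced to vanish, so both are positive. This forces $\epsilon_1\epsilon_2=1$. The key sub-check for this route is the generator computation $\theta(y_i(t))=y_{n-i}(t)$, which reduces to an $SL_2$ calculation with the representatives $\bar s_i$.
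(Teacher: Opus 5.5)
Your proof is correct, but it follows a different route from the paper.

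\textbf{The paper's route.} The paper never separates magnitude from sign. It writes both minors as matrix coefficients and uses the $SL_n$-equivariant identification $\bigwedge^m\C^n\cong(\bigwedge^k\C^n)^*$, $\bar w\cdot v_1\wedge\dotsb\wedge v_m\mapsto\bar w\cdot(v_{n-k+1}\wedge\dotsb\wedge v_n)^*$. It moves $u$ across the pairing, which produces $(u^{-1})^T$, and relies on the lifted identities $\bar w_0^{-1}\bar w_{\lambda'}\bar w_0=\bar w_\lambda$ and $(\bar w^{-1})^T=\bar w$. The sign therefore comes out automatically from working with the fixed representatives $\bar w$.

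\textbf{Your route.} You identify the lemma as Jacobi's complementary-minor theorem composed with conjugation by a signed antidiagonal matrix. This needs only the unlifted identity $w_0w_\lambda w_0=w_{\lambda'}$ to match index sets, and it isolates all sign bookkeeping in one constant. That makes the determinantal content of the lemma transparent. The paper's formulation, by contrast, is the one reused directly in the proof of Proposition~\ref{p:comparison}.

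\textbf{Your Step~3 is easier than you anticipate.} From \eqref{e:wbar} one gets $\bar w_0 v_j=(-1)^{j-1}v_{n+1-j}$, so $(\bar w_0^{-1}g\bar w_0)_{ab}=(-1)^{a+b}g_{n+1-a,\,n+1-b}$. The two order reversals each contribute $(-1)^{\binom k2}$ and cancel. This gives $\epsilon_1=(-1)^{\sum R+\sum N}=(-1)^{\sum w_0(R)+\sum w_0(N)}=\epsilon_2$, using $|R|=|N|$. The formula for $\sum N$ is not needed.

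\textbf{Your positivity fallback also works.} The same entry formula gives $\theta(y_i(t))=y_{n-i}(t)$ exactly. Both minors use the bottom rows, so they are not identically zero on $U_-$, and hence they are positive on its totally positive part.
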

\begin{proof}
The proof uses the isomorphism $\bigwedge\nolimits^m\C^n\cong \left(\bigwedge\nolimits^k\C^n\right)^*$  
of representations of $SL_n$, which is explicitly given by
\[
\bar w\cdot v_1\wedge\dotsc \wedge v_m \ \mapsto\ \bar w \cdot (v_{n-k+1}\wedge v_{n-k+2}\wedge\dotsc\wedge v_n)^*, 
\]
where $w\in W^{P_m}$. Note that the identity $w_0\inv w_{\lambda'}w_0= w_\lambda $ lifts to $ \bar w_0\inv\bar w_{\lambda'}\bar w_0=\bar w_{\lambda}$ since $\bar w_0\bar w_\lambda\inv=\overline{w_0 w_\lambda\inv}=\overline{w_{\lambda'}\inv w_0}=\bar w_{\lambda'}\inv\bar w_0$, and note also that $(\bar w\inv)^T=\bar w$. We now have
\begin{eqnarray*}
\Delta^{\lambda'}_m(u)&=&\langle\, u \bar w_{\lambda'} \cdot  (v_{n-k+1}\wedge v_{n-k+2}\wedge\dotsc\wedge v_n)^*,\bar w_0 \cdot  (v_{n-k+1}\wedge v_{n-k+2}\wedge\dotsc\wedge v_n)^*\,\rangle\\
&=&\langle\, \bar w_0^T u \bar w_{\lambda'} \cdot  (v_{n-k+1}\wedge v_{n-k+2}\wedge\dotsc\wedge v_n)^*,  (v_{n-k+1}\wedge v_{n-k+2}\wedge\dotsc\wedge v_n)^*\,\rangle\\
&=& (v_{n-k+1}\wedge v_{n-k+2}\wedge\dotsc\wedge v_n)^*( \bar w_{\lambda'}\inv u\inv (\bar w_0^T)\inv \cdot v_{n-k+1}\wedge\dotsc \wedge v_n)\\
&=&(\bar w_0\cdot v_{1}\wedge\dotsc\wedge v_k)^*( \bar w_{\lambda'}\inv u\inv (\bar w_0^T)\inv \cdot v_{n-k+1}\wedge\dotsc \wedge v_n)\\
&=&\langle
\bar w_0\cdot v_{1}\wedge\dotsc\wedge v_k,\bar w_{\lambda'}\inv u\inv (\bar w_0^T)\inv \cdot v_{n-k+1}\wedge\dotsc \wedge v_n\rangle\\
&=&\langle
(\bar w_0)\inv(u\inv)^T(\bar w_{\lambda'}\inv)^T\bar w_0\cdot v_{1}\wedge\dotsc\wedge v_k, v_{n-k+1}\wedge\dotsc \wedge v_n\rangle\\
&=&\langle
(\bar w_0)\inv(u\inv)^T\bar w_0\bar w_0\inv \bar w_{\lambda'}\bar w_0\cdot v_{1}\wedge\dotsc\wedge v_k, v_{n-k+1}\wedge\dotsc \wedge v_n\rangle\\
&=&\langle(\bar w_0)\inv(u\inv)^T\bar w_0 \bar w_{\lambda}\cdot v_{1}\wedge\dotsc\wedge v_k, v_{n-k+1}\wedge\dotsc \wedge v_n\rangle = \Delta^\lambda_k(\bar w_0\inv(u\inv)^T\bar w_0),
\end{eqnarray*}
which proves the lemma.
\end{proof}
%\K{ is actually $\iota(u)=\bar w_0\inv (u\inv)^T \bar w_0$ when $u $ is Toeplitz and could this simplify the proof below? .}
\begin{proof}[Proof of Proposition~\ref{p:comparison}] Let us first check that $\mu$ sends $X_P$ to $Y_P^\circ$. It suffices to show that $\iota$ sends $X_P=X\cap B_-w_0w_P B_-$ to $X_Q=X\cap B_-w_P w_0 B_-$, since $Y^\circ=Xw_0B_-$ and $Y_P^\circ=(X\cap B_+w_Pw_0 B_+)w_0B_-$.   
Passing to $GL_n$, let $d_\epsilon$ be a diagonal matrix with alternating entries $\pm 1$, so that $\alpha_i(d_\epsilon)=-1$ for all $i$. Then we note that $\iota(u)=d_\epsilon u\inv d_\epsilon\inv$. The coadjoint action also extends to $GL_n$ %to include $d_\epsilon$ %on $\mathfrak g^*$ 
and we have $d_\epsilon\cdot F=-F$, from which it follows that $\iota$ preserves $X=U_-^F$. The statement that $\iota(X_P)=X_Q$ follows from the fact that $\iota$ is an anti-automorphism. 
%Now $Y^\circ=U_-^Fw_0B_-/B_-$, and we therefore have that $\mu$ is a well-defined isomorphism. 

We now prove \eqref{e:mustarformula}. Consider $u\in U_-$ and use Lemma~\ref{l:lambda'} to re-express the quotient of minors 
\begin{eqnarray*}
    \nonumber
    \frac{\Delta_k^{\lambda}(u)}{\Delta_k(u)}
    &=&\frac{\Delta_{n-k}^{\lambda'}(\bar w_0\inv (u\inv)^T\bar w_0)}{\Delta_{n-k}(\bar w_0\inv (u\inv)^T\bar w_0)}
    =\frac{\langle \bar w_0\inv (u\inv)^T\bar w_0\bar w_{\lambda'}\cdot v_1\wedge\dotsc\wedge v_{n-k},\bar w_0\cdot v_1\wedge \dotsc\wedge v_{n-k}\rangle}{\langle \bar w_0\inv (u\inv)^T\bar w_0\cdot v_1\wedge\dotsc\wedge v_{n-k},\bar w_0\cdot v_1\wedge \dotsc\wedge v_{n-k}\rangle}
    \\ 
    \nonumber
   & =&\frac{\langle \bar w_0\bar w_{\lambda'}\cdot v_1\wedge\dotsc\wedge v_{n-k},u\inv (\bar w_0\inv)^T \bar w_0\cdot v_1\wedge \dotsc\wedge v_{n-k}\rangle}{\langle \bar w_0\cdot v_1\wedge\dotsc\wedge v_{n-k},u\inv (\bar w_0\inv)^T\bar w_0\cdot v_1\wedge \dotsc\wedge v_{n-k}\rangle}
    =\frac{\langle \bar w_{\lambda}\bar w_0 \cdot v_1\wedge\dotsc\wedge v_{n-k},u\inv \cdot v_1\wedge \dotsc\wedge v_{n-k}\rangle}{\langle \bar w_0\cdot v_1\wedge\dotsc\wedge v_{n-k},u\inv\cdot v_1\wedge \dotsc\wedge v_{n-k}\rangle}
    \\
    \nonumber
    &=&\frac{\langle \bar w_{\lambda}\cdot v_{k+1}\wedge\dotsc\wedge v_{n},u\inv \cdot v_1\wedge \dotsc\wedge v_{n-k}\rangle}{\langle v_{k+1}\wedge\dotsc\wedge v_{n},u\inv\cdot v_1\wedge \dotsc\wedge v_{n-k}\rangle}=
    \frac{\langle u\inv\dot w_0 \cdot v_{k+1}\wedge\dotsc\wedge v_{n}, \bar w_{\lambda}\cdot v_{k+1}\wedge\dotsc\wedge v_{n}\rangle}{\langle u\inv\dot w_0 \cdot v_{k+1}\wedge\dotsc\wedge v_{n},  v_{k+1}\wedge\dotsc\wedge v_{n}\rangle}
 \text{\qquad(\theequation)}\label{e:SwvsGw} \refstepcounter{equation}  \\
    &=& (-1)^{|\lambda|}G^{w_{\lambda}}(u\inv \dot w_0B_-).\nonumber
\end{eqnarray*}
Note that we have 
\[\bar w_{\lambda}\cdot v_{k+1}\wedge\dotsc\wedge v_{n}=(-1)^{|\lambda|}\,\dot w_\lambda\cdot v_{k+1}\wedge\dotsc\wedge v_{n},\]
which gives rise to the sign $(-1)^{|\lambda|}$ in the final equality. We now claim that this sign disappears if we replace $u\inv$ by $\iota(u)$. Namely, consider the Zariski-open subset of $U_-$ consisting of all elements of the form $y(\mathbf m)=y_{i_1}(m_1)\dotsc y_{i_N}(m_N)$, where $w_0=s_{i_1}\dotsc s_{i_N}$ is a fixed reduced expression and $\mathbf m=(m_1,\dotsc, m_N)\in (\C^*)^N$. We call this a `Lusztig torus' in $U_-$. Set
\begin{equation}
    D_\lambda(\mathbf m):= \langle \iota(y(\mathbf m))\dot w_0 \cdot v_{k+1}\wedge\dotsc\wedge v_{n}, \bar w_{\lambda}\cdot v_{k+1}\wedge\dotsc\wedge v_{n}\rangle,
\end{equation}
relating to the final numerator in \eqref{e:SwvsGw}. 
Then $D_\lambda$ is a homogeneous polynomial of degree $k (n-k)-|\lambda|$ in the $m_i$ coordinates (by weight-space considerations). Since $y((m_j)_j)\inv=\iota(y((-m_j)_j)$ we have 
\begin{equation}
    \langle y(\mathbf m)\inv\dot w_0 \cdot v_{k+1}\wedge\dotsc\wedge v_{n}, \bar w_{\lambda}\cdot v_{k+1}\wedge\dotsc\wedge v_{n}\rangle=D_\lambda(-\mathbf m)= (-1)^{k(n-k)-|\lambda|}D_\lambda(\mathbf m).
\end{equation}
It follows that 
\begin{equation}
\langle u\inv\dot w_0 \cdot v_{k+1}\wedge\dotsc\wedge v_{n}, \bar w_{\lambda}\cdot v_{k+1}\wedge\dotsc\wedge v_{n}\rangle=(-1)^{k(n-k)-|\lambda|} \langle \iota(u)\dot w_0 \cdot v_{k+1}\wedge\dotsc\wedge v_{n}, \bar w_{\lambda}\cdot v_{k+1}\wedge\dotsc\wedge v_{n}\rangle,
\end{equation}
for all $u\in U_-$. Applying this identity to both numerator and denominator (where $\lambda=\emptyset$) of $G^{w_\lambda}(u\inv \dot w_0 B_-)$, we see that 
\[
G^{w_\lambda}(u\inv \dot w_0 B_-)=(-1)^{|\lambda|}G^{w_\lambda}(\iota(u) \dot w_0 B_-).
\]
It follows that $\mu^*(G^{w_\lambda}|Y^\circ)=\mathfrak S^{w_\lambda}$, and the statements about the restriction $\mu_P$ to $X_P$ follow immediately.  
\end{proof}

\begin{remark}\label{r:iotaequiv} We note that the two positivity-preserving involutions $u\mapsto\iota(u)$ and $u\mapsto \bar w_0\inv (u\inv)^T \bar w_0$ of $U_-$ used above are very different in nature, with the first being an anti-automorphism while the second is an automorphism. However, for $u\in X$ we have the equality $\iota(u)=\bar w_0\inv (u\inv)^T\bar w_0$, as both sides represent the involution $p(x)\leftrightarrow \frac{1}{p(-x)} \mod x^{n+1}$ on the level of generating functions. Note that $X$ is an abelian subgroup of $U_-$, so automorphisms and anti-automorphisms  coincide.
 \end{remark}

\subsection{{Quantum parameters for $SL_n/P$}}\label{s:qparam} Consider the partial flag variety $SL_n/P=\Fl_{\mathbf n}(\C^n)$, where $\mathbf n=(n_1,\dotsc, n_r)$ is given by $I^P= \{n_i\mid i=1,\dotsc, r\}$. We may also write $P_{\mathbf n}$ for this parabolic $P$. Let us identify the coordinate ring $\C[X_P]$ with  $qH^*(G/P,\C)[q_{n_1}\inv,\dotsc,q_{n_r}\inv]$ via the isomorphism identifying the functions $\mathfrak S^w_P$  associated to Grassmannian permutations with descent $k\in I^P$ with their corresponding Schubert classes $\sigma^w_P$, see  Definition~\ref{d:Swlambda} and \cref{t:Peterson}.  
Under this isomorphism the quantum parameters $q_{n_i}$ are then described explicitly as follows.

Consider a factorisation of  $u\in X_P$ of the form $u=x_L\bar w_0\bar w_P\inv d x_R$ with $x_L,x_R\in U_+$ and $d\in T=T_{SL_n}$. Note that $d$ is unique (while $x_L,x_R$ are not in general).  From $u\in X_P$ it follows straightforwardly that $d\in T^{W_P}$. In the full flag variety case, $d$ is represented by the diagonal matrix appearing in Remark~\ref{r:diforfullflag}. In our conventions we have that, as functions on $X_P$, the quantum parameters are given by 
\begin{equation}\label{e:qni}
q_{n_i}(u)=\alpha_{n_i}(d)\inv.
\end{equation}
Moreover, if $u\in X_P(\R_{>0})$ then $d$ is in the positive part of $T^{W_P}$. We also note that (by a translation of \cite[Lemma~5.1]{rietsch2001flagvarieties}) the quantum parameters $q_{n_1},\dotsc, q_{n_r}$ from the quantum cohomology of $\Fl_{\mathbf n}(\C^n)$ 
 satisfy
\begin{equation}\label{e:qviadelta}
q_{n_i}^{(n_{i+1}-n_i)(n_i-n_{i-1})}=\frac{(\Delta_{n_{i-1}})^{n_{i+1}-n_i}(\Delta_{n_{i+1}})^{n_{i}-n_{i-1}}}{(\Delta_{n_{i}})^{n_{i+1}-n_{i-1}}},
\end{equation}
 viewed as functions on $X_{P}$. In the full flag variety case this gives $q_i=\frac{\Delta_{i-1}\Delta_{i+1}}{\Delta_i^2}$, which translates back to the formula due to Kostant~\cite{Kostant:qcoh}.

\begin{remark}\label{r:qcomparison}
Recall from \cite{rietsch2001flagvarieties,rietsch:mirror} that for $yw_0B_-\in Y^\circ_P$, factorised as $y=u_1 t \dot w_P\dot w_0\inv u_2$, the quantum parameter $q_{n_i}$ is given by $q_{n_i}(yw_0 B_-)=\alpha_{n_i}(t)$. Given $u\in X_P$ and supposing we have chosen a factorisation $u=x_L\bar w_0\bar w_P\inv d x_R$, as in Section~\ref{s:qparam}, then for the image  $\iota(u)w_0B_-\in Y_P^\circ$ we have that $y:=\iota(u)$ has the factorisation
\[
y=\iota(u)=\iota(x_R)d\inv\iota(\bar w_0\bar w_P\inv)\iota(x_L)=\iota(x_R)d\inv\bar w_P\inv\bar w_0\iota(x_L),
\]
which is of the form $u_1 t \dot w_P\dot w_0\inv u_2$ for $\iota(x_R)=u_1, \iota(x_L)=u_2$ and $t=d\inv$. (Note that $\iota(\bar s_i)=\bar s_i$ and $\dot w_P=\bar w_P\inv$ and $\dot w_0\inv=\bar w_0$.) The formula $q_{n_i}(u)=\alpha_{n_i}(d)\inv$ is therefore equivalent to the formula $q_{n_i}(y w_0 B_-)=\alpha_{n_i}(t)$ from \cite{rietsch2001flagvarieties,rietsch:mirror}. 
\end{remark}

\subsection{General Schubert classes}\label{s:Sw} A remarkable aspect of Peterson's theory is that for \textit{any} $w\in W$ there is one rational function ``$\mathfrak S^w$'' that is responsible for all  of the Schubert classes $\sigma_P^w$ arising in all the quantum cohomology rings $qH^*(G/P)[q_{n_1}\inv,\dotsc, q_{n_r}\inv]$.  If $w$ is a Grassmannian permutation then $\mathfrak S^w$ is the function given explicitly in Definition~\ref{d:Swlambda}. In general, it is determined by the (full flag) map
\begin{equation}\label{e:PetIsoB}
qH^*(SL_n/B,\C)[q_1\inv,\dotsc, q_{n-1}\inv]\overset\sim\longrightarrow \C[X_B]\hookrightarrow \C(X)
\end{equation}
coming from Theorem~\ref{t:Peterson}. 
It follows from \cite[Proposition~11.1]{rietsch2001flagvarieties} that if 
%there is a Schubert basis element in the associated quantum cohomology ring indexed by $w$, that is, if 
$w\in W^P$, then the rational function $\mathfrak S^w$ is regular on $X_P$, and its restriction to $X_P$ represents precisely the Schubert basis element $\sigma^w_P$. 
\begin{definition}\label{d:SwPgen}
For any $w\in W$ consider the rational function $\mathfrak S^w$ on $X$ obtained from $\sigma^w_B$ via \eqref{e:PetIsoB}. For $w\in W^P$ we obtain an element of $\C[X_P]$ by $\mathfrak S_P^w:=\mathfrak S^w|_{X_P}$. This regular function agrees with the image $\Phi_P(\sigma^w_P)$ of $\sigma^w_P$ under the Peterson Toeplitz isomorphism from Theorem~\ref{t:Peterson}.    
Compare Definition~\ref{d:Swlambda}.
\end{definition}
The following definition will be useful for extending the domains of the functions $\mathfrak S^w_B$. 
\begin{definition}\label{d:XJ}
For any subset $J\subseteq I$, let $X_{(J)}:=\{x\in X\mid \Delta_j(x)\ne 0\text{ for all $j\in J$}\}$. 
\end{definition}
\begin{remark}\label{r:XJ}
The variety $X_{(J)}$ collects together all of the Peterson Toeplitz varieties $X_{P_{\mathbf n}}$ corresponding to partial flag varieties $\Fl_{\mathbf n}$ for which the set of dimensions $\{n_i\}$ includes all  of $J$ 
\begin{equation}\label{e:XJ}
X_{(J)}(\C)=\bigsqcup_{\{\mathbf n\}\supseteq J} X_{P_\mathbf n}(\C).
\end{equation}
\end{remark}
Note that for every $w\in W$ there exists a unique parabolic subgroup $P$ such that $w\in W^P$ and which is maximal for this condition. Namely, if we let $J(w):=\{i\in I\mid w(\alpha_i)<0\}$, then this parabolic subgroup $P$ is determined by $I^P=J(w)$. 

\section{Grassmannian Toeplitz Matrices, quantum cohomology and total positivity}\label{sec:preliminaries}

We will now describe the variety $X_P$ and the isomorphism from \cref{t:Peterson} in more detail in the Grassmannian case, where $P=P_k$. We will make practical use of some results from \cite{rietsch2001grassmannians}. However, it is important to note that the isomorphism of $X_{P_k}$ with quantum cohomology described in that paper is a completely different one from the isomorphism in \cref{t:Peterson}, and in particular does not have analogues for other non-maximal parabolics. In this paper, the isomorphism  that we focus on is the one given in \cref{t:Peterson}.

\subsection{Grassmannian Toeplitz Matrices}\label{ssec:toeplitz-matrices}

We now consider $\mathrm{Gr}(k,n)=SL_n/P_k$, where we may fix $m\coloneq n-k$. % Note that $\mathrm{Gr}(k,n)\cong \mathrm{Gr}(m,n)$.
We have the associated Peterson Toeplitz variety 
\begin{equation}\label{e:XPk}
  X_{P_k}(\C) = \left\{ u=u_n(c_1,\dotsc, c_m)=\left .
  \begin{pmatrix}
    1& &&&& \\
    c_1&1 &&&&\\
    \vdots &c_1&&&&\\
    c_m & & \ddots &&&\\
     &\ddots &&c_1&1&\\
    0 & & c_m & \cdots &c_1&1\end{pmatrix} \,\right|\, \Delta_k(u) \neq 0 , \quad \Delta_{i\neq k}(u) = 0 \right\}.
\end{equation}
For every partition $\lambda\subseteq k\times m$ consider the $k\times k$ minor $\Delta^\lambda_k$ from Definition~\ref{d:wminornew} and \eqref{e:Deltalambda}. We replace $w_\lambda$ by $\lambda$ also in our notation for the functions $\mathfrak S_P^w$ from Definition~\ref{d:Swlambda}, 
\begin{equation}\label{e:Slambda}
\mathfrak S_{P_k}^\lambda(u):=\mathfrak S^{w_\lambda}_{P_k}(u)=\frac{\Delta_k^\lambda(u)}{\Delta_k(u)}.
\end{equation}
We may drop the $k$ from the notation if we are in the Grassmannian case $Gr(k,n)$ and the parabolic $P=P_{k}$ is fixed. Note that when $\lambda=\emptyset$ we have $w_\emptyset=e$ and $\mathfrak S_{P}^\emptyset=1$. On the other hand for $\lambda$ corresponding to the maximal Young diagram, the $k\times m$ rectangle, we have $\Delta^{k\times m}=1$ and  therefore $\mathfrak S_{P}^{k\times m}=1/\Delta_k$.
\subsection{Quantum Cohomology of $Gr(k,n)$}\label{s:qcoh} We start by recalling in more detail the original description of the quantum cohomology ring $qH^*(Gr(k,n))$, which goes back to  \cite{BERTRAM1997289,BDW,SiebertTian,WittenGrassmannian,BuchGrassmannianQCoh}. 
Let $t_1,\dotsc, t_k$ be the Chern roots of the dual tautological subbundle $\mathcal S_{k,n}^\vee$ of $Gr(k,n)$. Let $X_j=E_j(t_1,\dotsc, t_k)$ and $Y_r=H_r(t_1,\dotsc, t_k)$ denote the elementary and complete symmetric polynomials in the $t_i$'s, respectively. Thus $Y_r=\det(X_{i-j+1})_{i=1}^r$. The cohomology of the Grassmannian is the quotient of the ring of symmetric polynomials  $\Z[X_1,\dotsc, X_k]=\Z[t_1,\dotsc, t_k]^{S_k}$ by the ideal $I=(Y_{n-k+1},\dotsc, Y_n)$, and the quantum cohomology is the $1$-parameter deformation (since the Grassmannian has Picard rank $1$) given explicitly by 
\begin{equation}\label{e:qcoh}
qH^*(Gr(k,n),\Z)=\Z[X_1,\dotsc, X_k;q]/(Y_{n-k+1},\dotsc, Y_n+(-1)^k q).
\end{equation}
The Schubert classes are indexed by partitions $\lambda\subseteq k\times {(n-k)}$ and form a basis $\{\sigma^\lambda\}$ of  $qH^*(Gr(k,n),\Z)$ over $\Z[q]$. We may write $\sigma^\lambda=\sigma_{P_k}^\lambda$ to emphasise the Grassmannian $Gr(k,n)$ in question, but will generally leave out the subscript when the context is clear. 

It was proved by Bertram \cite{BERTRAM1997289} that the Schubert basis elements  
are represented by Schur polynomials.  
Namely, if $\lambda=(\lambda_1)$ then $\sigma^{(\lambda_1)}=Y_{\lambda_1}$, and for general $\lambda=(\lambda_1,\dotsc, \lambda_r)$, the quantum Schubert class $\sigma^\lambda$ satisfies the `quantum Giambelli formula'
 \begin{equation}
  \label{eq:q-giambelli}
   \sigma^{\lambda} = \det 
    \begin{pmatrix}
      Y_{\lambda_{1}} &  Y_{\lambda_{1} + 1} & Y_{\lambda_{1}+2} &\cdots & Y_{\lambda_{1} + r-1} \\
                          Y_{\lambda_{2}-1}  & Y_{\lambda_{2}} & & & \\
                         Y_{\lambda_{3}-2}   &   Y_{\lambda_{3}-1}& Y_{\lambda_{3}}& & \\
                          \vdots &  & &\ddots &\vdots \\
                       Y_{\lambda_{kr}- r+1}     &  &\cdots & & Y_{\lambda_{r}}
\end{pmatrix}
\end{equation}
inside $qH^*(Gr(k,n))$, where $Y_0:=1$ and $Y_{<0}:=0$. In other words, the usual Giambelli formula in $H^*(Gr(k,n))$ continues to hold without any $q$-deformation in the quantum cohomology ring. Since, inside the ring of symmetric polynomials, this is the Jacobi-Trudi formula for the Schur polynomial $S_\lambda$, we have that $S_\lambda(t_1,\dotsc, t_k)$ represents $\sigma^\lambda$. We note that this circumstance is particular to Grassmannians, while for more general partial flag varieties,  polynomials representing Schubert classes (quantum Schubert polynomials) need to be deformed to conform with quantum Schubert calculus \cite{FGP}.

The isomorphism $Gr(k,n)\cong Gr(m,n)$, where $m=n-k$, determines an isomorphism of quantum cohomology rings that sends $q_k$ to $q_m$ and $\sigma^\lambda_{P_k}$ to $\sigma^{\lambda'}_{P_m}$, see \cite{BCFF}. This also leads to dual versions of the quantum Giambelli and other formulas.
We recall a less standard presentation of the quantum cohomology ring that is more symmetric, which was  also given in \cite{BCFF},  
\begin{equation}\label{e:lambdakm}
\Lambda_{k,m}:=\Z[X_1,\dotsc, X_k;Y_1,\dotsc, Y_m;q]/(X_1-Y_1,X_2-X_1Y_1+Y_2,\dotsc, X_{k-1}Y_m-X_kY_{m-1},X_kY_m-q).
\end{equation} 
Namely, here $X_i$ and $Y_i$ represent the same classes as above, determining the isomorphism $qH^*(Gr(k,n),\Z)\cong\Lambda_{k,m}$.
We finally introduce a symmetric description of $qH^*(Gr(k,n),\C)[q\inv]$ as coordinate ring, that will be of use later on.
 \begin{definition}\label{d:Ykm} Let 
\begin{eqnarray}\label{e:Ykm}
\mathcal Y_{k,m}&:=&\{(z_1,\dotsc,z_k,z_{k+1},\dotsc, z_n)\in \C^n\mid z_1^n=\dotsc= z_n^n,\, z_i\ne z_j,\, z_i\ne 0 \}\big/S_k\times S_m,
\end{eqnarray}
where the $S_k$-factor acts by permuting the first $k$ coordinates, and $S_m$ permutes the remaining $m$. Note that the action of $S_k\times S_m$ is free, and $\mathcal Y_{k,m}$ is a (reducible) nonsingular affine curve. We denote an element of $\mathcal Y_{k,m}$ by $[z_1,\dotsc,z_n]_{k,m}$ to keep in mind that this expression is symmetric in the first $k$ and the last $m$ entries. We have the following lemma that will be very useful later on.
 \end{definition}
 \begin{lemma}\label{l:Ykm} We have an isomorphism 
 \begin{eqnarray*}
qH^*(Gr(k,n),\C)[q\inv]&\longrightarrow &\quad\qquad\C[\mathcal Y_{k,m}]\\
\sigma_{P_k}^\lambda \quad\qquad &\mapsto &S_\lambda(z_1,\dotsc, z_k)=S_{\lambda'}(-z_{k+1},\dotsc, -z_n).
 \end{eqnarray*}
In particular, under this isomorphism, $X_i$ maps to $E_i(z_1,\dotsc, z_k)$ and $Y_j$ to $E_j(-z_{k+1},\dotsc ,-z_n)$. Moreover $q$ maps to $E_n(z_1,\dotsc,z_k,-z_{k+1},\dotsc, -z_n)=(-1)^m z_1\dotsc z_n=(-1)^{k+1}z_1^n$.
\end{lemma}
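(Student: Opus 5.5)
We work throughout with the symmetric presentation $\Lambda_{k,m}$ from \eqref{e:lambdakm}, tensored with $\C$ and with $q$ inverted. The plan is to write down a ring homomorphism from this presentation to $\C[\mathcal Y_{k,m}]$, show it is well defined, and then show it is bijective.

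\emph{The map.} We define $\psi$ by
\[
X_i\mapsto E_i(z_1,\dotsc,z_k),\qquad Y_j\mapsto E_j(-z_{k+1},\dotsc,-z_n),\qquad q\mapsto E_k(z_1,\dotsc,z_k)\,E_m(-z_{k+1},\dotsc,-z_n).
\]
These images are $S_k\times S_m$-invariant, so they are regular functions on $\mathcal Y_{k,m}$.

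\emph{Well-definedness.} The generating-function form of the relations in \eqref{e:lambdakm} is
\[
\Big(\sum_i X_i x^i\Big)\Big(\sum_j (-1)^jY_jx^j\Big)=1+(-1)^m q\,x^n .
\]
Under $\psi$ the left side becomes $\prod_{i=1}^n(1+z_ix)$. On $\mathcal Y_{k,m}$ the $z_i$ are $n$ distinct nonzero numbers with a common $n$-th power $z_1^n$. Hence they are exactly the $n$ roots of $t^n-z_1^n$, and so
\[
\prod_{i=1}^n(1+z_ix)=1+(-1)^{n+1}z_1^n x^n .
\]
All intermediate relations therefore hold. Comparing top coefficients gives $\psi(q)=E_n(z_1,\dotsc,z_k,-z_{k+1},\dotsc,-z_n)$, and this equals the stated value up to sign bookkeeping with $(-1)^m$, which I will check carefully. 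In particular $\psi(q)$ is a nonzero multiple of $z_1^n$, which is invertible on $\mathcal Y_{k,m}$, so $\psi$ extends to the localisation at $q$.

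\emph{Schubert classes.} By quantum Giambelli \eqref{eq:q-giambelli}, $\sigma^\lambda$ is the Jacobi–Trudi determinant in the $Y$'s, or dually the determinant in the $X$'s (via the conjugate form, \cite{BCFF}). Hence it maps to $S_\lambda(z_1,\dotsc,z_k)$, which is also $S_{\lambda'}(-z_{k+1},\dotsc,-z_n)$ because the $E$'s in one set of variables and the $H$'s in the other are tied by the relation $\prod_i(1+z_ix)=1+c\,x^n$ in all degrees below $n$.

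\emph{Bijectivity.} I expect this to be the main obstacle, and I would argue by dimension counting plus surjectivity.

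1. The source is free over $\C[q^{\pm1}]$ of rank $\binom nk$, with the Schubert basis as a basis.
2. The curve $\mathcal Y_{k,m}$ maps to $\C^*$ via $z_1^n$ (equivalently via $\psi(q)$). Its fibre over a value $a$ consists of the ways of splitting the $n$ distinct $n$-th roots of $a$ into a $k$-set and an $m$-set, so it has $\binom nk$ points. This map is finite étale.
3. Hence $\C[\mathcal Y_{k,m}]$ is locally free of rank $\binom nk$ over $\C[q^{\pm1}]$.
4. Surjectivity: every function on $\mathcal Y_{k,m}$ is a polynomial in the $S_k\times S_m$-symmetric functions of the $z_i$ and in $z_i^{-1}$. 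The inverses are expressible through $z_1^{-n}$, which is a multiple of $q^{-1}$. Symmetric functions of $z_{k+1},\dotsc,z_n$ are generated by the $E_j(-z)$, i.e. by the images of the $Y_j$, and symmetric functions of $z_1,\dotsc,z_k$ by the images of the $X_i$.
5. Fibrewise, a surjection between $\binom nk$-dimensional spaces is an isomorphism.

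Alternatively, for injectivity one can note that the source is reduced, since it is semisimple at $q\neq0$, and that $\psi$ separates the $\binom nk$ points over each $q$-value. Either way, a surjective map of locally free modules of equal rank $\binom nk$ over $\C[q^{\pm1}]$ is an isomorphism, which completes the proof.
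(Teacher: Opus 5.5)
Your proof is correct. It differs from the paper's mainly in how bijectivity is established.

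\textbf{The paper's route.} The paper goes in the opposite direction. It asserts that $\C[\mathcal Y_{k,m}]$ is generated by $\mathbf X_i=E_i(z_1,\dotsc,z_k)$, $\mathbf Y_j=E_j(-z_{k+1},\dotsc,-z_n)$ and $\mathbf q^{\pm1}$. It further asserts that all relations among these come from comparing coefficients in $\prod_{i=1}^n(1+z_ix)=1+(\prod z_i)x^n$. This gives a presentation of $\C[\mathcal Y_{k,m}]$ that visibly matches $\Lambda_{k,m}[q\inv]$. The Giambelli and $q$ statements then follow as in your argument, with the sign obtained by substituting $x=-z_1\inv$. The claim that there are no further relations (equivalently, that the coefficient ideal is radical) is treated as evident.

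\textbf{Your route.} You build $\psi$ out of the known presentation and check the relations. You then prove bijectivity by a rank count: both sides are locally free of rank $\binom{n}{k}$ over $\C[q^{\pm1}]$. For the source this comes from the Schubert basis; for the target, from $\mathcal Y_{k,m}\to\C^*$ being finite \'etale of degree $\binom nk$. Together with surjectivity this finishes the argument. This justifies exactly the step the paper leaves implicit, at the modest cost of importing freeness of $qH^*$ over $\C[q]$.

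\textbf{Two small tightenings.}
\begin{itemize}
\item The sign you defer is immediate. We have $E_k(z_1,\dotsc,z_k)E_m(-z_{k+1},\dotsc,-z_n)=(-1)^m z_1\cdots z_n$. The top coefficient of $\prod(1+z_ix)=1+(-1)^{n+1}z_1^nx^n$ gives $z_1\cdots z_n=(-1)^{n+1}z_1^n$. Hence $\psi(q)=(-1)^{k+1}z_1^n$.
\item For surjectivity, avoid mixing the non-invariant $z_i\inv$ with invariants. It is cleaner to note the following. The cover $\{(z_1,\dotsc,z_n)\}$ of $\mathcal Y_{k,m}$ is closed in $(\C^*)^n$. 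So $\C[\mathcal Y_{k,m}]$ is the ring of $S_k\times S_m$-invariants of a quotient of $\C[z_1^{\pm1},\dotsc,z_n^{\pm1}]$. By exactness of invariants (Reynolds operator), it is generated by $\mathbf X_i$, $\mathbf Y_j$, $\mathbf X_k\inv=\mathbf Y_m\mathbf q\inv$ and $\mathbf Y_m\inv=\mathbf X_k\mathbf q\inv$.
\end{itemize}
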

\begin{proof}
Note that $z_1^n=\dotsc=z_n^n$ for distinct $z_i\ne 0$ is equivalent to the $z_i$ being a full set of roots of a polynomial of the form $x^n-c$ with $c\ne 0$. This condition can also be restated as saying that $\prod_{i=1}^n(1+z_i x)=1+(\prod z_i)x^n$. 

It is straightforward that the coordinate ring of $\mathcal Y_{k,m}$ is generated by the elementary symmetric functions $\mathbf X_i:=E_i(z_1,\dotsc, z_k)$ and $\mathbf Y_j:=E_j(-z_{k+1},\dotsc, -z_n)$. Moreover,  $\prod_{i=1}^n(1+z_i x)=1+(\prod z_i)x^n$ can be rewritten as
\begin{equation}\label{e:coeffcompare}
(1+\mathbf X_1 x+\mathbf X_2 x^2+\dotsc+\mathbf X_k x^k)(1-\mathbf Y_1 x+\mathbf Y_2 x^2+\dotsc +(-1)^m \mathbf Y_m x^m)=1+(-1)^m\mathbf X_k\mathbf Y_m x^n.
\end{equation}
The relations between the generators of $\C[\mathcal Y_{k,m}]$ are now given by comparing coefficients of $x^i$ on either side of \eqref{e:coeffcompare}. If we also set $\mathbf q:=\mathbf X_k\mathbf Y_m$, then it follows that $\C[\mathcal Y_{k,m}]$ has the presentation
\begin{equation}\label{e:CYkm}
\C[\mathbf X_1,\dotsc, \mathbf X_k;\mathbf Y_1,\dotsc,\mathbf Y_m;\mathbf q^{\pm 1}]/(\mathbf X_1-\mathbf Y_1,\mathbf X_2-\mathbf X_1\mathbf Y_1+\mathbf X_2,\dotsc,\mathbf X_{k-1}\mathbf Y_m-\mathbf X_k\mathbf Y_{m-1},\mathbf X_k\mathbf Y_m-\mathbf q).
\end{equation} 
This shows that we have an isomorphism $qH^*(Gr(k,n))\to\C[\mathcal Y_{k,n}]$ given by $X_j\mapsto \mathbf X_j$ and $Y_r\mapsto \mathbf Y_r$ and $q\mapsto \mathbf q$. But then we deduce that $\mathbf Y_j=E_j(-z_{k+1},\dotsc,-z_n)$ is also equal to $H_j(z_1,\dotsc, z_k)$. This extends to the identity $S_{\lambda'}(-z_{k+1},\dotsc,-z_n)=S_\lambda(z_1,\dotsc, z_k)$ by the dual and the regular Jacobi-Trudi formulas. Finally, $S_{\lambda'}(-z_{k+1},\dotsc,-z_n)=S_\lambda(z_1,\dotsc, z_k)$ is the image of  $\sigma^\lambda_{P_k}$ as claimed, by the  quantum Giambelli formula. It is immediate that $\mathbf q=\mathbf X_k\mathbf Y_m$ is given in terms of the $z_i$ by $(-1)^mz_1\dotsc z_n$. Setting $x=-z_1\inv$ in the identity  $\prod_{i=1}^n(1+z_i x)=1+(\prod z_i)x^n$ we deduce that $z_1\dotsc z_n=(-1)^{n+1}z_1^n$, which implies the second expression for $\mathbf q$ that was to be proved. 
\end{proof}
\begin{remark} In the context of Lemma~\ref{l:Ykm} we can now interpret the coordinates $z_1,\dotsc, z_k$ as the Chern roots of the dual tautological bundle $\mathcal S_{k,n}^\vee$ of $Gr(k,n)$, and the coordinates $z_{k+1},\dotsc, z_n$ as the Chern roots of the dual tautological quotient bundle $\mathcal Q_{k,n}^\vee$. The involution
\begin{eqnarray}\label{e:YkmSymm}
\mathcal Y_{k,m}&\overset\sim\longrightarrow & \mathcal Y_{m,k}
\\ \nonumber
{[} y_1,\dotsc , y_n ]_{k,m}&\mapsto &[-y_n,\dotsc,-y_1]_{m,k}.
\end{eqnarray}
induces the isomorphism between $qH^*(Gr(k,n))$ and $qH^*(Gr(m,n))$ from \cite{BCFF} that swaps $\sigma_{P_{k}}^{\lambda}$ and $\sigma_{P_m}^{\lambda'}$ and maps one quantum parameter to the other one.    
\end{remark}
 
\subsection{Total positivity and  ${X}_{P_k}$}\label{sec:totally-posit-part}
For now, $SL_n$ remains fixed and $SL_n/P_k=Gr(k,n)$. We recall that Peterson's isomorphism from~\cref{t:Peterson} sends the Schubert class $\sigma^\lambda$ to $\mathfrak{S}^{\lambda}_{P_k}$. Note that since $\mathfrak S^{\lambda}_{P_k}$ is a quotient of minors it takes values in $\R_{>0}$ on $X_{P_k}(\R_{>0})$. We now recall the explicit description of the totally positive part of $X_{P_k}$ from~\cite{rietsch2001grassmannians}, with some of the notation adapted to our conventions. First, 
given a sequence $\mathbf c=\left( c_1,\dots,c_m \right)$ of coefficients, let us again write $u_n(\mathbf c)$ for the $n\times n$ lower-triangular unipotent Toeplitz matrix with below-diagonal entries determined by $\mathbf c=(c_1,\dotsc,c_m)$, as in  \eqref{e:XPk}. Set $p_u(x)= 1 + c_1x + \cdots + c_{m} x^m$  to be the  generating polynomial associated to $u=u_n(c_1,\dotsc, c_m)$. 

\begin{definition}\label{d:unpu}
We define 
\begin{equation*}
u_{n}^{E}(x_{1},\dots, x_m):=u_n(E_1(\mathbf x),\dotsc, E_m(\mathbf x))=\begin{pmatrix}
 1 &  & &  &        & \\
  E_{1}(\mathbf x) &1   &    &       & & \\
  \vdots &E_{1}(\mathbf x) &  \ddots &  &      &  \\
   E_{m}(\mathbf x)&     &      \ddots  & \ddots &   &  \\
   &   \ddots  &       &    E_{1}(\mathbf x)  &  1    &  \\
   0&     &     E_{m}(\mathbf x)  &     \cdots  &   E_{1}(\mathbf x)    &1
\end{pmatrix},
\end{equation*}
which, equivalently, is the Toeplitz matrix $u=u_n(c_1,\dotsc, c_m)$ with  $p_u(x)=(1+x_1x)(1+x_2x)\dotsc(1+x_m x)$.
\end{definition}

\begin{definition}\label{d:Xnm} Define
\begin{eqnarray}\mathcal X_{n,m}&:=&\{(x_1,\dotsc, x_m)\in \C^m\mid x_1^n=\dotsc= x_m^n,\, x_i\ne x_j,\, x_i\ne 0 \}\big/S_m.
\end{eqnarray}
We write $\{x_1,\dotsc, x_m\}$ for the element of $\mathcal X_{n,m}$ represented by $(x_1,\dotsc, x_m)$. Note that $\mathcal X_{n,m}$ is again a reducible affine algebraic curve.   
\end{definition}

\begin{lemma}[{\cite[Lemma 3.7]{rietsch2001grassmannians}}]\label{l:Toeplitz-polynomial}\label{l:uE}
We have an isomorphism 
\[
\begin{array}{cccc}
u^E:&\mathcal X_{n,m} & \longrightarrow & X_{P_{n-m}}\\
&\{x_1,\dotsc,x_m\}&\mapsto & u_n^E(x_1,\dotsc, x_m).
\end{array}
\]
\end{lemma}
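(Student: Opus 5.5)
We need to show that $u^E$ is well defined, that is, lands in $X_{P_k}$ with $k=n-m$, and that it is a bijective morphism with algebraic inverse. The natural tool is the classical description of lower-left minors of a banded Toeplitz matrix. For $u=u_n(c_1,\dotsc,c_m)$ with $p_u(x)=\prod_{i=1}^m(1+x_ix)$, the lower-left $j\times j$ minor $\Delta_j(u)$ is the Toeplitz determinant $\det(c_{m'-a+b})$ with $m'=n-j$, where $c_0=1$ and $c_r=0$ for $r<0$ or $r>m$. By the dual Jacobi--Trudi identity this is, up to sign, the Schur polynomial $S_{\mu}(x_1,\dotsc,x_m)$ indexed by the conjugate of the rectangle $(n-j)^{j}$. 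Equivalently, $\Delta_j(u)=\pm S_{(j^{\,n-j})}(x_1,\dotsc,x_m)$.

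\textbf{Well-definedness.} Since $S_{(j^{n-j})}$ in $m$ variables vanishes when $n-j>m$, we get $\Delta_j(u)=0$ automatically for $j<n-m=k$. For $j=k$ the partition is the $m\times k$ rectangle, so $\Delta_k=\pm (x_1\cdots x_m)^k\neq0$ because all $x_i\neq0$.

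For $k<j<n$, write $r=n-j<m$; we need $S_{(j^{r})}(x_1,\dotsc,x_m)=0$. Here we use the bialternant formula. The determinant $\det(x_a^{\mu_b+m-b})$ has exponents $j+m-1,\dotsc,j+m-r$ in the first $r$ columns and $m-r-1,\dotsc,0$ in the rest. Since $j=n-r$, the first set is $n+(m-r-1),\dotsc,n$. Using $x_a^n=c$ for a common value $c$, column $b\le r$ becomes $c\,x_a^{m-b-r}\cdot$ — more precisely, the exponent $n+e$ with $e\in\{0,\dotsc,m-r-1\}$ reduces to $c\,x_a^{e}$. These exponents are exactly those occurring among the last $m-r$ columns, and $r\ge1$, $m-r\ge1$, so two columns are proportional and the determinant vanishes. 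Hence $u^E(\mathbf x)\in X_{P_k}$. It is clearly a morphism, and it is invariant under $S_m$ because it depends only on the $E_i(\mathbf x)$.

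\textbf{Inverse.} Given $u=u_n(\mathbf c)\in X_{P_k}$, factor $p_u(x)=\prod(1+x_ix)$ over $\C$. The main obstacle is to show that the conditions $\Delta_j(u)=0$ for $j>k$ force $x_i^n$ to be equal and the $x_i$ distinct and nonzero. Nonvanishing of $\Delta_k=\pm c_m^k$ gives $c_m\ne0$, hence all $x_i\neq0$ and $\deg p_u=m$. For the rest, I would use the generating-function characterisation: $\Delta_{n-1}(u)=0,\dotsc,\Delta_{k+1}(u)=0$ say that $1/p_u(x)$ has vanishing coefficients $h_{n-1},\dotsc,h_{n-m+1}$ (each $\Delta_{n-r}$ is, up to sign, $h_{n-r}$-type data via Jacobi--Trudi duality; inductively in $r$ these give $h_{n-r}=0$ for $1\le r\le m-1$). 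Together with the recursion $\sum_{i}c_ih_{N-i}=0$, this yields $p_u(x)\cdot(h_0+\dots+h_{n-m}x^{n-m})=1+\kappa x^n$ for some constant $\kappa$. Hence $p_u$ divides $1+\kappa x^n$ with $\kappa\ne0$, because $c_m\ne0$ forces $\kappa=c_mh_{n-m}\neq0$. Therefore the $-1/x_i$ are distinct roots of $1+\kappa x^n$, giving distinct nonzero $x_i$ with $x_i^n=-\kappa$ for all $i$.

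Finally, the inverse is a morphism $X_{P_k}\to\mathcal X_{n,m}$ because $\C[\mathcal X_{n,m}]$ is generated by the symmetric functions $E_i(\mathbf x)=c_i$. The two maps are mutually inverse on points and on coordinate rings: $\C[\mathcal X_{n,m}]$ is generated by $E_1,\dotsc,E_m$ subject to relations equivalent to $p(x)\mid 1+\kappa x^n$, which are precisely the minor relations established above. This shows that $u^E$ is an isomorphism.
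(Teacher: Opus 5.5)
The paper does not prove this lemma itself; it is quoted from \cite[Lemma~3.7]{rietsch2001grassmannians}. Your argument therefore has to stand on its own, and its core is correct.

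\textbf{What works.} The identity $\Delta_j(u)=S_{(j^{n-j})}(x_1,\dotsc,x_m)$ holds exactly, with no sign. It gives $\Delta_j=0$ for $j<k$ and $\Delta_k=(x_1\cdots x_m)^k$. For the converse, write $H_N$ for the complete homogeneous polynomials, so $h_N=(-1)^NH_N$. Jacobi--Trudi gives $\Delta_{n-r}(u)=\det(H_{n-r-a+b})_{a,b=1}^{r}$. Once $H_{n-1}=\dots=H_{n-r+1}=0$, this matrix is triangular and the determinant equals $H_{n-r}^{\,r}$. So your induction is sound and leads to $p_u(x)(h_0+\dots+h_{n-m}x^{n-m})=1+\kappa x^n$.

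\textbf{Two steps need correcting.}

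First, in the bialternant the exponents in the first $r$ columns are $n+(m-r-1),\dotsc,n+(m-2r)$, not $n+(m-r-1),\dotsc,n$. If $2r>m$, some of them lie below $n$ and do not reduce to exponents of the last $m-r$ columns. One coincidence suffices, though. The first column $(x_a^{n+m-r-1})_a$ equals $c$ times the $(r+1)$-st column $(x_a^{m-r-1})_a$, and both columns exist because $1\le r\le m-1$.

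Second, $c_m\neq0$ alone does not give $\kappa=c_mh_{n-m}\neq0$. Argue instead that $p_u$ has degree $m\ge1$ and the other factor has constant term $1$, so the product cannot equal $1$. Alternatively, use $\Delta_k=\det(H_{k-a+b})_{a,b=1}^m=H_k^{\,m}=c_m^k\neq0$. Note also that $x_i^n=(-1)^{n+1}\kappa$ rather than $-\kappa$; this is harmless.

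\textbf{The genuine gap is your final paragraph.} The ring $\C[\mathcal X_{n,m}]$ is not generated by $E_1,\dotsc,E_m$: for $m=1$ it is $\C[x_1^{\pm1}]$. Your claim that its relations are ``precisely the minor relations'' is an equality of ideals that you have not proved. You have only shown that the two loci agree as sets.

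One way to close it:
\begin{itemize}
\item Both sides are affine ($X_{P_k}$ is closed in $\{c_m\neq0\}\subseteq\C^m$), and $u^E$ is a morphism that you have shown to be bijective.
\item The cover $\{x_1^n=\dots=x_m^n,\ x_i\neq x_j,\ x_i\neq0\}$ is the principal open subset of $Z=\{x_1^n=\dots=x_m^n\}\subseteq\C^m$ where the invariant $f=E_m\prod_{i<j}(x_i-x_j)^2$ is nonzero.
\item For the finite group $S_m$ in characteristic $0$, taking invariants commutes with passing to $\C[Z]$ and with inverting $f$ (Reynolds operator). Hence $\C[\mathcal X_{n,m}]$ is generated by $E_1,\dotsc,E_m$ and $f^{-1}$.
\item These generators are the pullbacks of $c_1,\dotsc,c_m$ and of $1/(c_mD)$, where $D=\prod_{i<j}(x_i-x_j)^2$ is viewed as a polynomial in $c_1,\dotsc,c_m$. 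Your converse argument shows exactly that $D$ is nowhere zero on $X_{P_k}$.
\item So $(u^E)^*$ is surjective. It is injective because $u^E$ is onto the reduced variety $X_{P_k}$. Hence $u^E$ is an isomorphism.
\end{itemize}
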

\begin{remark}\label{r:example}
We may use this lemma to identify the coordinate ring of $\mathcal X_{n,m}$ with the quantum cohomology ring $qH^*(Gr(k,n),\C)[q_k\inv]$ via Peterson's isomorphism from Theorem~\ref{t:Peterson}, where we recall that $k=n-m$. For example, 
\[
\sigma^{\ydiagram{1}}_{P_k}\qquad\leftrightarrow\qquad\mathfrak S^{\ydiagram{1}}_{P_k}(u^E(x_1,\dotsc, x_m))=\frac{E_{m-1}(\mathbf x)E_m^{k-1}(\mathbf x)}{E_m^{k}(\mathbf x)}=\frac{1}{x_1}+\dotsc+\frac{1}{x_m}.
\]
 \end{remark}
 We now describe the totally positive part of $X_{P_k}$.
\begin{definition}\label{d:zeta}
Let $\boldsymbol{\zeta}_{n,m}^{+} \in\mathcal X_{n,m}$  be the $m-$tuple of $n^{th}$ roots of $(-1)^{m+1}$ given by
\[
    \boldsymbol{\zeta}_{n,m}^{+} \coloneq  \{\zeta_{n}^{j} \,|\, j = 1,\dots,m\}, \qquad \text{with}\qquad  \zeta_{n}^j=\zeta_{n,m}^j = 
    \begin{cases}
      e^{\frac{2\pi i}{n}\left( l-j +1\right)} &: \quad m=2l+1 \\
      e^{-\frac{\pi i}{n}}e^{\frac{2\pi i}{n}\left( l-j+1 \right)}&: \quad m = 2l. 
\end{cases}
  \]
  Given $\boldsymbol{\zeta}_{n,m}^+$, let us write $z\boldsymbol{\zeta}_{n,m}^+\coloneq  \{z\zeta_{n}^{j} \,|\, j = 1,\dots,m\}$ for $z\in\C$. While the individual $\zeta_{n}^{j}$ do also depend on the choice of $m$, we will generally not include the $m$ in the notation, as it will be clear from context.
\end{definition}
\begin{remark}\label{r:rightmost}
  Informally speaking, if $m$ is odd then $\boldsymbol{\zeta}_{n,m}^{+}$ is the set of  ``right-most'' $n^{th}$ roots of unity. If $m$ is even, the set of $m$ right-most roots of $1$ is not well-defined, but we can take the right-most roots of $-1$ for $\boldsymbol{\zeta}_{n,m}^{+}$  instead, and this is again a unique set.  \end{remark}

\begin{theorem}\label{t:nonneg-toep-homeo}\cite[Theorem 8.4(2)]{rietsch2001grassmannians}
The map $ \C\setminus \{0\}\to  {X}_{P_k}(\C)$ defined by $z\mapsto u_n^{E}(z\boldsymbol{\zeta}_{n,m}^+)$, where $m=n-k$, restricts to  give a parametrisation
\[
    u_+=u_+^{(k,n)}\colon\,  \R_{> 0}\xrightarrow{\sim} X_{P_k}(\R_{>0})
 \]
of the totally positive part of $X_{P_k}$ (with analytic inverse).
\end{theorem}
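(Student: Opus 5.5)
Throughout, $m=n-k$ and we work through the isomorphism $u^E:\mathcal X_{n,m}\to X_{P_k}$ of Lemma~\ref{l:uE}. The plan has three steps: show that $z\boldsymbol{\zeta}^+_{n,m}$ lands in $X_{P_k}$ and that $u_+$ is injective and continuous; show that $u_n^E(z\boldsymbol\zeta^+_{n,m})$ is totally nonnegative, hence lies in $X_{P_k}(\R_{>0})$; and show that every point of $X_{P_k}(\R_{>0})$ arises this way, with analytic inverse.

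\emph{Step 1.} The $m$ entries $z\zeta_n^j$ are distinct, nonzero, and have equal $n$-th powers, so $z\boldsymbol\zeta^+_{n,m}\in\mathcal X_{n,m}$. The $\zeta_n^j$ are symmetric under complex conjugation, so the polynomial $p(x)=\prod_j(1+z\zeta_n^jx)$ has real coefficients. Its coefficients $c_i=z^iE_i(\boldsymbol\zeta^+_{n,m})$ are monomial in $z$, so $u_+$ is continuous. Moreover $c_1=z\sum_j\zeta_n^j$, and $\sum_j\zeta_n^j=\sum_j\cos(\theta_j)>0$, because all the arguments lie in $(-\pi/2,\pi/2)$ when $m<n$. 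Hence $z$ is recovered analytically as $c_1/E_1(\boldsymbol\zeta^+)$, which gives injectivity and an analytic inverse on the image.

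\emph{Step 2 (the main obstacle).} We must show total nonnegativity, i.e.\ that every minor of $u_n^E(z\boldsymbol\zeta^+)$ is $\ge0$. Since $\Delta_k\neq0$, the matrix then lies in $X_{P_k}(\R_{>0})$. My first approach is via Theorem~\ref{t:totposGP}: it suffices that every Schubert function $\mathfrak S^\lambda_{P_k}$ is positive at this point. Composing Peterson's isomorphism (Theorem~\ref{t:Peterson}) with Lemma~\ref{l:Ykm}, $\mathfrak S^\lambda_{P_k}$ should correspond to the Schur polynomial $S_\lambda$ evaluated at the $k$ roots of $x^n=c$ complementary to $\{x_j^{-1}\}$, up to normalisation. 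Remark~\ref{r:example} supports this, since there $\sigma^{\square}\mapsto\sum 1/x_j$. For the ``rightmost'' choice $\boldsymbol\zeta^+$, the complementary $k$ points (after sign and inversion) again form a conjugation-symmetric arc of roots centred on the positive real axis, scaled by a positive real. So the task reduces to showing that $S_\lambda$ evaluated on such a symmetric arc $\{e^{i\theta_j}\}$ is positive.

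To prove this positivity, I would use the bialternant formula. Both $\det(e^{i\theta_j(\lambda_l+k-l)})$ and the Vandermonde $\det(e^{i\theta_j(k-l)})$ are, after factoring out a common phase, proportional to real $\sin$-type determinants for equally spaced angles. The quotient is then a principal specialisation: by the $q$-hook formula with $q=e^{2\pi i/n}$, it equals a product $\prod\frac{\sin(\pi(c+h)/n)}{\sin(\pi h/n)}$ over cells. Each factor is positive because the hook lengths and contents satisfy $0<\cdot<n$. This is the step I expect to require the most care: pinning down the correct complementary set and handling the even/odd $m$ cases of Definition~\ref{d:zeta}.

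\emph{Step 3 (surjectivity).} By Theorem~\ref{t:totposGP}, $\Delta_k$ restricts to a homeomorphism $X_{P_k}(\R_{>0})\to\R_{>0}$. Along the curve $u_+$, $\Delta_k(u_+(z))$ is $E_m(z\boldsymbol\zeta^+)^k$ up to sign, hence equal to $(\text{const})\,z^{mk}$. By Step 2 this constant is positive, so $z\mapsto\Delta_k(u_+(z))$ is a bijection $\R_{>0}\to\R_{>0}$. Therefore $u_+$ is a bijection onto $X_{P_k}(\R_{>0})$, and its inverse $u\mapsto(\Delta_k(u)/\text{const})^{1/(mk)}$ is analytic.
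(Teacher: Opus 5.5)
The paper gives no proof of this statement; it imports it from \cite[Theorem~8.4(2)]{rietsch2001grassmannians}, so your argument has to stand on its own. Its architecture is sound. You use \eqref{e:poscharToep} to place $u_+(z)$ in $X_{P_k}(\R_{>0})$ and the $\Delta_k$-homeomorphism of Theorem~\ref{t:totposGP} for bijectivity, and Step~3 works. In fact $E_m(\boldsymbol\zeta^+_{n,m})=1$, so $\Delta_k(u_+(z))=c_m^k=z^{mk}$ exactly.

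\textbf{The gap is in Step~2}, which is the only step whose content you do not borrow.
\begin{itemize}
\item \emph{The key evaluation is guessed, not derived.} The value of $\mathfrak S^\lambda_{P_k}$ at $u^E_n(\mathbf x)$ is only asserted (``should correspond \dots up to normalisation''). It does not follow from composing Theorem~\ref{t:Peterson} with Lemma~\ref{l:Ykm}. That composite is \emph{some} isomorphism $\mathcal X_{n,m}\cong\mathcal Y_{k,m}$, and determining which one is exactly the missing computation. A single example (Remark~\ref{r:example}) does not pin it down. The unspecified normalisation also hides a sign $(-1)^{|\lambda|}$, and positivity depends on that sign.
\item \emph{The product formula is wrong as written.} Take $c$ to be the content and look at the bottom cell of a column $(1^k)$ with $k\ge 2$, where $c+h=2-k$. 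There the factor $\sin(\pi(c+h)/n)$ is zero or negative. The numerator must be $\sin(\pi(N+c(x))/n)$, with $N$ the number of variables.
\end{itemize}

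\textbf{Both points are easily repaired, and the repair removes the need for complementary roots.}
\begin{itemize}
\item By Definition~\ref{d:wminornew}, the $(a,b)$ entry of the minor $\Delta^\lambda_k(u^E_n(\mathbf x))$ is $E_{\mu_b-b+a}(\mathbf x)$, where $\mu=(m-\lambda_k,\dots,m-\lambda_1)$. By the dual Jacobi--Trudi formula, $\Delta^\lambda_k(u^E_n(\mathbf x))=S_{\mu'}(x_1,\dots,x_m)=(x_1\cdots x_m)^kS_{\lambda'}(x_1\inv,\dots,x_m\inv)$. Since $\Delta_k=E_m(\mathbf x)^k$, this gives $\mathfrak S^\lambda_{P_k}(u^E_n(\mathbf x))=S_{\lambda'}(x_1\inv,\dots,x_m\inv)$.
\item This is the identity \eqref{e:strangeeval}, which the paper reaches only later via strange duality. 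It agrees with your complementary-roots guess once the sign is inserted, i.e.\ when $S_\lambda$ is evaluated at $-x_{m+1}\inv,\dots,-x_n\inv$.
\item The set $\boldsymbol\zeta^+_{n,m}=\{e^{\pi i(m-1-2j)/n}\}_{j=0}^{m-1}$ is closed under inversion. The principal specialisation in $m$ variables therefore gives $\mathfrak S^\lambda_{P_k}(u_+(z))=z^{-|\lambda|}\prod_{x\in\lambda'}\sin(\pi(m+c(x))/n)/\sin(\pi h(x)/n)$.
\item For $\lambda'\subseteq m\times k$ one has $1\le m+c(x)\le n-1$ and $1\le h(x)\le n-1$, so every factor is positive.
\end{itemize}

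\textbf{Step~1 also has a false justification.} The arguments $\pi(m-1-2j)/n$ leave $(-\pi/2,\pi/2)$ once $m$ is large, for example when $k=1$. The conclusion $E_1(\boldsymbol\zeta^+_{n,m})=\sin(\pi m/n)/\sin(\pi/n)>0$ still holds, but you do not need it: $c_m=z^m$ already gives injectivity.

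Finally, on this route the hard part of the theorem is entirely supplied by the more general Theorem~\ref{t:totposGP}. That hard part is showing that no point of $X_{P_k}(\R_{>0})$ lies off the curve $u_+$.
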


\begin{remark}\label{r:rightmostpos}
Note that $\boldsymbol{\zeta}_{n,m}^{+}$ is, as a set, preserved by complex conjugation. Therefore the values $E_i(\boldsymbol{\zeta}_{n,m}^{+})$ (and the values $S_\lambda(\boldsymbol{\zeta}_{n,m}^{+})$ of all the  Schur polynomials) are automatically real,   
making it clear that the map in the proposition restricts to a map  $\R\setminus \{0\} \hookrightarrow  {X}_{P_k}(\R)$. While the $E_i(\boldsymbol{\zeta}_{n,m}^{+})$ are additionally positive, this does not follow for more general Schur polynomials  $S_\lambda(\boldsymbol{\zeta}_{n,m}^{+})$ unless  $\lambda\subseteq k\times m$. 
\end{remark}

\begin{lemma}\label{l:qviaparam} 
For an element $u\in X_{P_{k}}$ described as in Lemma~\ref{l:Toeplitz-polynomial} as  $u=u^E_n(x_1,\dotsc, x_m)$, we have that 
\[q(u)=(-1)^{m+1}\frac{1}{x_1^n}
\]
for the Peterson isomorphism from Theorem~\ref{t:Peterson}.
\end{lemma}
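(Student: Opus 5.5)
The plan is to compute $q$ through Schubert classes, not through the factorisation $u=x_L\bar w_0\bar w_P\inv d x_R$. Formula \eqref{e:qviadelta} is not enough on its own here. For $P=P_k$, with $\Delta_0=\Delta_n=1$, it reads $q^{km}=\Delta_k^{-n}$, and this determines $q$ only up to a $km$-th root of unity. Instead I would use the relation $q=X_kY_m$ from the presentation $\Lambda_{k,m}$ in \eqref{e:lambdakm}. Here $X_k=\sigma^{(1^k)}$ (a single column of length $k$) and $Y_m=\sigma^{(m)}$ (a single row of length $m$), by the quantum Giambelli formula \eqref{eq:q-giambelli} and its dual. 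So it suffices to evaluate $\mathfrak S^{(1^k)}_{P_k}$ and $\mathfrak S^{(m)}_{P_k}$ at $u=u^E_n(x_1,\dotsc,x_m)$ and multiply.

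The key step, and the one I expect to be the main obstacle, is the general identity
\[
\mathfrak S^\lambda_{P_k}\big(u^E_n(x_1,\dotsc,x_m)\big)=S_{\lambda'}\Big(\frac1{x_1},\dotsc,\frac1{x_m}\Big),
\]
which generalises Remark~\ref{r:example}. First, the lower-left $k\times k$ corner of $u^E_n(\mathbf x)$ has entries $E_{m+i-j}(\mathbf x)$. It is therefore triangular with $E_m(\mathbf x)$ on the diagonal, which gives $\Delta_k=E_m(\mathbf x)^k$. A general minor $\Delta^\lambda_k$ uses the column set $\{\lambda_{k-j+1}+j\}$ and has entries $E_{m+i-\lambda_{k-j+1}-j}(\mathbf x)$. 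After reversing the order of rows and columns, it becomes a dual Jacobi--Trudi determinant, so it equals $S_\mu(\mathbf x)$, where $\mu'$ is the complement of $\lambda'$ in the $m\times k$ rectangle. The standard complementation identity
\[
S_\mu(x_1,\dotsc,x_m)=(x_1\cdots x_m)^k\,S_{\lambda'}(x_1\inv,\dotsc,x_m\inv)
\]
then gives the claim. I would check the indexing carefully against $\lambda=(1)$ from Remark~\ref{r:example} and against the full rectangle, for which $\mathfrak S^{k\times m}=1/\Delta_k$. For the proof itself, only the two cases $\lambda=(m)$ and $\lambda=(1^k)$ are needed, and these can be verified directly. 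This gives $\mathfrak S^{(m)}=S_{(1^m)}(\mathbf y)=y_1\cdots y_m$ and $\mathfrak S^{(1^k)}=S_{(k)}(\mathbf y)=H_k(\mathbf y)$, where $y_i=1/x_i$.

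Finally, I would use the defining condition of $\mathcal X_{n,m}$. The $y_i$ are $m$ distinct roots of $t^n=y_1^n$; let $w_1,\dotsc,w_k$ be the remaining $k=n-m$ roots. Then
\[
\prod_{i=1}^m(1-y_it)\prod_{j=1}^k(1-w_jt)=1-y_1^nt^n,
\]
so
\[
\sum_r H_r(\mathbf y)t^r=\frac{\prod_j(1-w_jt)}{1-y_1^nt^n}.
\]
Since $k<n$, comparing coefficients of $t^k$ gives $H_k(\mathbf y)=(-1)^kw_1\cdots w_k$. Hence
\[
q=H_k(\mathbf y)\,y_1\cdots y_m=(-1)^k\prod_{\text{all roots}}=(-1)^k(-1)^{n+1}y_1^n=(-1)^{m+1}x_1^{-n},
\]
as claimed. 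As a cross-check, the same result comes from Lemma~\ref{l:Ykm} via the identification $z_{k+1},\dotsc,z_n\leftrightarrow -1/x_1,\dotsc,-1/x_m$. This identification is suggested by $\sigma^\lambda\mapsto S_{\lambda'}(-z_{k+1},\dotsc,-z_n)$ together with the identity above, and it makes $q=(-1)^{k+1}z_n^n$ reduce to $(-1)^{m+1}x_1^{-n}$.
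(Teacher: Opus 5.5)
Your proposal is correct and is essentially the paper's proof. The paper likewise writes $q=\mathfrak S^{(1^k)}_{P_k}\mathfrak S^{(m)}_{P_k}$ and evaluates these as $S_{(k^{m-1})}(\mathbf x)/E_m(\mathbf x)^k$ and $1/E_m(\mathbf x)$ via dual Jacobi--Trudi. It then passes to the complementary $n$-th roots through Lemma~\ref{l:Ykm} and uses $x_1\cdots x_n=(-1)^{n+1}x_1^n$. Your switch to $y_i=x_i\inv$, with a direct generating-function computation of $H_k(\mathbf y)$, is just a more self-contained version of that last step.

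One indexing correction for the general identity, which you do not need here and which the paper only obtains later as \eqref{e:strangeeval} via strange duality. The matrix $\bigl(E_{m+i-\lambda_{k-j+1}-j}(\mathbf x)\bigr)$ is, with no reversal, already the transpose of the dual Jacobi--Trudi matrix for $\mu'=(m-\lambda_k,\dotsc,m-\lambda_1)$. So it is $\mu$ itself, not $\mu'$, that is the complement of $\lambda'$ in the $m\times k$ rectangle, and only then does your complementation identity apply.
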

\begin{remark}
    Restricting to the totally positive part of $X_{P_k}$, the pullback of $q\in \C[X_{P_k}]$ along the parametrisation map $u_+$ from Theorem~\ref{t:nonneg-toep-homeo} is then given by $
   u_+^{*}(q) = \frac{1}{t^{n}}$  in terms of the parameter $t$ in $\R_{>0}$. We note that this special case of Lemma~\ref{l:qviaparam} follows already from $q^{k(n-k)} =\frac{1}{(\Delta_k)^n}$, which is  \eqref{e:qviadelta} in the Grassmannian setting.
\end{remark}
\begin{proof}[{Proof of Lemma~\ref{l:qviaparam}}]
Recall that $q=\sigma_{P_k}^{(1^k)}\star\sigma_{P_k}^{(m)}$, for example by \eqref{e:lambdakm} (where $k=n-m$).

Note that $(1^k)$ denotes the single-column Young diagram which corresponds to the permutation $s_1 s_2\dots s_k$. For the single-row Young diagram $(m)$ the corresponding permutation is $s_{n-1}\dotsc s_{k+1}s_k$.  Consider the functions on $X_{P_k}$  corresponding to $\sigma_{P_k}^{(1^k)}$ and $\sigma_{P_k}^{(m)}$. Namely, these are the following quotients of minors,
\begin{eqnarray*}
\mathfrak S_{P_k}^{(1^k)}(u)&=&\frac{\langle u\cdot v_{2}\wedge\dotsc\wedge v_{k+1},v_{n-k+1}\wedge\dotsc\wedge v_n\rangle}{\Delta_k(u)},\\
\mathfrak S_{P_k}^{(m)}(u)&=&\frac{\langle u\cdot v_{1}\wedge v_{2}\wedge\dotsc\wedge v_{k-1}\wedge v_n,v_{n-k+1}\wedge\dotsc\wedge v_n\rangle}{\Delta_k(u)}.
\end{eqnarray*}
Let us evaluate these functions on  $u^E=u^E(x_1,\dotsc, x_m)\in X_{P_k}$. Here $\{x_1,\dotsc, x_m\}\in\mathcal X_{n,m}$. 
The denominator, in either case, gives $E_m(x_1,\dotsc, x_m)^k$. The minor in the numerator of  $\mathfrak S_{P_k}^{(m)}(u)$ gives $E_m(x_1,\dotsc,x_m)^{k-1}$. The minor in the numerator of  $\mathfrak S^{(1^k)}_{P_k}(u)$ gives the Schur polynomial $S_\lambda$ associated to $\lambda=(m-1)\times k$ evaluated at $x_1,\dotsc, x_m$ (by the (dual) Jacobi-Trudi formula). All in all we have 
\begin{eqnarray*}
\mathfrak S_{P_k}^{(1^k)}(u^E)&=&\frac{S_{(m-1)\times k}(x_1,\dotsc, x_m)}{E_m(x_1,\dotsc, x_m)^k},\\
\mathfrak S_{P_k}^{(m)}(u^E)&=&\frac{1}{E_m(x_1,\dotsc, x_m)}.
\end{eqnarray*}
Now the set of $n$-th roots of $x_1^n$ consists of $x_1,\dotsc, x_m$ together with $k$ further distinct complex numbers, that we denote   $x_{m+1},\dotsc, x_n$. Thus we have constructed a well-defined element $[x_1,\dotsc, x_n]_{m,k}$ in $\mathcal Y_{m,k}$ from Definition~\ref{d:Ykm}. We  now note that $S_{(m-1)\times k}$ is a complicated Schur polynomial in $m$ variables compared to $S_{k\times(m-1)}$  (in $k$ variables), which is just a monomial. But we can apply Lemma~\ref{l:Ykm} to get
\begin{equation*}
S_{(m-1)\times k}(x_1,\dotsc, x_m)=S_{ k\times (m-1)}(-x_{m+1},\dotsc, -x_n)=E_k(-x_{m+1},\dotsc, -x_n)^{m-1}=(-1)^{k(m-1)}(x_{m+1}x_{m+2}\dotsc x_n)^{m-1}.
\end{equation*}
Thereby we obtain a Laurent monomial expression for $q(u^E)$ that we can simplify. Note that 
$x_1\dotsc x_n=(-1)^{n+1}x_1^n$ by the final part of Lemma~\ref{l:Ykm}.
\begin{multline*}
q(u^E)=\mathfrak S_{P_k}^{(1^k)}\mathfrak S_{P_k}^{(m)}(u^E)=\frac{(-1)^{k(m-1)}(x_{m+1}x_{m+2}\dotsc x_n)^{m-1}}{E_m(x_1,\dotsc, x_m)^{k+1}}=\frac{(-1)^{k(m-1)}(x_{m+1}x_{m+2}\dotsc x_n)^{m-1}}{(x_1\dotsc x_m)^{k+1}}\frac{(x_1\dotsc x_m)^{m-1}}{(x_1\dotsc x_m)^{m-1}}
\\ 
=\frac{(-1)^{k(m-1)}(x_{1}\dotsc x_n)^{m-1}}{(x_1\dotsc x_m)^{n}}=(-1)^{(n-m)(m-1)+(n+1)(m-1)}\frac{x_1^{n(m-1)}}{x_1^{nm}}
=(-1)^{(m+1)}\frac{1}{x_1^n}.
\end{multline*}
This is the formula that was to be proved.
\end{proof}
\section{Asymptotics of the Peterson Toeplitz variety for \texorpdfstring{$Gr(n-m,n)$}{Gr(n-m,n)} with fixed \texorpdfstring{$m$}{m}}\label{sec:asymptquantparam}

We now wish to describe the infinite totally nonnegative Toeplitz matrices that arise as limits of elements of the Peterson Toeplitz variety $X^{(n)}_{P_{n-m}}(\R_{>0})$ associated to $Gr(n-m,n)$ as $n\to\infty$ (with $m$ fixed).  
\begin{definition}
Let us write $\mathrm{Toep}_{\infty}(\C)$ for the infinite lower-triangular Toeplitz matrices over $\C$ of the form 
\begin{equation}\label{e:uinfty}
u^{\infty}=u_\infty(c_1,c_2,c_3,\dotsc)=\begin{pmatrix}
 1 &  & &  &        & \\
  c_{1} &1   &    &       & & \\
  c_{2} &c_{1} &  1&  &      &  \\
 \vdots &    \ddots &      \ddots  & \ddots &   &  \\
   &    &       &    \ddots  &  \ddots   &
\end{pmatrix},
\end{equation}
and $\mathrm{Toep}_n(\C)$, for the analogous $n\times n$ Toeplitz matrices $u_n(c_1,\dotsc, c_{n-1})$. We also write $\mathrm{Toep}_{\infty}(\R_{\ge 0})$  and $\mathrm{Toep}_{n}(\R_{\ge 0})$, respectively, for their totally nonnegative parts, where all minors are in $\R_{\ge 0}$. We define a truncation map $[\ ]_n:\mathrm{Toep}_{\infty}(\C)\to \mathrm{Toep}_{n}(\C)$ by setting
\begin{equation*}
{\left[ u^{\infty} \right]}_{n}=\begin{pmatrix}
 1 &  & &  &        & \\
  c_{1} &1   &    &       & & \\
  c_{2} &c_{1} &  \ddots &  &      &  \\
 \vdots &    \ddots &      \ddots  & \ddots &   &  \\
   c_{n-2}&    &       &    c_{1}  &  1    &  \\
  c_{n-1} & c_{n-2}    &     \dots  &   c_{2} &   c_{1}    &1
\end{pmatrix}.
\end{equation*}
Note that truncation restricts to 
\begin{equation}\label{e:trunc}
[\ ]_n:\mathrm{Toep}_{\infty}(\R_{\geq 0})\to \mathrm{Toep}_{n}(\R_{\geq 0}).
\end{equation}
Moreover, we have that $u^{\infty}$ is \emph{totally non-negative} if the truncations ${[u^{\infty}]}_{n}$ are totally non-negative for all $n$. We also use the notation $[u]_n$ for the $n\times n$ truncation of a (large enough) finite matrix.   

\end{definition}

\begin{definition}\label{d:convergence}
We say the sequence ${\left( u^{(n)} \right)}_{n>0}$ of finite totally nonnegative Toeplitz matrices $u^{(n)}\in\mathrm{Toep}_n(\R_{\ge 0})$ converges to $u^{\infty}\in \mathrm{Toep}_{\infty}(\R_{\ge 0})$ if the matrix entries converge individually.\end{definition}

\begin{remark}\label{r:convergence}
Note that $(u^{(n)})_n$ converges to $u^\infty$ 
if and only if $([u^{(n)}]_N)_{n>N}$ converges in $\Toep_N(\C)\cong\C^{n-1}$ for all $N\in\N$.  Equivalently, we could embed all $\Toep_{n}(\R_{\ge 0})$ as well as $\Toep_\infty(\R_{\ge 0})$ into $\R[[x]]$ sending $u$ to $p_u(x)$ and consider $\R[[x]]$ with the product topology (of coefficientwise convergence).
\end{remark}

Recall that we fix $m\in\N$ and consider $X_P^{(n)}=X_{P_{n-m}}^{(n)}$ for the Peterson Toeplitz variety associated to $Gr(n-m,n)$, that is, to the maximal parabolic subgroup $P=P_{n-m}$ in $SL_{n}$ (for all $n>m$). We may now consider sequences ${\left( u^{(n)} \right)}_{n>0}$ of Toeplitz matrices with $u^{(n)}\in X_{P_{n-m}}^{(n)}(\R_{>0})$, and ask when they converge to a totally nonnegative Toeplitz matrix, and also which totally nonnegative Toeplitz matrices arise as limits of such sequences.

\begin{remark}\label{r:Toepnnotinftp}While $X_{P_{n-m}}^{(n)}(\R_{>0})$ lies in $\Toep_n(\R_{\ge 0})$, the elements of   $X_{P_{n-m}}^{(n)}(\R_{>0})$ do not lie in the image of the truncation map \eqref{e:trunc} unless $m=1$. See also the related Remark~\ref{r:rightmostpos}. Indeed, if the natural infinite extension of $u^{(n)}=u_n(c_1,\dotsc,c_m)\in X_{P_{n-m}}(\R_{> 0})$, the Toeplitz matrix $u^\infty=u(c_1,\dotsc, c_m,0,\dotsc)$, compare \eqref{e:uinfty}, is totally nonnegative, then its generating function $1+c_1x+\dotsc+c_m x^m$ must have negative real roots. Namely by \cref{theorem:edrei-infinite}, these roots are given by $-\beta_1\inv,\dotsc,-\beta_m\inv\in\R_{<0}$ in terms of the Schoenberg parameters $\beta_1,\dotsc,\beta_m$. However, as long as $m>1$ we have that any $p_{u^{(n)}}(x)$ necessarily has some \textit{non-real} roots when $u^{(n)}\in X^{(n)}_{P_{n-m}}$, as follows from  Lemma~\ref{l:Toeplitz-polynomial}. 

Note that the Grassmannian setting differs significantly from the full flag variety $P=B$ one in this regard. In the full flag case we  have that the restrictions of any strictly totally positive infinite Toeplitz matrix $u^\infty\in \Toep_{\infty}(\R_{>0})$ will lie in  $X_{B}(\R_{>0})$ (although still not every element of $X_{B}(\R_{>0})$ will arise in this way). In any case, this means that the matrices $u^\infty\in \Toep_\infty(\R_{>0})$ automatically arise as  limits of sequences of elements $u^{(n)}\in X^{(n)}_{B_n}(\R_{>0})$, simply by setting $u^{(n)}:=[u^\infty]_n$.
\end{remark}

\begin{proposition}\label{prop:lim-q-param}
Let $u^{\infty}\in\Toep_\infty(\C)$ be the limit of a sequence ${(u^{(n)})}_{n}$, where $u^{(n)}\in X_{P_{n-m}}^{(n)}(\R_{> 0})$. Then $u^{\infty}\in \Toep_\infty(\R_{\ge 0})$, and its generating function is the polynomial
\[
 p_{u^{\infty}}(x)= 1+ c_1x + \cdots + c_m x^m = {(1+ \beta  x)}^{m},
\]
where $\beta\in \R_{\ge 0}$ is determined by 
\begin{equation}\label{e:qtobeta}
\beta =\lim\limits_{n\to \infty}\sqrt[n]{\frac{1}{q^{(n)}}}.
\end{equation}
Conversely, for any $\beta\in\R_{\ge 0}$ the infinite totally nonnegative Toeplitz matrix with generating function $ {(1+ \beta  x)}^{m}$ is obtained as the limit of a sequence $(u^{(n)})_n$ with $u^{(n)}\in X^{(n)}_{P_{n-m}}(\R_{>0})$. 
\end{proposition}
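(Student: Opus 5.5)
By Theorem~\ref{t:nonneg-toep-homeo}, every $u^{(n)}\in X^{(n)}_{P_{n-m}}(\R_{>0})$ has the form $u^{(n)}=u_n^E(t_n\boldsymbol\zeta^+_{n,m})$ for a unique $t_n\in\R_{>0}$. The proof is therefore a computation with the explicit polynomial
\[
p_{u^{(n)}}(x)=\prod_{j=1}^m\bigl(1+t_n\zeta_n^j x\bigr),\qquad c_i^{(n)}=t_n^i\,E_i(\zeta_n^1,\dotsc,\zeta_n^m).
\]
I would also use the Remark after Lemma~\ref{l:qviaparam}: there $q^{(n)}(u^{(n)})=t_n^{-n}$, so $\sqrt[n]{1/q^{(n)}}=t_n$ exactly. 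The claim about $\beta$ thus reduces to showing that $t_n$ converges and identifying the limit.

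\textbf{Key observation: the roots cluster at $1$.} By Definition~\ref{d:zeta}, each $\zeta_n^j=e^{i\theta_{n,j}}$ has an angle $\theta_{n,j}$ bounded in absolute value by $\pi m/n$ (up to a constant). Hence $\zeta_n^j\to 1$ uniformly in $j$ as $n\to\infty$, since $m$ is fixed. It follows that
\[
E_i(\boldsymbol\zeta^+_{n,m})\to\binom{m}{i}\qquad\text{for } 0\le i\le m,
\]
and these values are real and positive by Remark~\ref{r:rightmostpos}.

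\textbf{Forward direction.} Suppose $u^{(n)}\to u^\infty$ entrywise. Then $c_1^{(n)}=t_nE_1(\boldsymbol\zeta^+_{n,m})$ converges to $c_1$. Since $E_1(\boldsymbol\zeta^+_{n,m})\to m\neq 0$, we get $t_n\to\beta:=c_1/m\ge 0$. Consequently $c_i^{(n)}\to\binom{m}{i}\beta^i$ for $i\le m$, while $c_i^{(n)}=0$ for $i>m$. So $p_{u^\infty}(x)=(1+\beta x)^m$, and $\beta=\lim t_n=\lim\sqrt[n]{1/q^{(n)}}$, which is \eqref{e:qtobeta}.

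Total nonnegativity of $u^\infty$ can be argued in two ways. The first is directly: every minor of $[u^\infty]_N$ is a limit of the corresponding minors of $[u^{(n)}]_N$, and those are $\ge0$ because $u^{(n)}$ is totally nonnegative and truncation to the top-left $N\times N$ block preserves this. The second is via Edrei's theorem (Theorem~\ref{theorem:edrei-infinite}), with $\beta_1=\dotsb=\beta_m=\beta$ and all other parameters zero.

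\textbf{Converse.} Given $\beta\ge 0$, I would choose $u^{(n)}=u^{(k,n)}_+(t_n)$ with $t_n=\beta$ when $\beta>0$, and $t_n=1/n$ when $\beta=0$ (since the parameter must lie in $\R_{>0}$). The same convergence $E_i(\boldsymbol\zeta^+_{n,m})\to\binom mi$ then gives entrywise convergence to the Toeplitz matrix with generating function $(1+\beta x)^m$.

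\textbf{Main obstacle.} The only real care needed is the uniform bound on the angles of $\boldsymbol\zeta^+_{n,m}$, treating the odd case $m=2l+1$ and the even case $m=2l$ separately. In both cases the angles have the form $\frac{2\pi}{n}(l-j+1)$, possibly shifted by $-\frac{\pi}{n}$, with $1\le j\le m$. Their absolute values are therefore at most $\frac{2\pi m}{n}$, which tends to $0$ as $n\to\infty$.
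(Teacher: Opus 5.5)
Your proof is correct and follows essentially the paper's argument. You parametrise $u^{(n)}=u^E_n(t_n\boldsymbol\zeta^+_{n,m})$ via Theorem~\ref{t:nonneg-toep-homeo}, use $\zeta^j_n\to 1$ to obtain $(1+\beta x)^m$ with $\beta=\lim t_n$, read off $\beta$ from $q^{(n)}(u^{(n)})=t_n^{-n}$, and take $t_n=\beta$ or $t_n=1/n$ for the converse. The only differences are minor: you extract $\lim t_n$ from $c_1^{(n)}$ where the paper uses $c_m^{(n)}$ (which equals $t_n^m$), and you also get total nonnegativity of the limit by closedness of the minor conditions, a valid alternative to invoking Edrei's theorem.
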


\begin{proof}
We have that $u^{(n)}$ is of the form $u_n(c^{(n)}_1,\dotsc, c^{(n)}_m,0,\dotsc, 0)$, assuming $n>m$. Therefore, if the sequence ${(u^{(n)})}_{n}$ converges to $u^{\infty}$, then we must have $u^{\infty}=u(c_1,\dots,c_m, 0,0,\dots)$ for some finite sequence $c_1,\dots,c_m\in \R_{\ge 0}$. In particular, the generating series of the infinite Toeplitz matrix $u^\infty$ is a polynomial, namely $p(x)=1+c_1x+\dots + c_m x^m$. Note that by  \cref{theorem:edrei-infinite} the matrix $u^{\infty}$ is in $\Toep_\infty(\R_{\ge 0})$ if and only if $p(x)=(1+\beta_1 x)\dotsc(1+\beta_mx)$ for Schoenberg parameters $\beta_1\ge \beta_2\ge\dotsc\ge \beta_m\ge 0$. Now, by the parametrization result Theorem~\ref{t:nonneg-toep-homeo} for $X_P^{(n)}$ we have $u^{(n)}=u_+^{(n-m,n)}(t_n)=u^{E}_n(t_n\boldsymbol\zeta^+_{n,n-m})$ for some   positive real parameter $t_{n}\in \R_{>0}$. Equivalently,  compare also Lemma~\ref{l:Toeplitz-polynomial}, we have
\begin{equation}\label{e:pun}
p_{u^{(n)}}(x) = 1+c^{(n)}_1 x+c^{(n)}_2 x^2+\dotsc +c^{(n)}_m x^m= \prod_{i=1}^m \left( 1 +  t_n \zeta^i_nx\right ), 
\end{equation}
where the $\zeta^i_n$ are the $n$-th roots of $(-1)^{m+1}$ from Definition~\ref{d:zeta} and $t_n>0$. Now $p(x)$ is the limit of the sequence ${(p_{u^{(n)}}(x))}_{n}$ in ${\R[x]}_{\leq m}$, and in particular $c_{m}^{(n)}$ converges to $c_m$. Therefore  $t_{n}= \sqrt[n]{\left(c^{(n)}_m\right)^k}$ converges to $\beta:=\sqrt[n]{(c_m)^k}$.  
By observing that $\lim\limits_{n\to \infty}\zeta_{n}^{i}=1$, independently of $i$, we see that the factors $\left( 1 +  t_n \zeta^i_nx\right )$ from \eqref{e:pun} all converge to  $\left( 1 +  \beta x\right )$. Therefore $u^\infty$ is totally nonnegative with Schoenberg parameters $\beta_i =\beta$, where $i=1,\dotsc, m$. Finally,  for the quantum parameter $q^{(n)}$ (as function on $X_{P_{n-m}^{(n)}}(\R_{>0})$) we have $q^{(n)}(u^{(n)})=\frac 1{t^n}$, see Lemma~\ref{l:qviaparam}. Therefore $\beta$ is given by $\lim\limits_{n\to \infty}\sqrt[n]{\frac{1}{q^{(n)}}}$. For the reverse implication, we can simply choose $t_n=\beta$ for all $n$ if $\beta>0$, or $t_n=\frac 1 n$ if $\beta=0$ and obtain a sequence $u^{(n)}$ that converges to the infinite Toeplitz matrix $u(\beta,\dots,\beta,0,\dotsc)$ with generating function $(1+\beta x)^m$. 
\end{proof}

\begin{remark}
    In Section~\ref{s:pflag-limits}, Proposition~\ref{p:pflag-q-lim-inf}, we will prove an analogous result in the case of certain partial flag varieties, and we will pose a conjecture for its generalisation, compatible with both limits as well as the result in the full flag case of~\cite{rietsch2025totallypositivetoeplitzmatrices}. 
\end{remark}

\subsection{Asymptotics of Schubert classes}\label{sec:asympt-schub-class}

Throughout this section, let $u^{\infty}$ be the limit of a sequence  ${(u^{(n)})}_{n}$ as in Section~\ref{sec:asymptquantparam}, where $u^{(n)}\in X_{P_{n-m}}^{(n)}(\R_{>0})$. By Proposition~\ref{prop:lim-q-param}, we have that  
\[u^{\infty} = u_{\infty}^{E}(\underbrace{\beta,\dots,\beta}_{m \text{ copies}},0,\dotsc),\quad\text{ for some $\beta\in \R_{\ge 0}$.}
\]
Equivalently, $u^{\infty} = u_{\infty}(E_1(\beta,\dots,\beta),\dots,E_m(\beta,\dots,\beta))$, where $E_i$ is the degree $i$ elementary symmetric polynomial in $m$ variables. Recall the functions $\mathfrak S^{w_\lambda}_{P_{n-m}}=\mathfrak{S}_{P_{n-m}}^{\lambda}$ corresponding to the quantum Schubert classes described in Definition~\ref{d:Swlambda}, see also \eqref{e:Slambda}. 

\begin{theorem}\label{thm:schub-class-lim}
 Suppose the sequence $(u^{(n)})_{n}$ with $u^{(n)}\in X_{P_{n-m}}^{(n)}(\R_{>0})$ converges to the infinite Toeplitz matrix $u^\infty$ with generating function $(1+\beta x)^m$ for $\beta>0$.  Let $\lambda$ be a fixed Young diagram with $\lambda_1\le m$. 
Then 
\[\lim_{n\to\infty}{\left( \mathfrak{S}^{\lambda}_{P_{n-m}}(u^{(n)}) \right)}=S_{\lambda'}\left( \frac{1}{\beta},\dots,\frac{1}{\beta} \right),\] 
where $S_{\lambda'}$ is the Schur polynomial in $m$ variables corresponding to the conjugate partition $\lambda'$ to $\lambda$. 
\end{theorem}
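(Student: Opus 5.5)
The plan is to compute $\mathfrak S^\lambda_{P_{n-m}}$ explicitly in the coordinates of Lemma~\ref{l:uE}, as a Laurent polynomial in the $x_i$ with no dependence on $n$ beyond the variables themselves, and then pass to the limit using the parametrisation from Theorem~\ref{t:nonneg-toep-homeo}. Concretely, I aim to prove that for $k=n-m$ and $u=u^E_n(x_1,\dotsc,x_m)$ with $\{x_1,\dotsc,x_m\}\in\mathcal X_{n,m}$,
\begin{equation*}
\mathfrak S^\lambda_{P_k}(u)=S_{\lambda'}\Big(\frac1{x_1},\dotsc,\frac1{x_m}\Big).
\end{equation*}
Remark~\ref{r:example} already checks this for $\lambda=\ydiagram{1}$.

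First, I will identify the numerator minor $\Delta^\lambda_k(u)$ as a Schur polynomial. By \eqref{e:Deltalambda}, $\Delta^\lambda_k(u)$ is the minor of the Toeplitz matrix with entries $E_{r-s}(\mathbf x)$. Its row set is $\{n-k+1,\dotsc,n\}$ and its column set is $\{\lambda_{k-j+1}+j\}_{j=1}^k$. Such a minor is a determinant $\det\big(E_{a_i-b_j}(\mathbf x)\big)$, so by the dual Jacobi--Trudi formula it equals a Schur polynomial $S_\nu(\mathbf x)$. Reading off the index shifts, $\nu'$ should be the complement of $\lambda$ in the $k\times m$ rectangle, rotated by $180^\circ$. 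Equivalently, $\nu$ is the complement of $\lambda'$ inside the $m\times k$ rectangle. As consistency checks, $\lambda=\emptyset$ gives $\Delta_k=E_m^k=S_{(k^m)}$, and $\lambda=k\times m$ gives $1$. The entries $E_r$ with $r>m$ vanish automatically because there are only $m$ variables, so no truncation issue arises from the finite matrix.

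Second, I will apply the standard complementation identity for Schur polynomials in $m$ variables. If $\nu$ is the complement of $\mu$ in the $m\times k$ rectangle, then
\begin{equation*}
S_\nu(x_1,\dotsc,x_m)=(x_1\cdots x_m)^k\,S_\mu(x_1^{-1},\dotsc,x_m^{-1}).
\end{equation*}
This follows immediately from the bialternant formula by substituting $x_i\mapsto x_i^{-1}$ and reversing the exponents. Taking $\mu=\lambda'$, which has at most $m$ rows since $\lambda_1\le m$, and dividing by $\Delta_k=(x_1\cdots x_m)^k$ gives the displayed formula for $\mathfrak S^\lambda_{P_k}$. I expect this bookkeeping to be the main obstacle: getting the exact index set of the minor, and confirming that the complement is taken of $\lambda'$ and not of $\lambda$. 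I would settle it by first checking the extreme cases and the single box and single column cases against Remark~\ref{r:example} and the proof of Lemma~\ref{l:qviaparam}.

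Third, I pass to the limit. By Theorem~\ref{t:nonneg-toep-homeo}, $u^{(n)}=u^E_n(t_n\boldsymbol\zeta^+_{n,m})$ with $t_n>0$. The proof of Proposition~\ref{prop:lim-q-param} shows $t_n\to\beta$, and $\zeta^i_n\to1$ for each fixed $i\le m$. Hence
\begin{equation*}
\mathfrak S^\lambda_{P_{n-m}}(u^{(n)})=S_{\lambda'}\big((t_n\zeta^1_n)^{-1},\dotsc,(t_n\zeta^m_n)^{-1}\big).
\end{equation*}
This is a fixed polynomial in $m$ variables, independent of $n$, evaluated at points converging to $(1/\beta,\dotsc,1/\beta)$, where $\beta>0$. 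Continuity then yields the claimed limit $S_{\lambda'}(1/\beta,\dotsc,1/\beta)$.
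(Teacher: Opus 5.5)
Your proof is correct. It reaches the same closed formula as the paper's second proof, but by a different and much more elementary route.

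\textbf{What the paper does.} The paper proves the theorem twice. The first proof (Section~4) treats only single-column shapes by hand. It writes the minors for $s_{k-l}\cdots s_k$ as block determinants, notes that they are stable in $n$, and evaluates the limiting ratio $S_{\lambda_{(1)}}/S_{\lambda_{(2)}}$ at equal arguments via an SSYT bijection. It then passes to general $\lambda$ with Bertram's (dual) quantum Giambelli formula and Jacobi--Trudi. The second proof (Section~6) obtains exactly your intermediate identity \eqref{e:strangeeval}, namely $\mathfrak S^\lambda_{P_k}(u^E_n(x_1,\dotsc,x_m))=S_{\lambda'}(x_1^{-1},\dotsc,x_m^{-1})$. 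It gets there through Peterson theory: the comparison map $\mu_{P_k}$, the symmetry $\Psi$ of Lemma~\ref{l:symm}, and the strange-duality identity of Corollary~\ref{c:strange}. That last step in turn rests on Hengelbrock's and Chaput--Manivel--Perrin's results.

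\textbf{What you do.} You bypass all of this with classical symmetric-function identities, and your bookkeeping is right. Transposing the minor gives $\Delta^\lambda_k(u^E_n(\mathbf x))=\det\big(E_{\mu_a-a+b}(\mathbf x)\big)_{a,b=1}^k$ with $\mu=(m-\lambda_k,\dotsc,m-\lambda_1)=PD(\lambda)$. Dual Jacobi--Trudi then gives $S_{\mu'}(\mathbf x)$, and $\mu'$ is the complement of $\lambda'$ in the $m\times k$ box. The complementation identity finishes the computation. Two small points to add:
\begin{itemize}
\item Take $n$ large enough that $\lambda$ has at most $k=n-m$ rows, so that $\lambda'\subseteq m\times k$.
\item The clean justification of $t_n\to\beta$ is that the product of the $\zeta^j_{n,m}$ equals $1$. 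Hence $c^{(n)}_m=t_n^m$, and so $t_n\to c_m^{1/m}=\beta$.
\end{itemize}

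\textbf{Comparison.} Your route is self-contained and gives an exact formula for each $n$ rather than only a limit. It never uses quantum Giambelli. It also shows that the identity holds for every $u^E_n(\mathbf x)$ with all $x_i\neq 0$, not only on $X_{P_{n-m}}$; this explains the $n$-independence seen in Lemma~\ref{lem:schur-k-independent}. The paper's second route buys conceptual content: it exhibits strange duality inside Peterson theory and interprets the $x_i$ as inverted Chern roots of the tautological quotient bundle. It also delivers the fixed-$k$ Theorem~\ref{thm:schub-class-lim-k} from the same computation, through $S_\lambda(-x_{m+1}^{-1},\dotsc,-x_n^{-1})$. With your approach, the fixed-$k$ case does not follow directly, because the number of variables $n-k$ grows. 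You would instead compose with Lemma~\ref{l:symm}, as the paper's first proof does.
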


\begin{remark}
Clearly, $S_{\lambda'}\left( \frac{1}{\beta},\dots,\frac{1}{\beta} \right)=S_{\lambda'}\left( {1},\dotsc,{1} \right)\left(\frac{1}{\beta}\right)^{|\lambda'|}$, and $S_{\lambda'}\left( {1},\dotsc,{1} \right)$ is also the dimension of  the associated irreducible representation $V_{\lambda'}$ of $SL_m$ which can be given directly by the Weyl dimension formula. We have kept the expression in the form of a Schur polynomial value above to emphasise the analogy with Theorem~\ref{t:fullflag}. 
\end{remark}
Let us write $u=u^{(n)}=u_n(c_1,\dotsc, c_m)$ for now, as in \eqref{e:XPk}. Beginning with the simplest case, consider the Schubert class $\sigma^{\ydiagram{1}}_{P_k}$ and its associated function $\mathfrak S_{P_k}^{\ydiagram{1}}=\mathfrak{S}_{P_k}^{s_k}=\Delta^{s_k}_k(u)/{\Delta_k(u)}$, where we write $k$ for $n-m$ when convenient. Here $\Delta_k^{s_k}= \minor^{[n-k+1,n]}_{[k-1]\cap \left\{ k+1 \right\}}$, that is, the minor consisting of the last $k$ rows and the first $k-1$ columns  together with the $(k+1)^{st}$ column. For our $u$ 
this is a $k\times k$ minor of the form 
\begin{equation*}
  \left|\begin{pmatrix}
    c_m & c_{m-1}&\cdots & \\
        &\ddots &\ddots & \vdots\\
        & &c_{m} &c_{m-2} \\
    &\cdots &0 &c_{m-1}
\end{pmatrix}\right|=  c_{m-1}c_m^{k}.
\end{equation*}
Hence, since $\Delta_k(u) = c_m^k$, we have $\mathfrak{S}^{s_{k}}(u) = \frac{c_{m-1}}{c_m}$.

For another example, consider the permutation $s_{k-1}s_k$. We have $D^{s_{k-1}s_k}_k = \minor^{[n-k+1,n]}_{[k-2]\cap [k,k+1]}$. Similarly, we have
\begin{equation*}
 \left|\begin{pmatrix}
    c_m &c_{m-1} &\cdots & & \\
        &\ddots &\ddots & & \vdots \\
        & &c_{m}  &c_{m-1} &c_{m-2} \\
        & &0 & c_{m-1}&c_{m-2}\\
        &\cdots &0 & c_{m}&c_{m-1}
\end{pmatrix}\right|=  c_{m-1}^{2}c_m^{k-2} - c_{m-2}c_m^{k-1}. 
\end{equation*}
Hence $\mathfrak{S}^{s_{k-1}s_{k}}(u)= \frac{c_{m-1}^2}{c_m^2}-\frac{c_{m-2}}{c_m}$. A key observation is that in terms of the $c_i$ these functions do not depend on $n$ if $m$ is fixed. (They would depend on $n$ if we fixed instead $k$.) 

\begin{lemma}\label{lem:schur-k-independent}
 Fix $m$ and let $n>m$, setting $k=n-m$. Then $\mathfrak{S}_{P_{k}}^{s_{k-l} \cdots s_{k}}(u^{(n)})$ is a polynomial in the entries $c_1,\dotsc, c_m$ of $u^{(n)}$ that is stable, in the sense that it does not depend on $n$. 
\end{lemma}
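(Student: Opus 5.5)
The plan is to compute the minor $\Delta_k^{s_{k-l}\cdots s_k}(u)$ directly and to show that it equals $c_m^{k-l-1}$ times a polynomial in $c_1,\dotsc,c_m$ that does not involve $k$. Dividing by $\Delta_k(u)=c_m^k$ then gives a polynomial in the $c_i$ and $c_m^{-1}$ whose form is independent of $n$. (The statement says "polynomial"; the examples above, such as $c_{m-1}/c_m$, show that the expected output really lives in $\C[c_1,\dotsc,c_m,c_m^{-1}]$, and I would prove stability in that sense.)

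First I identify the columns. The permutation $w=s_{k-l}\cdots s_k$ is Grassmannian with descent $k$ and corresponds to the single-column diagram $\lambda=(1^{l+1})$. By \eqref{e:wlambda}, its first $k$ values are $\{1,\dotsc,k-l-1\}\cup\{k-l+1,\dotsc,k+1\}$, i.e. $[k+1]\setminus\{k-l\}$. So $\Delta^w_k$ is the minor of $u$ on rows $[n-k+1,n]=[m+1,n]$ and columns $[k+1]\setminus\{k-l\}$.

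Next I use the triangular structure. The entry of $u$ in row $m+i$ and column $j$ is $c_{m+i-j}$, with $c_0=1$ and $c_r=0$ for $r>m$ or $r<0$. For the first $k-l-1$ columns $j$, the entry in row $m+i$ vanishes when $i<j$, and the diagonal entries $i=j$ are all equal to $c_m$. The submatrix therefore has block lower-triangular shape: an upper-left $(k-l-1)\times(k-l-1)$ block which is triangular with diagonal $c_m$, and zeros above it in the relevant positions. After a Laplace expansion along these columns, or equivalently reading the matrix as block triangular with rows reordered, we get
\[
\Delta^w_k(u)=c_m^{\,k-l-1}\cdot D_l,
\]
where $D_l$ is the $(l+1)\times(l+1)$ minor on the last $l+1$ rows and columns $k-l+1,\dotsc,k+1$. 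Its entries are $c_{m+i-j}$ with $i-j$ ranging over a window that depends only on $l$ and $m$, namely $D_l=\det(c_{m-1+a-b})_{a,b=1}^{l+1}$ after reindexing. I need to check the block structure carefully: the rows of the last block must not interact with the zero pattern. This is where I expect the main (though mild) obstacle, since it depends on confirming that the entries to the upper right of the first block vanish, which uses $k-l-1<$ the row index offsets. This is fine once $k>l$, i.e. $n$ large enough for $w$ to make sense, and the small examples above confirm the pattern.

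Finally, I conclude
\[
\mathfrak S^{w}_{P_k}(u)=\frac{D_l(c_1,\dotsc,c_m)}{c_m^{\,l+1}},
\]
which is visibly independent of $k$, hence of $n$. As a consistency check, $l=0$ gives $c_{m-1}/c_m$ and $l=1$ gives $(c_{m-1}^2-c_{m-2}c_m)/c_m^2$, matching the computations above.
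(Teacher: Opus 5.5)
Your argument is the same as the paper's. The paper writes $\Delta_k^{s_{k-l}\cdots s_k}$ as the determinant of a block triangular matrix $\begin{pmatrix}A&B\\0&C\end{pmatrix}$, with $\det A=c_m^{k-l-1}$ and $C=(c_{m-1+a-b})_{a,b=1}^{l+1}$. It concludes $\mathfrak S^{s_{k-l}\cdots s_k}(u)=\det C/c_m^{l+1}$. Your remark that this is a Laurent polynomial in $\C[c_1,\dotsc,c_m,c_m^{-1}]$ rather than a polynomial applies equally to the paper's formula.

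One correction is needed in your verification step, where the zero pattern is reversed:
\begin{itemize}
\item Since $c_r=0$ for $r>m$, the entry $c_{m+i-j}$ vanishes when $i>j$, not when $i<j$.
\item So the upper-left block is upper triangular. The zero block is the lower-left one: the last $l+1$ rows against the first $k-l-1$ columns, where $i\ge k-l>j$. The matrix is therefore block upper triangular, not block lower triangular.
\item The upper-right block does not vanish in general. For example, the entry in row $m+k-l-1$, column $k-l+1$ is $c_{m-2}$. The check you planned, that the entries to the upper right of the first block vanish, would therefore fail as phrased.
\end{itemize}

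This does not affect the conclusion. $\det A\cdot\det C$ is also the determinant of a block upper triangular matrix. Your Laplace expansion along the first $k-l-1$ columns also goes through with the correct zero pattern, since only the row set $\{1,\dotsc,k-l-1\}$ contributes. Your formula $D_l/c_m^{l+1}$ stands.
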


\begin{proof}
 In general, $\Delta_k^{s_{k-l}\cdots s_k}= \minor^{[n-k+1,n]}_{[k-1-1]\cap [k-l+1,k+1]}$, i.e.\ the minor consisting of the last $k$ rows and the first $k+1$ columns with the $l^{th}$ column removed. We can write $\Delta_k^{s_{k-l}\cdots s_k}$ as the determinant of the block matrix $\begin{pmatrix}A&B\\ 0 & C\end{pmatrix}$, where $A$ and $C$ are $(k-l-1)\times( k-l-1)$ and $(l+1)\times( l+1)$ matrices respectively given by \[
   A =  \begin{pmatrix}
     c_{m} &c_{m-1} &\cdots & \\
           & c_{m}&\ddots & \vdots\\
           & & \ddots&c_{m-1} \\
         0  & & & c_{m}
\end{pmatrix}, \qquad C= \begin{pmatrix}
 c_{m-1} &c_{m-2} &\cdots & \\
      c_{m}     & c_{m-1}&& \vdots\\
           &\ddots\ \quad \ \  & \ddots\qquad\ \ & \vdots\\
          &\qquad\ddots&\quad \ddots&c_{m-2}\\
         0  & &c_{m}\quad & c_{m-1}
\end{pmatrix}.
 \]
 Since $\det(A) = {(c_m)}^{k-l-1}$, we have \[
  \mathfrak{S}^{s_{k-l}\cdots s_k}(u)= \frac{\det(A)\det(C)}{\Delta^{k}}=\frac{ {(c_m)}^{k-l-1}\det(C)}{{(c_m)}^{k}} = \frac{\det(C)}{{(c_m)}^{l+1}},
\]
which does not depend on $k$ (or equivalently, $n$).
\end{proof}

We now write $u^{(n)}=u_n(c_1^{(n)},\dotsc, c_m^{(n)})$ and make the assumption that these matrices converge to $u^\infty=u^\infty(c_1,\dotsc, c_m)$. Recall that we have by Proposition~\ref{prop:lim-q-param} that  $c_{m-r} = \binom{m}{r}\beta^{m-r}=E_{m-r}(\underbrace{\beta,\dotsc, \beta}_m)$ for some $\beta\in\R_{\ge 0}$.

\begin{lemma}\label{lem:one-strip-schub-lim}
 Given a converging sequence with $u^{(n)} \to  u^{\infty}$, for $w =s_{n-m-l}\cdots s_{n-m}$ we have  $\lim\limits_{n\to \infty}(\mathfrak{S}^{w}(u^{(n)})) = H_{l+1}(\frac{1}{\beta}, \dots,\frac{1}{\beta})$, where $H_{l+1}$ is the degree $l+1$ complete elementary symmetric polynomial in $m$ variables. 
\end{lemma}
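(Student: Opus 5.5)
Lemma~\ref{lem:schur-k-independent} gives the $n$-independent expression $\mathfrak S^{s_{k-l}\cdots s_k}(u^{(n)})=\det(C^{(n)})/(c^{(n)}_m)^{l+1}$, where $C^{(n)}$ is the $(l+1)\times(l+1)$ Hessenberg matrix with $c_{m-1}^{(n)}$ on the diagonal, $c_m^{(n)}$ on the subdiagonal and $c_{m-j}^{(n)}$ on the $(j-1)$-st superdiagonal (with $c_i=0$ for $i<0$ and $c_0=1$). This is a fixed polynomial in $c_1^{(n)},\dotsc,c_m^{(n)}$ divided by a power of $c_m^{(n)}$. By Proposition~\ref{prop:lim-q-param}, $c_i^{(n)}\to c_i=\binom{m}{i}\beta^i$, and $c_m=\beta^m>0$ because $\beta>0$. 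The quotient is therefore continuous at the limit point, so the limit equals $\det(C)/c_m^{l+1}$ evaluated at $c_i=E_i(\beta,\dotsc,\beta)$, the elementary symmetric polynomials in $m$ copies of $\beta$.

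It remains to identify this as a symmetric function. Write $c_i=E_i(\mathbf x)$ with $\mathbf x=(x_1,\dotsc,x_m)$ generic, and set $y_i=1/x_i$. Then
\[
E_{m-i}(\mathbf x)/E_m(\mathbf x)=E_i(\mathbf y).
\]
Dividing each row of $C$ by $c_m$ turns $C/c_m$ into the matrix with entries $E_1(\mathbf y)$ on the diagonal, $1$ on the subdiagonal and $E_{j}(\mathbf y)$ on the $(j-1)$-st superdiagonal. This is the standard Hessenberg (Jacobi--Trudi type) determinant expressing $H_{l+1}$ in terms of elementary symmetric functions, so
\[
\det\begin{pmatrix}E_{j-i+1}(\mathbf y)\end{pmatrix}_{i,j=1}^{l+1}=H_{l+1}(\mathbf y).
\]
I would justify this either by citing it as the dual Jacobi--Trudi identity for the one-row shape $(l+1)$, or by expanding along the first column to get the Newton-type recursion
\[
\sum_{j}(-1)^{j}E_j(\mathbf y)H_{r-j}(\mathbf y)=0.
\]
As a check, the cases $l=0,1$ match the paper's computations: $c_{m-1}/c_m=E_1(\mathbf y)$ and $c_{m-1}^2/c_m^2-c_{m-2}/c_m=E_1^2-E_2=H_2$. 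Specialising $\mathbf x=(\beta,\dotsc,\beta)$ then gives the limit $H_{l+1}(1/\beta,\dotsc,1/\beta)$.

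The main obstacle is bookkeeping: the row and column order and the signs in $C$ must be fixed correctly so that the determinant is $H_{l+1}$ rather than $E_{l+1}$ or a signed variant. I would settle the conventions with the small cases $l=0,1$ and then run the cofactor-expansion induction on $l$. The only analytic input is $\beta>0$, which keeps the denominator $c_m^{l+1}$ away from zero.
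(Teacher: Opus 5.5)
Your proof is correct, but it identifies the determinant differently from the paper. The paper specialises first. It reads $\det C$ and $\det A=c_m^{l+1}$ at $c_i=E_i(\beta,\dots,\beta)$ as the rectangular Schur polynomials $S_{(l+1)^{m-1}}$ and $S_{(l+1)^{m}}$ via Jacobi--Trudi, and uses that at equal arguments $S_\lambda(\beta,\dots,\beta)=|\mathsf{SSYT}(\lambda)|\,\beta^{|\lambda|}$. It then shows, by a bijection recording the missing entry of each column, that $|\mathsf{SSYT}((l+1)^{m-1})|=|\mathsf{SSYT}((l+1))|$ on the alphabet $[m]$. You instead rescale the rows of $C$ using $E_{m-i}(\mathbf x)/E_m(\mathbf x)=E_i(\mathbf x^{-1})$, which lands directly on the dual Jacobi--Trudi matrix $(E_{j-i+1}(\mathbf y))_{i,j}$ of the one-row shape, so no tableau counting is needed.

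What your route buys is the generic identity $\det C/c_m^{l+1}=H_{l+1}(x_1^{-1},\dots,x_m^{-1})$ whenever $c_i=E_i(\mathbf x)$. This is the complement duality $S_{(l+1)^{m-1}}(\mathbf x)=(x_1\cdots x_m)^{l+1}H_{l+1}(\mathbf x^{-1})$, which the paper's counting only verifies at $\mathbf x=(\beta,\dots,\beta)$. Applied with $x_j=t_n\zeta^j_{n,m}$, it gives an exact formula for $\mathfrak S^w(u^{(n)})$ at every finite $n$. That formula is the one-column case of \eqref{e:strangeeval}, which the paper reaches only later through strange duality. The paper's route, in exchange, is purely combinatorial once the Jacobi--Trudi identification is made.

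One small slip: the recursion $\sum_j(-1)^jE_jH_{r-j}=0$ comes from expanding along the first row, not the first column. Each complementary minor is then block triangular, with a unitriangular block and a copy of the smaller Hessenberg matrix. Your citation of dual Jacobi--Trudi already covers this step anyway.
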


\begin{proof}
  Let $u^{(n)} = u(c_1^{(n)},\dots, c_m^{(n)})$, and define, similarly to  Lemma~\ref{lem:schur-k-independent}, the determinants \[
   \mathsf{A}_l(u^{(n)})\coloneq \det \underbrace{
    \begin{pmatrix}
c^{(n)}_{m} &c^{(n)}_{m-1} &\cdots & \\
        & c^{(n)}_{m}&\ddots & \vdots\\
           & & \ddots&c^{(n)}_{m-1} \\
         0  & & & c^{(n)}_{m}
\end{pmatrix}
}_{(l+1)\times(l+1)}\qquad \mathsf{C}_l(u^{(n)})\coloneq  \det \underbrace{
    \begin{pmatrix}
c^{(n)}_{m-1} &c^{(n)}_{m-2} &\cdots & \\
      c^{(n)}_{m}     & c^{(n)}_{m-1}&\ddots & \vdots\\
           &\ddots & \ddots&c^{(n)}_{m-2} \\
         0  & &c^{(n)}_{m} & c^{(n)}_{m-1}
\end{pmatrix}.
}_{(l+1)\times(l+1)}
  \]

  It follows that $\lim\limits_{n\to \infty}\mathfrak{S}^{w}(u^{(n)}) = \frac{\mathsf{C}_l(u^{\infty})}{\mathsf{A}_l(u^{\infty})}$. Moreover, $c_i^{(n)}$ converges to $E_i(\beta,\dots,\beta)$, which we shorthand with $E_i(\beta^{(m)})$,  so \[
    \mathsf{C}_l(u^{\infty})=  \det 
    \begin{pmatrix}
E_{m-1} (\beta^{(m)})&E_{m-2}(\beta^{(m)}) &\cdots & \\
      E_{m}(\beta^{(m)})     & E_{m-1}(\beta^{(m)})&\ddots & \vdots\\
           &\ddots & \ddots&E_{m-2}(\beta^{(m)}) \\
         0  & &E_{m}(\beta^{(m)}) & E_{m-1}(\beta^{(m)})
\end{pmatrix}
  \]

  Noticing that the degrees of the $E_{i}$s are decreasing by one going up the columns, and that the entries are $0$ below the $E_m$ entry in each column, we see that this is the transpose of the Jacobi-Trudi determinant formula for the Schur polynomial $S_{\lambda_1}(
  \beta,\dots,\beta
  )=S_{\lambda_{(1)}}(\beta^{(m)})$ associated to the Young diagram $\lambda_{(1)}$, with $m-1$ rows of length $l+1$. By a similar argument, $\mathsf{A}_l(u^{\infty})$ is the Schur polynomial for a Young diagram $\lambda_{(2)}$ with $m$ rows of length $l+1$. A well-known property of the Schur polynomial (in $m$ variables) associated to a Young diagram $\lambda$ is that the degrees of the terms are the total number of boxes, and they are indexed by the semi-standard Young tableau of shape $\lambda$. That is, a filling of $\lambda$ with entries in the set $\left\{ 1,\dots,m \right\}$ so that entries increase weakly across rows and strictly down columns. Let $\mathsf{SSYT} (\lambda)$ denote the set of all SSYTs of shape $\lambda$.

  Recall that $S_{\underbrace{\ydiagram{2}\cdots\ydiagram{1}}_{l+1}} = H_{l+1}$. We now claim that 
  \begin{equation}\label{eqn:shur-inv-equality}
 \frac{S_{\lambda_{(1)}}(\beta,\dots,\beta)}{S_{\lambda_{(2)}}(\beta,\dots,\beta)} = S_{\underbrace{\ydiagram{2}\cdots\ydiagram{1}}_{l+1}}\left(\frac{1}{\beta},\dots,\frac{1}{\beta}\right).
\end{equation}

  Since all the variables of each Schur polynomial are set to $\beta$, we have
  \begin{align*}
    \frac{S_{\lambda_{(1)}}(\beta,\dots,\beta)}{S_{\lambda_{(2)}}(\beta,\dots,\beta)} & = \frac{\sum\limits_{T\in \mathsf{SSYT}(\lambda_1)}\beta^{(m-1)(l+1)}}{\sum\limits_{T\in \mathsf{SSYT}(\lambda_2)}\beta^{(m)(l+1)}},\\
     &= \frac{1}{\beta^{m(l+1)}}\sum\limits_{T\in \mathsf{SSYT}(\lambda_1)}\beta^{(m-1)(l+1)}, \\
    &= \sum\limits_{T\in \mathsf{SSYT}(\lambda_1)}\frac{1}{\beta^{(l+1)}}.
\end{align*}
The second equality is due to the fact that $\left|  \mathsf{SSYT}(\lambda_{(2)}) \right| = 1$.  We now claim that $\left|  \mathsf{SSYT}(\lambda_{(1)}) \right|= |  \mathsf{SSYT}(\underbrace{\ydiagram{2}\cdots\ydiagram{1}}_{l+1}) |$. An element of $\mathsf{SSYT}_{\lambda_{(1)}}$ is a strictly decreasing  filling of $l+1$ columns of length $m-1$, so is uniquely determined by which element of $\left\{ 1,\dots,m \right\}$ is missing from the column. In order for the rows to be weakly increasing to the right, the missing element in a column must be less than or equal to the missing element from the previous column to the left. Hence we get a weakly decreasing sequence of numbers in $[m]$,  whose reverse gives a SSYT for the strip of $l+1$ boxes. Hence we have  a set-theoretic bijection between $\mathsf{SSYT}(\lambda)$ and $\mathsf{SSYT}(\underbrace{\ydiagram{2}\cdots\ydiagram{1}}_{l+1})$. The equality~\eqref{eqn:shur-inv-equality} follows. 
\end{proof}

\begin{proof}[Proof of Theorem~\ref{thm:schub-class-lim}]
  The case of a Young diagram  with one column is given by Lemma~\ref{lem:one-strip-schub-lim}, since the permutation $s_{n-m-\ell}\cdots s_{n-m}$ corresponds to the partition $(1^{\ell+1})={(1,\dotsc,1)}$ with $\ell+1$ parts. We use the quantum Giambelli formula due to Bertram~\cite{BERTRAM1997289}. Let $\lambda = (\lambda_1,\dots,\lambda_r )$ be a Young diagram with $\lambda\subseteq k\times m$, and write $\lambda'= ( \lambda'_1,\dots,\lambda'_j )$ for the conjugate partition. Let us furthermore write $\sigma^{(\lambda_i')^t}$ for the quantum Schubert class $\sigma^{(1,\dotsc,1)}$ associated to the Young diagram $(1^{\lambda_i'})$ with $\lambda_i'$ parts. Then by the (dual) quantum Giambelli formula we have

    \begin{equation}
  \label{eq:q-giambelli}
   \sigma^{\lambda} = \det 
    \begin{pmatrix}
      \sigma^{(\lambda'_{1})^t} &  \sigma^{(\lambda'_{1} + 1)^t} & \sigma^{(\lambda'_{1}+2)^t} &\cdots & \sigma^{(\lambda'_{1} + r-1)^t} \\
                          \sigma^{(\lambda'_{2}-1)^t}  & \sigma^{(\lambda'_{2})^t} & & & \\
                         \sigma^{(\lambda'_{3}-2)^t}   &   \sigma^{(\lambda'_{3}-1)^t}& \sigma^{(\lambda'_{3})^t}& & \\
                          \vdots &  & &\ddots &\vdots \\
                       \sigma^{(\lambda'_{r}- r+1)^t}     &  &\cdots & & \sigma^{(\lambda'_{r})^t}
\end{pmatrix},
  \end{equation}
and hence we have a sequence of determinants given by \[
  \mathfrak{S}^{\lambda}(u^{(n)}) = \det 
    \begin{pmatrix}
       \mathfrak{S}^{(\lambda'_{1})^t}(u^{(n)}) &   \mathfrak{S}^{(\lambda'_{1} + 1)^t}(u^{(n)}) &  \mathfrak{S}^{(\lambda'_{1}+2)^t}(u^{(n)}) &\cdots &  \mathfrak{S}^{(\lambda'_{1} + r-1)^t}(u^{(n)}) \\
                           \mathfrak{S}^{(\lambda'_{2}-1)^t} (u^{(n)}) &  \mathfrak{S}^{(\lambda'_{2})^t}(u^{(n)}) & & & \\
                          \mathfrak{S}^{(\lambda'_{3}-2)^t}(u^{(n)})   &    \mathfrak{S}^{(\lambda'_{3}-1)^t}(u^{(n)})&  \mathfrak{S}^{(\lambda'_{3})^t}(u^{(n)})& & \\
                          \vdots &  & &\ddots &\vdots \\
                        \mathfrak{S}^{(\lambda'_{r}- r+1)^t}  (u^{(n)})   &  &\cdots & &  \mathfrak{S}^{(\lambda'_{r})^t}(u^{(n)}), 
\end{pmatrix}
\]
where $\mathfrak{S}^{(\ell)^t}(u^{(n)}) = \mathfrak{S}^{s_{n-m-(\ell-1)}\cdots s_{n-m}}(u^{(n)})$. Applying Lemma~\ref{lem:one-strip-schub-lim} it follows that
\begin{align*}
 \lim \limits_{n\to \infty}\mathfrak{S}^{\lambda}(u^{(n)}) = \det \begin{pmatrix}
      H_{\lambda_{1}'}(\frac{1}{\beta}, \dots,\frac{1}{\beta}) &  H_{\lambda_{1}' + 1}(\frac{1}{\beta}, \dots,\frac{1}{\beta}) & H_{\lambda_{1}'+2}(\frac{1}{\beta}, \dots,\frac{1}{\beta}) &\cdots & H_{\lambda_{1}'+r-1}(\frac{1}{\beta}, \dots,\frac{1}{\beta}) \\
                         H_{\lambda_{2}'-1}(\frac{1}{\beta}, \dots,\frac{1}{\beta}) & H_{\lambda_{2}'} (\frac{1}{\beta}, \dots,\frac{1}{\beta})& & & \\
                         H_{\lambda_{3}'-2} (\frac{1}{\beta}, \dots,\frac{1}{\beta})  &   H_{\lambda_{3}'-1}(\frac{1}{\beta}, \dots,\frac{1}{\beta})& H_{\lambda_{3}'}(\frac{1}{\beta}, \dots,\frac{1}{\beta}) & & \\
                          \vdots &  & &\ddots &\vdots \\
                       H_{\lambda_{r}'-r+1}  (\frac{1}{\beta}, \dots,\frac{1}{\beta})   &  &\cdots & & H_{\lambda_{r}'}(\frac{1}{\beta}, \dots,\frac{1}{\beta}).
\end{pmatrix}
\end{align*}
This is just the  Jacobi-Trudi formula expressing the Schur polynomial $S_{\lambda'}$ in terms of complete symmetric polynomials. So we can conclude that \[
   \lim \limits_{n\to \infty}\mathfrak{S}^{\lambda}(u^{(n)}) = S_{\lambda'}\left(\frac{1}{\beta}, \dots,\frac{1}{\beta}\right). 
\]
\end{proof}

\section{Asymptotics of the Peterson Toeplitz variety for $Gr(k,n)$ with fixed $k$}\label{sec:asymptquantparam2}

 Let $X_{P_{k}}^{(n)}$ denote the Peterson Toeplitz variety associated to $Gr(k,n)$, where now we will fix $k$. In this section, we show the following proposition, which will follow from  Proposition~\ref{prop:lim-q-param}  by applying a natural symmetry operation related to the isomorphism $Gr(n-m,n)\cong Gr(m,n)$. 
\begin{proposition}\label{prop:lim-q-param2}
Let $u^{\infty}\in\Toep_\infty(\C)$ be the limit of a sequence ${(u^{(n)})}_{n}$, where $u^{(n)}\in X_{P_{k}}^{(n)}(\R_{> 0})$. Then $u^{\infty}$ lies in $\Toep_\infty(\R_{\ge 0})$ and its generating function is given by
\begin{equation}\label{e:palpha}
 p_{u^{\infty}}(x)= \frac{1} {(1- \alpha  x)^{k}},
\end{equation}
where $\alpha$ is determined by 
\begin{equation}\label{e:qalpha}\alpha =\lim\limits_{n\to \infty}\sqrt[n]{\frac{1}{q^{(n)}(u^{(n)})}}.
\end{equation} Conversely, for any $\alpha\in\R_{\ge 0}$ the infinite totally nonnegative Toeplitz matrix with generating function $\frac{1} {(1- \alpha  x)^{k}}$ is obtained as the limit of a sequence $(u^{(n)})_n$ with $u^{(n)}\in X^{(n)}_{P_{k}}(\R_{>0})$. 
\end{proposition}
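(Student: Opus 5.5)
The plan is to transport Proposition~\ref{prop:lim-q-param} along the involution $\theta\colon u\mapsto \bar w_0\inv (u\inv)^T\bar w_0$. By Remark~\ref{r:iotaequiv}, on Toeplitz matrices $\theta$ coincides with $\iota$ and acts on generating functions by $p(x)\mapsto 1/p(-x) \bmod x^{n}$.

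\textbf{Step 1: properties of $\theta$.} We check three things.
\begin{itemize}
\item $\theta$ preserves total nonnegativity. It is a positivity-preserving involution, and in any case minors of $u\inv$ are, up to the sign pattern removed by conjugating with $\bar w_0$, complementary minors of $u$ (Jacobi).
\item $\theta$ maps $X^{(n)}_{P_k}$ isomorphically onto $X^{(n)}_{P_{n-k}}$, and hence maps totally positive parts onto each other. This follows from Lemma~\ref{l:lambda'} with $\lambda=\emptyset$: we get $\Delta_{n-k}(\theta(u))=\Delta_k(u)$ up to sign, and similarly $\Delta_j\leftrightarrow\Delta_{n-j}$ for all $j$. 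So the vanishing pattern defining $X_{P_k}$ is carried to the one defining $X_{P_{n-k}}$.
\item $\theta$ matches quantum parameters: $q^{(n)}_{P_{n-k}}(\theta(u))=q^{(n)}_{P_k}(u)$. This holds because $q^{k(n-k)}=\Delta_k^{-n}$ by \eqref{e:qviadelta}, both $q$'s are positive on the positive parts, and the $\Delta$'s match under $\theta$. Alternatively, it follows from Lemma~\ref{l:lambda'} applied to $\mathfrak S^{(1^k)}\mathfrak S^{(m)}$.
\end{itemize}

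\textbf{Step 2: the forward direction.} Let $u^{(n)}\in X^{(n)}_{P_k}(\R_{>0})$ converge to $u^\infty$, and set $v^{(n)}=\theta(u^{(n)})\in X^{(n)}_{P_{n-k}}(\R_{>0})$. This is the fixed-$m$ case with $m=k$.

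Coefficientwise convergence passes through $\theta$. The first $N$ coefficients of $1/p(-x)$ are polynomials in the first $N$ coefficients of $p$, and the truncation mod $x^n$ is harmless for $n>N$. Hence $v^{(n)}$ converges to the matrix with generating function $1/p_{u^\infty}(-x)$.

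By Proposition~\ref{prop:lim-q-param}, this limit has generating function $(1+\beta x)^k$ with $\beta=\lim_n \sqrt[n]{1/q^{(n)}(v^{(n)})}$. Inverting, we get $p_{u^\infty}(x)=1/(1-\beta x)^k$, and the limit formula for $\alpha:=\beta$ follows from Step 1. Total nonnegativity of $u^\infty$ then follows from Edrei's theorem, or from the fact that it is a limit of totally nonnegative matrices.

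\textbf{Step 3: the converse.} Given $\alpha\ge 0$, take the sequence $v^{(n)}\in X^{(n)}_{P_{n-k}}(\R_{>0})$ provided by Proposition~\ref{prop:lim-q-param} with $\beta=\alpha$, and set $u^{(n)}:=\theta(v^{(n)})$. By the continuity argument of Step 2, $u^{(n)}$ converges to the matrix with generating function $1/(1-\alpha x)^k$.

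The main obstacle is Step 1, specifically verifying rigorously that $\theta$ carries $X_{P_k}(\R_{>0})$ onto $X_{P_{n-k}}(\R_{>0})$ and matches the quantum parameters, with all signs handled correctly. My first approach is to use the explicit form $u=u^E_n(t\boldsymbol\zeta^+_{n,n-k})$ from Theorem~\ref{t:nonneg-toep-homeo}. Then $1/p_u(-x)\bmod x^n$ equals $\prod_{j}(1+y_jx)$ over the complementary $n$-th roots $y_j$, via the identity $\prod_{i=1}^n(1+z_ix)=1+(\prod z_i)x^n$ from Lemma~\ref{l:Ykm}. This shows directly that $\theta(u)=u^E_n(t\boldsymbol\zeta^+_{n,k})$ for the same $t$. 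In particular $q=t^{-n}$ on both sides by Lemma~\ref{l:qviaparam}, which settles the matching of quantum parameters at once.
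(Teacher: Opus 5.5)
Your proposal is correct and is essentially the paper's argument. Your $\theta$ is exactly the map $\Psi$ of Lemma~\ref{l:symm} (description (1)), and what you flag as the main obstacle is already recorded there and in Remark~\ref{r:qsymm}: $\theta$ sends $u_+^{(n-k,n)}(t)$ to $u_+^{(k,n)}(t)$ with the same $t$ and preserves $q$. The only difference is packaging: the paper uses this to write $p_{u^{(n)}}(x)\equiv\prod_{j=1}^k(1-\zeta^j_{n,k}t_nx)^{-1}\bmod x^n$ and takes the limit directly using $q=t_n^{-n}$, whereas you invoke Proposition~\ref{prop:lim-q-param} as a black box together with coefficientwise continuity of $p(x)\mapsto 1/p(-x)$, which works equally well.
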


We first prove a lemma that constructs the isomorphism $qH^*(Gr(k,n))\cong qH^*(Gr(m,n))$ that swaps $\sigma_{P_k}^\lambda$ and $\sigma_{P_m}^{\lambda'}$, see Section~\ref{s:qcoh}, on the level of Peterson Toeplitz varieties. Recall Definitions~\ref{d:Ykm} and \ref{d:Xnm}. 

\begin{lemma}\label{l:symm} For $k+m=n$, the isomorphism $\Psi=\Psi_{m,k}: X^{(n)}_{P_{m}}\longrightarrow  X^{(n)}_{P_k}$ determined by
\begin{eqnarray*}
    \Psi^*: \C[X^{(n)}_{P_{k}}]&\longrightarrow & \C[X^{(n)}_{P_m}]\\
    \mathfrak S^\lambda_{P_k}&\mapsto &\mathfrak S^{\lambda'}_{P_m}
\end{eqnarray*} 
can be described in the following equivalent ways. 
\begin{enumerate}
\item For $u\in X_{P_m}$, we have
$\Psi(u)=\bar w_0\inv (u\inv)^T \bar w_0$.
\item Writing $u\in X_{P_m}$ as $u_n(c_1,\dotsc, c_k)$ in terms of its entries, we have
\begin{eqnarray*}\label{e:symm}
\Psi(u_{n}(c_1,\dotsc, c_k))&= &
u_{n}(-c_1,c_2,\dotsc, (-1)^k c_k)\inv,
\end{eqnarray*}
\item Writing $u$ as $u_{n}^E(x_1,\dotsc, x_k)$ for $(x_1,\dotsc, x_k)\in\mathcal X_{n,k}$, we have 
\begin{eqnarray*}\label{e:symm}
\Psi(u_{n}^E(x_1,\dotsc, x_k))&= &
u_{n}^E(-x_1,-x_2,\dotsc, - x_k)\inv.
\end{eqnarray*}
\item 
Writing $u=u_n^E(x_1,\dotsc, x_k)$ for $[x_1,\dotsc, x_k,x_{k+1},\dotsc,x_n]_{k,m}\in\mathcal Y_{k,m}$, we have
\begin{eqnarray*}\label{e:symm}
\Psi(u_{n}^E(x_1,\dotsc, x_k))&= &
u_{n}^E(-x_{k+1},-x_{k+2},\dotsc,-x_n).
\end{eqnarray*}
\end{enumerate}
Furthermore, we have that $\Psi$ restricts to a bijection $\Psi_{>0}: X^{(n)}_{P_{m}}(\R_{>0})\longrightarrow X^{(n)}_{P_k}(\R_{>0})$ of  the totally positive parts, where it satisfies
\begin{equation}\label{e:Psipos}
\Psi\left(u_+^{(m,n)}(t)\right)=u_+^{(k,n)}(t)    
\end{equation}
for $t\in \R_{>0}$.
\end{lemma}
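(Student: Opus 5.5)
I will take description (1) as the definition of $\Psi$ and show that it has all the listed properties. The starting point is Lemma~\ref{l:lambda'}, which gives
\[
\Delta^{\lambda'}_m(u)=\Delta^\lambda_k\bigl(\bar w_0\inv (u\inv)^T\bar w_0\bigr)
\]
for all $u\in SL_n$. Applying it with $\lambda=\emptyset$ and dividing yields
\[
\mathfrak S^{\lambda'}_{P_m}(u)=\mathfrak S^\lambda_{P_k}\bigl(\bar w_0\inv (u\inv)^T\bar w_0\bigr).
\]
So, once we know that $\theta(u):=\bar w_0\inv(u\inv)^T\bar w_0$ maps $X_{P_m}$ isomorphically onto $X_{P_k}$, the map $\theta^*$ sends $\mathfrak S^\lambda_{P_k}$ to $\mathfrak S^{\lambda'}_{P_m}$. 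These functions generate the coordinate rings by Theorem~\ref{t:Peterson}, so this pins down $\Psi=\theta$.

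\textbf{Step 1: $\theta$ preserves $X$ and swaps the strata.} By Remark~\ref{r:iotaequiv}, on $X$ the map $\theta$ agrees with $\iota$. It therefore acts on generating functions by
\[
p(x)\mapsto \frac{1}{p(-x)} \bmod x^{n},
\]
so it preserves $X$ and is an involution. This is exactly description (2), because the generating function of $u_n(-c_1,c_2,\dotsc)$ is $p(-x)$ and inversion in $X$ corresponds to inverting the power series.

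For the strata, apply Lemma~\ref{l:lambda'} with $\lambda=\emptyset$ for every $j$: it gives $\Delta_{n-j}(u)=\Delta_j(\theta(u))$. Hence the nonvanishing pattern $\{m\}$ of corner minors is carried to $\{k\}$, and $\theta(X_{P_m})=X_{P_k}$.

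\textbf{Step 2: descriptions (3) and (4).} Description (3) follows directly from (2): the generating function of $u^E_n(x_1,\dotsc,x_k)$ is $\prod(1+x_ix)$, and substituting $-x$ gives $\prod(1-x_ix)$, the generating function of $u^E_n(-x_1,\dotsc,-x_k)$.

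For (4), use the identity from the proof of Lemma~\ref{l:Ykm},
\[
\prod_{i=1}^n(1+x_i x)=1+\Bigl(\prod_i x_i\Bigr)x^n.
\]
Modulo $x^n$ this gives
\[
\frac{1}{\prod_{i\le k}(1-x_ix)}\equiv\prod_{i>k}(1-x_ix)\quad\text{after replacing $x$ by $-x$.}
\]
That is, $1/p(-x)$ is the generating function of $u^E_n(-x_{k+1},\dotsc,-x_n)$. To check that this element lies in $\mathcal X_{n,m}$, note that the $-x_i$ are distinct, nonzero, and have equal $n$-th powers.

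\textbf{Step 3: positivity.} $\theta$ is a composition of inversion, transpose and conjugation by $\bar w_0$. This is a positivity-preserving involution of $U_-$, which can be seen either from Lemma~\ref{l:lambda'}-type minor identities or from its agreement with $\iota$ on $X$. So $\theta$ maps $X_{P_m}(\R_{>0})$ bijectively onto $X_{P_k}(\R_{>0})$.

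For \eqref{e:Psipos}, use (4) with $x_i=t\zeta^i_{n,k}$ for $i\le k$. The remaining $n$-th roots of $t^n(-1)^{k+1}$ are $x_{k+1},\dotsc,x_n$, and we need to show that their negatives are exactly $t\boldsymbol\zeta^+_{n,m}$. Negation rotates the circle by $\pi$, so the ``leftmost'' $m$ roots become the rightmost $m$ roots of $(-1)^{k+1}(-1)^n=(-1)^{m+1}$, which is precisely $\boldsymbol\zeta^+_{n,m}$ by Remark~\ref{r:rightmost}.

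The main obstacle is keeping this root bookkeeping straight across the parity cases of $k$ and $m$. As a shortcut, I would instead use uniqueness: by Theorem~\ref{t:nonneg-toep-homeo} the image is $u_+^{(k,n)}(s)$ for some $s>0$, and comparing quantum parameters via Lemma~\ref{l:qviaparam} (both sides give $1/t^n$, since the Grassmannian isomorphism preserves $q$) forces $s=t$.
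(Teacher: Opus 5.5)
Your proposal is correct and follows essentially the same route as the paper's proof. Description (1) comes from Lemma~\ref{l:lambda'}, and (2) and (3) come from the effect $p(x)\mapsto 1/p(-x)$ on generating functions. Description (4) comes from $\prod_{i=1}^n(1+x_ix)=1+(\prod_i x_i)x^n$, which is what underlies the paper's use of $H_j(x_1,\dotsc,x_k)=E_j(-x_{k+1},\dotsc,-x_n)$ from Lemma~\ref{l:Ykm}. Finally, \eqref{e:Psipos} follows from negating the complementary roots to get $t\boldsymbol\zeta^+_{n,m}$. Your explicit checks that $\Psi$ swaps the strata and preserves positivity fill in steps the paper leaves implicit. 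Your alternative argument for \eqref{e:Psipos} is also valid and avoids the parity bookkeeping: it combines $q_k(\Psi(u))=q_m(u)$ with Lemma~\ref{l:qviaparam} and the injectivity of the parametrisation in Theorem~\ref{t:nonneg-toep-homeo}.
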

\begin{remark}\label{r:qsymm} We note that  $q_k(\Psi(u))=q_m(u)$ follows automatically, since the isomorphism $qH^*(Gr(k,n))\cong qH^*(Gr(m,n))$ from  \cite{BCFF}, which is determined by sending $\sigma^\lambda_{P_k}$ to $\sigma^{\lambda'}_{P_m}$, also sends $q_k$ to $q_m$. Also note that one further description of $\Psi$ is given by $\Psi(u)=\iota(u)$, by Remark~\ref{r:iotaequiv}.
\end{remark}
\begin{proof} Note that $\bar w_0\inv(u\inv)^T\bar w_0=\iota(u)$ since $u$ is Toeplitz. The formula for $\Psi$ given in (1) follows from Lemma~\ref{l:lambda'}. The description in (2) follows from  the one in (1). Namely, (1) can be rewritten as $\Psi(u)=(\bar w_0\inv u^T \bar w_0)\inv$, but transposing and then conjugating by $\bar w_0$ the Toeplitz matrix $u_n(c_1,\dotsc, c_k)$ has precisely the effect of alternatingly reversing the signs to give $u_n(-c_1,c_2,\dotsc, (-1)^k c_k)$.  The description of $\Psi$ from (3) is straightforwardly equivalent to the one in (2).  
Finally, it is clear that
\[
u_n^{E}(-x_1,\dotsc, -x_k)\inv=u_n(H_1(x_1,\dotsc, x_k),\dotsc, H_m(x_1,\dotsc, x_k)),
\]
using also that $ H_{m+1}(x_1,\dotsc, x_k)=\dotsc=H_n(x_1,\dotsc,x_k)=0$ for $(x_1,\dotsc,x_k)\in \mathcal X_{n,k}$. Now the description in (4) follows, since  $H_j(x_1,\dotsc, x_k)=E_j(-x_{k+1},\dotsc, -x_n)$, see Lemma~\ref{l:Ykm}. Finally, if $\{x_1,\dotsc, x_k\}=\boldsymbol{\zeta}^+_{n,k}$ then it follows that $\{-x_{k+1},\dotsc, -x_n\}=\boldsymbol{\zeta}^+_{n,m}$, so that (4) also implies \eqref{e:Psipos}.
\end{proof}

\begin{proof}[{Proof of Proposition~\ref{prop:lim-q-param2}}] 
Using Definition~\ref{d:zeta} and Theorem~\ref{t:nonneg-toep-homeo}, we may write 
\begin{equation}
    u^{(n)}=u^{(k,n)}_+(t_n)=u^E(t_n\zeta^1_{n,n-k},\dotsc, t_n\zeta^m_{n,m}),
\end{equation} 
for some $t_n\in\R_{>0}$. On the other hand, by Lemma~\ref{l:symm} we have 
\[
u^{(k,n)}_+(t_n)\overset{{\eqref{e:Psipos}}}=\Psi((u^{(n-k,n)})(t_n))\overset{\ref{l:symm}.(3)}= u_n^E(-t_n\zeta^1_{n,k},\dotsc,-t_n\zeta^{k}_{n,k})\inv.
\]
For the generating polynomial we therefore have
\begin{equation}
p_{u^{(n)}}(x)\equiv\left(1-E_1(t_n\boldsymbol{\zeta}^+_{n,k})x+E_2(t_n\boldsymbol{\zeta}^+_{n,k})x^2 \pm\dotsc +(-1)^{k}E_{k}(t_n{\boldsymbol{\zeta}}^+_{n,k})x^{k}\right)\inv \equiv\frac{1}{\prod_{j=1}^{k}(1-\zeta^j_{n,k}t_nx)} \mod x^n,
\end{equation}
which identifies $p_{u^{(n)}}(x)$ as the truncation (at $x^{n-1}$) of the infinite power series on the right-hand side. 

Now, since $k$ is fixed and the $\zeta^{j}_{n,k}$ converge to $1$ as $n\to \infty$, we have that $p_{u^{(n)}}(x)$ converges if and only if the $t_n$ converge. Let us set $\alpha:=\lim_{n\to\infty}{t_n}$, noting that $\alpha\ge 0$. The limit of the $p_{u^{(n)}}(x)$ in $\R[[x]]$ (see Remark~\ref{r:convergence}) is then given by
\[
\lim_{n\to\infty} p_{u^{(n)}}(x)=\frac{1}{(1-\alpha x)^k},
\]
since the truncation disappears in the infinite limit. Furthermore, since $q(u_+(t))=\frac{1}{t^n}$, see Lemma~\ref{l:qviaparam}, we obtain $\alpha$ from $(q^{(n)}(u^{(n)}))_n$ by the limit formula given in \eqref{e:qalpha}. For the final statement of Proposition~\ref{prop:lim-q-param2}, it suffices to take the sequence given by $u^{(n)}=u^{(k,n)}_+(\alpha)$ if $\alpha>0$ or $u^{(n)}=u^{(k,n)}_+(\frac{1}n)$ if $\alpha=0$. 
 \end{proof}
Finally, we have the analogue to~\cref{thm:schub-class-lim} about asymptotics of Schubert classes for the case where $k$ remains constant, which now follows easily from the results already proved. Note that given a partition $\lambda$ with at most $k$ parts, it will give rise to a function $\mathfrak S^\lambda_{P_k}$ (or written $\mathfrak S^{\lambda,(n)}_{P_k}$) on $X^{(n)}_{P_k}$ for all $n>k$.
\begin{theorem}\label{thm:schub-class-lim-k}
 Suppose the sequence $(u^{(n)})_{n}$ with $u^{(n)}\in X_{P_k}^{(n)}(\R_{>0})$ converges to the infinite Toeplitz matrix $u^\infty$ with generating function $\frac 1{(1-\alpha x)^k}$, where $\alpha\in\R_{>0}$. Let $\lambda=(\lambda_1,\dotsc, \lambda_k)$ be a partition with at most $k$ parts. 
 
 Then 
 \[
 \lim_{n\to\infty}{\left( \mathfrak{S}^{\lambda}_{P_k}(u^{(n)}) \right)}=S_{\lambda}\left( \frac{1}{\alpha},\dots,\frac{1}{\alpha} \right),\] 
 where $S_{\lambda}$ is the Schur polynomial in $k$ variables associated to $\lambda$. 
\end{theorem}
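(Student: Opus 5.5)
The plan is to transport the statement to the fixed-$m$ setting of Theorem~\ref{thm:schub-class-lim} using the symmetry $\Psi$ of Lemma~\ref{l:symm}. Write $m=n-k$, which now varies with $n$, while $k$ plays the role of the fixed parameter. By Theorem~\ref{t:nonneg-toep-homeo} we can write $u^{(n)}=u_+^{(k,n)}(t_n)$ with $t_n>0$. The proof of Proposition~\ref{prop:lim-q-param2} shows $t_n\to\alpha$. Next, by \eqref{e:Psipos} we have $u^{(n)}=\Psi_{m,k}(v^{(n)})$, where $v^{(n)}:=u_+^{(m,n)}(t_n)\in X^{(n)}_{P_{m}}(\R_{>0})=X^{(n)}_{P_{n-k}}(\R_{>0})$. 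By the defining property of $\Psi$ in Lemma~\ref{l:symm},
\[
\mathfrak S^{\lambda}_{P_k}(u^{(n)})=(\Psi^*\mathfrak S^{\lambda}_{P_k})(v^{(n)})=\mathfrak S^{\lambda'}_{P_{n-k}}(v^{(n)}).
\]

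Now $v^{(n)}$ is a sequence in the Peterson Toeplitz varieties of $Gr(n-k,n)$ with $k$ fixed, which is exactly the setting of Section~\ref{sec:asymptquantparam}, with $k$ in place of $m$. By the proof of Proposition~\ref{prop:lim-q-param}, the generating polynomial is $p_{v^{(n)}}(x)=\prod_{j=1}^k(1+t_n\zeta^j_{n,k}x)$. Since $t_n\to\alpha$ and $\zeta^j_{n,k}\to 1$, the matrices $v^{(n)}$ converge to the infinite Toeplitz matrix with generating function $(1+\alpha x)^k$. Here $\alpha>0$, so the hypothesis $\beta>0$ of Theorem~\ref{thm:schub-class-lim} holds with $\beta=\alpha$. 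The partition $\lambda'$ satisfies $(\lambda')_1=\ell(\lambda)\le k$, which is the required condition there. Theorem~\ref{thm:schub-class-lim} then gives
\[
\lim_{n\to\infty}\mathfrak S^{\lambda'}_{P_{n-k}}(v^{(n)})=S_{(\lambda')'}\left(\tfrac1\alpha,\dots,\tfrac1\alpha\right)=S_\lambda\left(\tfrac1\alpha,\dots,\tfrac1\alpha\right),
\]
a Schur polynomial in $k$ variables, which is the claim.

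The step needing the most care is the bookkeeping of indices in Lemma~\ref{l:symm}: we must check that $\Psi_{m,k}$ sends $u_+^{(m,n)}(t)$ to $u_+^{(k,n)}(t)$ with the same $t$, and that the pullback sends $\mathfrak S^\lambda_{P_k}$ to $\mathfrak S^{\lambda'}_{P_m}$, rather than the reverse direction. A second point is that $\lambda$ must fit in the $k\times m$ box for the function $\mathfrak S^\lambda_{P_k}$ to be defined. This holds for all $n$ large enough, since $\lambda_1\le m=n-k$ eventually. Given both points, the theorem follows directly, with no new estimates required.
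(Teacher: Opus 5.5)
Your proposal is correct and is essentially the paper's own proof: write $u^{(n)}=u_+^{(k,n)}(t_n)=\Psi(u_+^{(n-k,n)}(t_n))$ with $t_n\to\alpha$ via Lemma~\ref{l:symm}, pull back $\mathfrak S^\lambda_{P_k}$ to $\mathfrak S^{\lambda'}_{P_{n-k}}$, and apply Theorem~\ref{thm:schub-class-lim} with $m$ replaced by $k$, $\beta$ by $\alpha$, and $\lambda$ by $\lambda'$. The paper also gives a second proof of this theorem in Section~\ref{s:strange}, using the strange-duality evaluation formula \eqref{e:strangeeval}.
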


\begin{proof} As in the proof of Proposition~\ref{prop:lim-q-param2}, we have that $u^{(n)}=u_+^{(k,n)}(t_n)$ where $t_n\in\R_{>0}$, and $\lim_{n\to\infty}(t_n)=\alpha$. Moreover, by Lemma~\ref{l:symm} we have 
$u^{(k,n)}_+(t_n)=\Psi(u^{(n-k,n)}_+(t_n))$, and furthermore
\[
\mathfrak{S}^{\lambda}_{P_k}(u^{(n)})= \mathfrak{S}^{\lambda}_{P_k}\left(u^{(k,n)}_+(t_n)\right)=\mathfrak{S}^{\lambda}_{P_{k}}\left(\Psi\left(u^{(n-k,n)}_+(t_n)\right) \right)
=\mathfrak S_{P_{n-k}}^{\lambda'}\left(u^{(n-k,n)}_+(t_n)\right).\]
Now $u^{(n-k,n)}_+(t_n)$ converges to the infinite Toeplitz matrix with generating function $(1+\alpha x)^k$ as in the proof of Proposition~\ref{prop:lim-q-param}, and therefore Theorem~\ref{thm:schub-class-lim} applies (with $m$ replaced by $k$, $\beta$ by $\alpha$, and $\lambda$ by $\lambda'$), giving 
\begin{equation*}
\lim_{n\to\infty}\mathfrak S_{P_{n-k}}^{\lambda'}\left(u^{(n-k,n)}_+(t_n)\right)=S_\lambda\left(\frac 1\alpha,\dotsc,\frac 1\alpha\right).
\end{equation*}
This implies the desired formula for the limit of the $\mathfrak{S}^{\lambda}_{P_k}(u^{(n)})$.
\end{proof}

\section{Asymptotics of the Peterson Toeplitz variety for $Gr(n,2n)$}\label{sec:gr2n}
\begin{proposition}\label{p:exponential} For $\eta\in \R_{\ge 0}$ let $u^\infty(\eta)$ be the infinite totally nonnegative Toeplitz matrix 
with generating function $p_{u^\infty}(x)=e^{\eta x}$. Then there exists a sequence $(u^{(2n)})_n$ where $u^{(2n)}\in X^{(2n)}_{P_n}(\R_{>0})$ that converges to $u^\infty(\eta)$. Moreover,  if a sequence $(u^{(2n)})_n$ with $u^{(2n)}\in X^{(2n)}_{P_n}(\R_{>0})$ converges to an infinite Toeplitz matrix, then we must have 
\[
\lim_{n\to\infty}(u^{(2n)})=u^\infty(\eta)=u_\infty\left(\eta,\frac{1}{2!}\eta^2,\frac{1}{3!}\eta^3,\frac{1}{4!}\eta^4,\dotsc\right)
\]
for some $\eta\ge 0$.
\end{proposition}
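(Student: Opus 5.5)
We use the explicit parametrisation from Theorem~\ref{t:nonneg-toep-homeo} with $k=m=n$ inside $SL_{2n}$. Every $u^{(2n)}\in X^{(2n)}_{P_n}(\R_{>0})$ is of the form $u^E_{2n}(t_n\boldsymbol\zeta^+_{2n,n})$ for a unique $t_n>0$. Its generating polynomial is therefore
\[
p_{u^{(2n)}}(x)=\prod_{j=1}^n\bigl(1+t_n\zeta^j_{2n}x\bigr),
\]
where the $\zeta^j_{2n}$ are the $n$ consecutive ``right-most'' $2n$-th roots of $(-1)^{n+1}$. They are equally spaced with ratio $e^{-\pi i/n}$ and are symmetric under complex conjugation. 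The plan is to pass to logarithms:
\[
\log p_{u^{(2n)}}(x)=\sum_{r\ge1}\frac{(-1)^{r+1}}{r}\,t_n^r\,P_r^{(n)}x^r,\qquad P_r^{(n)}:=\sum_{j=1}^n(\zeta^j_{2n})^r.
\]
We then control each coefficient $t_n^rP_r^{(n)}$ separately as $n\to\infty$ with $r$ fixed. Since $\exp$ and $\log$ are continuous bijections between $1+x\R[[x]]$ and $x\R[[x]]$ in the coefficientwise topology of Remark~\ref{r:convergence}, convergence of $p_{u^{(2n)}}$ is equivalent to convergence of each coefficient $t_n^rP_r^{(n)}$. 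Note also that $p_{u^{(2n)}}$ has degree $n$, so truncation plays no role in any fixed degree.

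The key computation evaluates the power sums as geometric sums.
\begin{itemize}
\item For $r=1$, $P_1^{(n)}=E_1(\boldsymbol\zeta^+_{2n,n})$ is a positive real number. Summing the geometric series gives $P_1^{(n)}=1/\sin(\pi/(2n))\sim 2n/\pi$.
\item For $r\ge 2$, the sum runs over $n$ terms with ratio $e^{-\pi i r/n}$, so its total angular span is $r\pi$. If $r$ is even this covers full circles and $P_r^{(n)}=0$. In general we have the bound $|P_r^{(n)}|\le 1/|\sin(\pi r/(2n))|=O(n)$, with the implied constant depending on $r$.
\end{itemize}

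Now suppose $(u^{(2n)})$ converges. Then $c_1^{(2n)}=t_nP_1^{(n)}$ converges to some $\eta\ge0$. Hence $t_n=O(1/n)$, and more precisely $t_n\sim \pi\eta/(2n)$. It follows that for each fixed $r\ge2$,
\[
t_n^r\,|P_r^{(n)}|=O(n^{-r})\cdot O(n)=O(n^{1-r})\longrightarrow 0.
\]
So $\log p_{u^{(2n)}}(x)\to\eta x$ coefficientwise, and therefore $p_{u^{(2n)}}(x)\to e^{\eta x}$. This forces the limit to be $u_\infty(\eta,\eta^2/2!,\eta^3/3!,\dotsc)$. This limit is totally nonnegative, either because total nonnegativity is a closed condition on each truncation $[\ ]_N$, or directly from Edrei's theorem (Theorem~\ref{theorem:edrei-infinite}) with $\gamma=\eta$.

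For existence, given $\eta\ge0$ we set $t_n:=\eta/P_1^{(n)}$ when $\eta>0$, and $t_n:=1/n^2$ when $\eta=0$. The same estimates then show that $u^{(2n)}:=u^{(n,2n)}_+(t_n)$ converges to $u^\infty(\eta)$.

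The main obstacle is the uniform-in-$n$ estimate on $|P_r^{(n)}|$ for fixed $r\ge2$. This needs some care with the precise phases in Definition~\ref{d:zeta}, in both parities of $n$. If the closed-form geometric sum becomes awkward, we will fall back on the crude bound $|P_r^{(n)}|\le n$. That bound still gives $t_n^r|P_r^{(n)}|=O(n^{1-r})\to0$, so the argument goes through unchanged.
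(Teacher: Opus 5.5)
Your proof is correct, but it takes a different route from the paper's.

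\textbf{The paper's route.} The paper computes every coefficient in closed form. Via the Gaussian $q$-binomial formula at $v=e^{\pi i/n}$, it writes $E_\ell(\boldsymbol\zeta^+_{2n,n})=\qbinom{n}{\ell}_v$ as an explicit ratio of products of sines. It then takes the limit of $t_n^\ell E_\ell$ factor by factor, using $n\sin(j\pi/(2n))\to j\pi/2$ in the denominator and $\sin(r\pi/(2n))\to 1$ for the $\ell$ numerator factors with $r\ge n-\ell+1$. The converse runs the same way: the $\ell=1$ coefficient forces $nt_n$ to converge, and the sine formulas then give $c_\ell=(2\gamma/\pi)^\ell/\ell!$.

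\textbf{Your route.} You pass to $\log p_{u^{(2n)}}$. The only exact computation you then need is the geometric sum $P_1^{(n)}=1/\sin(\pi/(2n))$. All higher power sums are killed by the trivial bound $|P_r^{(n)}|\le n$ once $t_n=O(1/n)$. This is a standard exponential-limit principle: $\prod_j(1+a_jx)\to e^{\eta x}$ coefficientwise whenever $\sum_j a_j\to\eta$ and $\sum_j|a_j|^2\to 0$. Here $a_j=t_n\zeta^j_{2n}$, so $\sum_j|a_j|^2=nt_n^2\to0$.

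\textbf{What each buys.} Your argument is more elementary and more robust. It uses nothing about the phases of the $\zeta^j_{2n}$ beyond the value of $E_1$, and it makes transparent why only the first coefficient survives in the limit. It also shows directly that convergence of the single entry $c_1^{(2n)}$ already forces convergence of the whole matrix. The paper's computation instead yields exact finite-$n$ formulas for all entries of $u^{(2n)}$.

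Your choice $t_n=1/n^2$ for $\eta=0$ is also the right one. The paper's recipe $t_n=\gamma/n$ with $\gamma=\pi\eta/2$ degenerates to $t_n=0$ in that case, which is not a point of $X^{(2n)}_{P_n}(\R_{>0})$.

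For comparison, Remark~\ref{r:otherexp} gives a third argument. It bounds the product coefficientwise by $(1+\gamma x/n)^n$, derives the identity $p_{u^\infty}(x)\,p_{u^\infty}(-x)=1$ in the limit, and then invokes Edrei's theorem.
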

\begin{proof}
Let us write $u^{(2n)}=u_+^{(n,2n)}(t_n)=u_{(2n)}(t_n\boldsymbol{\zeta}^+_{2n,n})$ where $t_n\in \R_{>0}$, using Theorem~\ref{t:nonneg-toep-homeo}. The generating polynomial associated to $u^{(2n)}$ is
\begin{equation}\label{e:n2ngen}
p_{u^{(2n)}}(x)=\prod_{j=1}^n(1+t_n\zeta^j_{2n}x).
\end{equation} 
Letting $t_n=\frac \gamma n$, we compute the limit of $u^{(2n)}$ as $n\to \infty$ explicitly.

Set $v\coloneq e^{\frac{2\pi i}{2n}}=e^{\frac{\pi i}{n}}$ and note that 
\[ \zeta_{2n}^j = v^{\frac{n-2j+1}2}\quad\text{ for $j=1,\dotsc, n$.}
    \]
As in \cite{rietsch2001grassmannians,Karlin}, this allows us to compute $p_{u^{(2n)}}(x)$ directly using a version of the Gaussian $q$-binomial formula,
\begin{equation}\label{e:pu2n}
\prod_{j=1}^n(1+v^{-\frac{n+1}2+j}x)=\sum_{j=0}^{n}\qbinom{n}{i}_v x^j,
\end{equation}
where
\begin{equation*}
    \qquad [k]_v=\frac{v^{\frac{-k}2}-v^{\frac{k}2}}{v^{-\frac{1}{2}}-v^{\frac{1}{2}}},\quad [k]_v! =[1]_v[2]_v\dotsc[k]_v,\quad \text{and} \quad\qbinom{n}{j}_v=\frac{[n]_v!}{[j]_v![n-j]_v!}.
\end{equation*}
Note that 
\begin{equation}
[k]_v=\frac{v^{\frac{-k}2}-v^{\frac{k}2}}{v^{-\frac{1}{2}}-v^{\frac{1}{2}}}=\frac{\sin(\frac{k\pi}{2n})}{\sin(\frac{\pi}{2n})},
\end{equation}
so that the coefficient of $x^\ell$ in $p_{u^{(2n)}}(x)$ is given by
\begin{equation}\label{e:ekzeta}
\frac{\gamma^\ell}{n^\ell}E_\ell(\boldsymbol{\zeta}^+_{2n,n})=\frac{\gamma^\ell}{n^\ell}\frac{\prod_{j=1}^n\sin(\frac{j\pi}{2n})}{\left(\prod_{j=1}^{\ell}\sin(\frac{j\pi}{2n})\right)\left(\prod_{j=1}^{n-\ell}\sin(\frac{j\pi}{2n})\right)}=\gamma^\ell\frac{\prod_{r=n-\ell+1}^n\sin(\frac{r\pi}{2n})}{\prod_{j=1}^{\ell}(n\sin(\frac{j\pi}{2n}))}.
\end{equation}
Here we have a fixed number $\ell$ of factors in the numerator and denominator in the right-hand side expression. The factors in the denominator  are of the form $n\sin(\frac {j\pi}{2n})$, where $x=\frac {j\pi}{2n}$ is tending to $0$ as $n\to\infty$. Therefore, we can make  use of the linear approximation $\sin(x)\sim  x$ to show that $\lim_{n\to\infty}(n\sin(\frac {j\pi
}{2n}))= \frac {j\pi}2$, as higher-degree terms vanish in the limit. Meanwhile, the corresponding $\ell$ factors in the numerator are each approaching $\sin(\frac{\pi}2)=1$ as $n\to\infty$, so that overall we find
\[
\frac{\gamma^\ell}{n^\ell}E_\ell(\boldsymbol{\zeta}^+_{2n,n})\to \gamma^\ell\frac{2^\ell}{\ell!\pi^\ell}\qquad (n\to\infty).
\]
Finally, the polynomials $p_{u^{(n)}}(x)$ converge coefficientwise to 
\[
\sum_{\ell=1}^\infty \frac{1}{\ell!}\left(\frac{2\gamma}{\pi}\right)^\ell x^\ell=e^{\frac{2\gamma}{\pi}x}.
\]
We obtain $e^{\eta x}$ by setting $\gamma=\frac{\pi}2\eta$.

For the converse, suppose we have an arbitrary  sequence $(u^{(2n)})_n$ with $u^{(2n)}\in X_{P_n}^{(2n)}(\R_{>0})$, converging to $u_{\infty}(c_1, c_{2},\dots)$. By Theorem~\ref{t:nonneg-toep-homeo}, we can write $u^{(2n)} = u_{2n}^E(t_n \boldsymbol{\zeta}_{2n,n}^{+}))$, with $t_{n}\in \R_{>0}$.   The generating polynomial for each term is given by \[
  p_{u^{(2n)}}(x) = \prod\limits_{i=1}^n \left( 1 + t_n\zeta_{2n}^i x \right) = \sum\limits_{i=1}^{n}E_i(t_n \boldsymbol{\zeta}_{2n,n}^+) x^{i}. 
\]
By assumption, $E_i(t_n \boldsymbol{\zeta}_{2n,n}^+)$ converges to $c_{i}$, and by~(\ref{e:ekzeta}) we have\[
  E_i(t_n \boldsymbol{\zeta}_{2n,n}^+) = t_{n}^{i}\frac{\prod_{r=n-i+1}^n\sin(\frac{r\pi}{2n})}{\prod_{j=1}^{i}(\sin(\frac{j\pi}{2n}))}.
\]
By again approximating the $\sin$ factors for large $n$, we have that $\lim_{n\to\infty}(\sin(\frac{r\pi}{2n}))=1$ and $\lim_{n\to\infty}(n\sin(\frac{j\pi}{2n}))=\frac{j\pi}{2}$. Hence 
\[
  \lim_{n\to\infty} E_i(t_n\, \boldsymbol{\zeta}_{2n,n}^+) = \lim_{n\to\infty}(nt_{n})^{i}\frac{\prod_{r=n-i+1}^n\sin(\frac{r\pi}{2n})}{\prod_{j=1}^{i}(n\sin(\frac{j\pi}{2n}))}=\lim\limits_{n\to\infty} \frac{(nt_n)^i2^i }{i! \pi^{i}},
\]
so $(nt_{n})_n$ must be convergent. Let $\lim\limits_{n\to \infty}nt_n = \gamma$. Then $c_{i} = \frac{\gamma^i 2^i}{i! \pi^{i}}$, so \[
  p_{u^{\infty}}(x) = e^{\eta x},  
\]
 where $\eta = \frac{2\gamma}{x}$, as desired. 
\end{proof}

\begin{remark}\label{r:otherexp} In the proof of Proposition~\ref{p:exponential} we considered the sequence  $u^{(2n)}=u_{2n}(t_n\boldsymbol{\zeta}^+_{2n,n})$ given by $t_n=\frac \gamma n$, and computed the limit $n\to\infty$ explicitly, obtaining an exponential function. However,  there is an alternative approach to arguing that this limit must result in an exponential function. Consider the generating function~\eqref{e:n2ngen}, and note that, for pairs of factors that are complex conjugate, we have the coefficient-wise inequality, 
\[(1+t_n\zeta^j_{2n}x)(1+t_n\zeta^{n-j+1}_{2n}x)=1+(\zeta^j_{2n}+\zeta^{n-j+1}_{2n})t_n x+t_n^2x^2<\left(1+t_nx\right)^2.
\]
Since $t_n=\frac{\gamma}n$ for some $\gamma\in\R_{\ge 0}$, the above inequality implies
\begin{equation}\label{e:preexpbound}
p_{u^{(2n)}}(x)=\prod_{j=1}^n(1+\frac{\gamma}n\zeta^j_{2n}x)<\left(1+\frac{\gamma x}n\right)^n.
\end{equation}
Therefore, possibly passing to a subsequence, we find an infinite Toeplitz matrix $u^\infty$ in the limit,  
and its generating function $p_{u^\infty}$ must satisfy $p_{u^\infty}(x)\le e^{\gamma x}$.

Next, use that the set
\[
\boldsymbol{\zeta}^+_{2n,n}\cup 
-\boldsymbol{\zeta}^+_{2n,n}=\{\pm\zeta^i_{2n}\mid i=1,\dotsc,n\}
\]
is precisely the set of all solutions to $x^{2n}+(-1)^{n}=0$, from which it follows that 
\[
\prod_{i=1}^n(1+\zeta^i_{2n}x)(1-\zeta^i_{2n}x)=1 + (-1)^n x^{2n}.
\]
This implies for our polynomial $p_{u^{(2n)}}(x)$ that
\begin{equation}
p_{u^{(2n)}}(x)p_{u^{(2n)}}(-x)=\prod_{i=1}^n(1+t_n\zeta^i_{2n}x)(1-t_n\zeta^i_{2n} x)=1 + (-1)^n t_n^{2n} x^{2n}.
\end{equation}
Therefore taking the limit $n\to\infty$ we see that $p_{u^\infty}(x)$ satisfies the symmetry property,
\begin{equation}\label{e:inflimidentity}
p_{u^\infty}(x)=\frac{1}{p_{u^\infty}(-x)}.
\end{equation}
We note, however, that $p_{u^\infty}(x)$ has no poles, since it is bounded by $e^{\gamma x}$ as a consequence of \eqref{e:preexpbound}. On the other hand, the equality \eqref{e:inflimidentity} then implies that $p_{u^\infty}(-x)$ has no zeroes. Therefore, $p_{u^\infty}(x)$ has no zeros. Therefore the Schoenberg parameters for $u^\infty$ are all $0$ and Edrei's theorem implies that $p_{u^\infty}(x)=e^{\eta x}$ for some $\eta\ge 0$.  
Finally, we may again obtain $\eta$ by computing the coefficient of $x$ in $p_{u^{(2n)}}(x)$ and taking the limit $n\to \infty$. Namely, with $v=e^{\frac{\pi i}{n}}$ we have,
\begin{equation}\label{e:eta}
\eta=\lim_{n\to\infty}\frac{\gamma}n
E_1(\boldsymbol{\zeta}^+_{2n,n})=\lim_{n\to\infty}\frac{\gamma}n[n]_v={\gamma}\lim_{n\to\infty}\frac{\sin(\frac{\pi}{2})}{n\sin(\frac{\pi}{2n})}=2\gamma/\pi.    
\end{equation}
\end{remark}
The key part of Edrei's theorem (that the non-vanishing, entire factor in \eqref{eq:inf-gen-func} must be of the form $e^{\gamma x}$) can now be equivalently restated using ``$Gr(n,2n)$-Toeplitz matrices", thanks to Proposition~\ref{p:exponential}, giving the following result.
 \begin{corollary}\label{c:Edreiequiv}
Any element of $u^\infty\in\Toep_\infty(\R_{\ge 0})$ whose generating function has no zeros or poles must lie in the limiting set of the Peterson Toeplitz varieties of middle-dimensional Grassmannians. That is, $u^\infty=\lim_{n\to\infty}u^{(2n)}$ for a sequence  $(u^{(2n)})_n$ with  $u^{(2n)}\in X^{(2n)}_{P_n}(\R_{>0})$.
\end{corollary}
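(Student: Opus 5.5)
The plan is to combine Edrei's theorem (Theorem~\ref{theorem:edrei-infinite}) with the existence half of Proposition~\ref{p:exponential}.

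First, let $u^\infty\in\Toep_\infty(\R_{\ge 0})$ be lower-triangular unipotent, so its generating function has constant term $1$. By Theorem~\ref{theorem:edrei-infinite} we can write
\[
p_{u^\infty}(x)=e^{\gamma x}\prod_{i}\frac{1+\beta_i x}{1-\alpha_i x}.
\]
Here $(\boldsymbol\alpha,\boldsymbol\beta)\in\Omega_S$ and $\gamma\ge 0$.

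The next step is to show that all Schoenberg parameters vanish. The summability conditions $\sum\alpha_i<\infty$ and $\sum\beta_i<\infty$ ensure that $\prod_i(1+\beta_i x)$ is entire, with zeros exactly at the points $-\beta_i^{-1}$ for $\beta_i>0$. Likewise, $\prod_i(1-\alpha_i x)^{-1}$ is meromorphic, with poles exactly at $\alpha_i^{-1}$ for $\alpha_i>0$. No cancellation can occur between these two families, because the zeros are negative reals and the poles are positive reals. The factor $e^{\gamma x}$ has neither zeros nor poles.

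Consequently, if $p_{u^\infty}$ has no zeros or poles, then every $\alpha_i$ and every $\beta_i$ must be $0$. This leaves $p_{u^\infty}(x)=e^{\gamma x}$, that is, $u^\infty=u^\infty(\eta)$ with $\eta=\gamma\ge 0$.

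Finally, the first assertion of Proposition~\ref{p:exponential} supplies a sequence $u^{(2n)}\in X^{(2n)}_{P_n}(\R_{>0})$ converging to $u^\infty(\eta)$. Concretely, one takes $u^{(2n)}=u_+^{(n,2n)}(t_n)$ with $t_n=\frac{\pi\eta}{2n}$ when $\eta>0$. When $\eta=0$ one takes $t_n=\frac{1}{n^2}$, say, so that every coefficient tends to $0$; this still lies in the positive part, since $t_n>0$. This completes the argument.

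The only delicate point is making rigorous the phrase ``no zeros or poles.'' The generating function $p_{u^\infty}$ should be interpreted as its meromorphic continuation, which Edrei's theorem provides. The no-cancellation observation above, based on the opposite signs of the zero and pole locations, is the step that needs care. Everything else is direct citation of earlier results.
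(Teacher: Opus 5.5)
Your proposal is correct and matches the paper's proof: Edrei's theorem forces $p_{u^\infty}(x)=e^{\gamma x}$, and the existence part of Proposition~\ref{p:exponential} then supplies the approximating sequence. You also fill in two details the paper leaves implicit: why no zeros and poles can cancel, and the degenerate case $\eta=0$, where the paper's choice $t_n=\gamma/n$ would give $t_n=0\notin\R_{>0}$ and your $t_n=1/n^2$ repairs this.
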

\begin{proof}
By Theorem~\ref{theorem:edrei-infinite}, if the generating function of $u^\infty$ has no zeros or poles then it must be of the form $e^{\gamma x}$, and then Proposition~\ref{p:exponential} says it is the limit of a sequence of elements $u^{(2n)}\in X_{P_n}^{(2n)}(\R_{>0})$. 
\end{proof}
\begin{remark}
A direct proof of Corollary~\ref{c:Edreiequiv} would give rise to a new proof of Edrei's theorem, since, again by Proposition~\ref{p:exponential},   if $u^\infty$ is such a limit of $u^{(2n)}\in X^{(2n)}_{P_n}(\R_{>0})$ then it must have generating function of the form $e^{\gamma x}$.  

\end{remark}

\section{Strange duality} \label{s:strange} We observe a new perspective on the so-called `strange duality' involution for Grassmannian quantum cohomology, as arising within  Peterson theory, and use it to give alternative proofs of Theorems~\ref{thm:schub-class-lim} and \ref{thm:schub-class-lim-k}. Recall first that strange duality is an involution of $qH^*(Gr(k,n))[q_k\inv]$ that inverts the quantum parameter. It was found by Postnikov~\cite{PostnikovAffine}, and independently in the case of $q=1$ by Hengelbrock~\cite{hengelbrock}, who interpreted it in terms of complex conjugation. Let $\operatorname{MaxDiag}(\lambda)$ denote the number of boxes contained in the diagonal of the Young diagram~$\lambda$, and recall that the Poincar\'e dual $PD(\lambda)$ of $\lambda\subseteq k\times{m}$ is the complement of $\lambda$ inside the $k\times m$ box, rotated by $180$ degrees.
\begin{definition}[{\cite[Definition~2.2]{hengelbrock},\cite{PostnikovAffine}}]
For a partition $\lambda\subseteq k\times m$, let $\mathbb{S}(\lambda)$ be the partition obtained by considering the maximal square in $\lambda$, whose diagonal has $\mu:=\operatorname{MaxDiag}(\lambda)$ many boxes, and replacing the two parts of $\lambda$ that lie outside this square by their Poincar\'e duals (relative to their natural bounding rectangles of dimension $\mu\times (m-\mu)$ and $(k-\mu)\times \mu$, respectively).   
\end{definition}
\begin{remark} Note that the partition $\mathbb S(\lambda)$ still has the same $\operatorname{MaxDiag}$ value, and $\mathbb S$ is an involution on partitions $\lambda\subseteq k\times m$ that preserves $\MaxDiag$.
\end{remark}
\begin{theorem}[{\cite{PostnikovAffine,hengelbrock}}] The linear map defined by 
\begin{eqnarray*}
\mathbf S: qH^*(Gr(k,n))[q\inv]&\longrightarrow & qH^*(Gr(k,n))[q\inv]\\
q^\ell\sigma^\lambda_{P_k}&\mapsto&
q^{-\ell-\operatorname{MaxDiag}(\lambda)}\sigma_{P_k}^{\mathbb{S}(\lambda)}.
\end{eqnarray*}  
 is an algebra isomorphism. 
\end{theorem}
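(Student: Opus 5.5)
The plan is to realise $\mathbf S$ geometrically, as pullback along an involution of the curve $\mathcal Y_{k,m}$ from Definition~\ref{d:Ykm}, using the isomorphism $qH^*(Gr(k,n),\C)[q\inv]\cong\C[\mathcal Y_{k,m}]$ of Lemma~\ref{l:Ykm}. Since the statement only concerns the localised ring with complex coefficients (and $\mathbf S$ is defined over $\Z$ on a $\Z[q^{\pm1}]$-basis, so it suffices to check multiplicativity after $\otimes\C$), any automorphism of $\C[\mathcal Y_{k,m}]$ whose action on the Schubert basis is the prescribed one proves the theorem.

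The natural candidate is the inversion map $\tau:[z_1,\dotsc,z_n]_{k,m}\mapsto[z_1\inv,\dotsc,z_n\inv]_{k,m}$. This map preserves the conditions $z_1^n=\dotsc=z_n^n$, distinctness and nonvanishing, respects the $S_k\times S_m$ symmetry, and is an involution. Hence $\tau^*$ is an algebra involution of $\C[\mathcal Y_{k,m}]$, and on the quantum parameter we get $\tau^*(q)=(-1)^{k+1}z_1^{-n}=q\inv$, using the formula for $q$ from Lemma~\ref{l:Ykm}.

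The main step is the identity
\[
S_\lambda(z_1\inv,\dotsc,z_k\inv)=q^{-\MaxDiag(\lambda)}S_{\mathbb S(\lambda)}(z_1,\dotsc,z_k)\quad\text{on }\mathcal Y_{k,m}.
\]
To prove it, I would first use $S_\lambda(z\inv)=S_{\tilde\lambda}(z)/(z_1\cdots z_k)^{\lambda_1}$, where $\tilde\lambda=(\lambda_1-\lambda_k,\dotsc,\lambda_1-\lambda_1)$ is the complement of $\lambda$ in a $k\times\lambda_1$ box. However, $\tilde\lambda$ need not be the right shape, so I would instead work with Frobenius coordinates (hook decomposition) and the Giambelli formula $S_\lambda=\det\big(S_{(a_i|b_j)}\big)_{i,j\le\mu}$ with $\mu=\MaxDiag(\lambda)$. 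This reduces the identity to the case of hooks, provided the hook identity produces the factor $q\inv$ uniformly. For a hook $(a|b)$ with $a\le m-1$ and $b\le k-1$, I would compute $S_{(a|b)}(z\inv)$ using the relation $\prod_{i=1}^n(1+z_ix)=1+(\prod z_i)x^n$ together with the description of $\mathbf Y_j$ as both $H_j(z_1,\dotsc,z_k)$ and $E_j(-z_{k+1},\dotsc,-z_n)$. The aim is to show $S_{(a|b)}(z\inv)=q\inv S_{(m-1-a\,|\,k-1-b)}(z)$. The effect of $\mathbb S$ is exactly to replace the arm $a_i$ by $m-\mu\ldots$ complements inside the strips. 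Concretely, within the $\mu\times\mu$ determinant, arms $a_i$ (strictly decreasing in $[0,m-1]$) are replaced by their complement-reversal, and similarly for legs. I need to check that, with the square fixed, this matches $\mathbb S$. Reindexing the matrix rows and columns by reversal introduces a sign that squares away.

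For the hook identity, I would first treat the elementary and complete cases: $E_j(z\inv)=E_{k-j}(z)/E_k(z)$, and $E_k(z)\,\mathbf Y_m=q$, so $1/E_k=q\inv \mathbf Y_m=q\inv H_m$. This gives $E_j(z\inv)=q\inv E_{k-j}H_m$, which should collapse on $\mathcal Y_{k,m}$ via Pieri and the vanishing $H_{m+1}=\dots=0$ to a single hook class. I expect these computations to be the main obstacle, particularly getting the signs and the boundary cases ($\mu$ equal to $k$ or $m$) right. If the direct hook computation gets messy, a fallback is to verify that $\tau^*$ agrees with $\mathbf S$ on the generators $\sigma^{(1^j)}$ and then appeal to Postnikov's description. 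I prefer, however, to keep the argument self-contained via the Giambelli reduction. Once the identity holds, $\mathbf S=\tau^*$ on the basis $q^\ell\sigma^\lambda$, so $\mathbf S$ is an algebra automorphism, and an involution.
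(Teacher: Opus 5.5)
Your set-up is sound. Inversion $[z_1,\dotsc,z_n]_{k,m}\mapsto[z_1\inv,\dotsc,z_n\inv]_{k,m}$ is an involution of $\mathcal Y_{k,m}$ with $\tau^*(q)=q\inv$. Via Lemma~\ref{l:Ykm}, the theorem is therefore equivalent to the single identity $S_\lambda(z\inv)=q^{-\MaxDiag(\lambda)}S_{\mathbb S(\lambda)}(z)$. Your Giambelli reduction is also correct: for $\lambda=(a_1,\dotsc,a_\mu\,|\,b_1,\dotsc,b_\mu)$ one has $\mathbb S(\lambda)=(m-1-a_\mu,\dotsc,m-1-a_1\,|\,k-1-b_\mu,\dotsc,k-1-b_1)$, and the double reversal costs no sign.

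The genuine gap is that the hook identity $S_{(a|b)}(z\inv)=q\inv S_{(m-1-a\,|\,k-1-b)}(z)$ is never proved, and after Giambelli it carries the entire content of the theorem. You only do the column case, $E_j(z\inv)=q\inv E_{k-j}H_m=q\inv S_{(m-1|k-j)}(z)$. That computation is correct: the other Pieri term $S_{(m|k-j-1)}$ involves only $H_r$ with $m<r<n$, which vanish on $\mathcal Y_{k,m}$ (whereas $H_n\neq 0$ there). But knowing $\tau^*$ on the $E_j$ determines $\tau^*$ as a ring map without giving its closed form on hooks. Applying $\tau^*$ to $S_{(a|b)}=\sum_i(-1)^iH_{a+1+i}E_{b-i}$ produces $q^{-2}$ times products of two large hook classes. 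Showing that these collapse to $q\inv S_{(m-1-a|k-1-b)}$ is a real quantum-multiplication (rim-hook) computation, and you do not carry it out. Your fallback is circular: agreement of $\tau^*$ and $\mathbf S$ on the generators $\sigma^{(1^j)}$ says nothing about $\mathbf S$ on other basis elements unless you already know $\mathbf S$ is multiplicative, which is what is to be proved.

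The missing step has a short direct proof that bypasses hooks entirely. Put $c=z_1^n=\dotsb=z_n^n$ and $\mu=\MaxDiag(\lambda)$. In the bialternant for $S_\lambda(z\inv)$, multiply row $i$ of numerator and denominator by $z_i^{k-1}$:
\[
S_\lambda(z\inv)=\frac{\det\big(z_i^{\,j-1-\lambda_j}\big)}{\det\big(z_i^{\,j-1}\big)} .
\]
The exponent $j-1-\lambda_j$ is negative exactly for $j\le\mu$. For those $j$, $z_i^{\,j-1-\lambda_j}=c\inv z_i^{\,n+j-1-\lambda_j}$ with $n+j-1-\lambda_j\in[k,n-1]$; for $j>\mu$ the exponent already lies in $[0,k-1]$. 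After reversing each of the two blocks, these exponents are exactly $\nu_i+k-i$ for $\nu=\mathbb S(\lambda)$, since $\nu_i=m+\mu-\lambda_{\mu+1-i}$ for $i\le\mu$ and $\nu_{\mu+r}=\mu-\lambda_{k+1-r}$ for $1\le r\le k-\mu$. The total sign is $(-1)^{\binom{\mu}{2}+\binom{k-\mu}{2}+\binom{k}{2}}=(-1)^{\mu(k-\mu)}=(-1)^{\mu(k+1)}$. Hence $S_\lambda(z\inv)=\big((-1)^{k+1}c\big)^{-\mu}S_{\mathbb S(\lambda)}(z)=q^{-\mu}S_{\mathbb S(\lambda)}(z)$, and with this inserted your argument is complete and self-contained.

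For comparison, the paper does not prove this theorem; it cites Postnikov and Hengelbrock. It uses Chern-root inversion only as the quoted identity \eqref{e:StrangeChern}. Its own reinterpretation (Proposition~\ref{p:strangeduality}) instead realises $\mathbf S$ as $\tau^*$ for the involution $yw_0B_-\mapsto y^Tw_{P_k}B_-$ of $Y^\circ_{P_k}$, and identifying that $\tau^*$ with $\mathbf S$ relies on the Chaput--Manivel--Perrin formula \eqref{e:CMPformula}.
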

\begin{remark} It is straightforward that $\mathbf S$ is an involution, and $\mathbf S$ is referred to as `strange duality'.  We remark that there is also an interesting generalisation to other types  \cite{ChaputManivelPerrinII,ChaputManivelPerrinIII}, and  a construction via the homology of the affine Grassmannian and (another) Peterson isomorphism \cite{CookmeyerMilicevic}. \end{remark}

\begin{definition}\label{d:strangeYP} We define an automorphism of the Peterson variety $Y^\circ_{P_k}$ by
\begin{eqnarray}\label{e:strangeYP}
    \tau:\qquad  Y^\circ_{P_k}\ &\overset\sim\longrightarrow & Y^\circ_{P_k}\\
    yw_0 B_-&\mapsto& y^T w_{P_k} B_-.\nonumber
\end{eqnarray}
Here, the fact that $y^T w_{P_k} B_-$ is an element of $Y^\circ_{P_k}$ follows from~\cite[Section~8]{rietsch2001flagvarieties}. Note that this is specific to the Grassmannian setting and does not generalise in this form to other partial flag varieties.
\end{definition}

\begin{proposition}\label{p:strangeduality}
Let us identify $\C[Y^\circ_{P_k}]$ with $qH^*(Gr(k,n),\C)[q\inv]$ via Peterson's isomorphism from Theorem~\ref{t:PetersonY}. Then $\tau^*$ recovers the strange duality map $\mathbf S$, giving a commutative diagram

\[\begin{tikzcd}
	{\C[Y_{P_k}^\circ]} && {\C[Y_{P_k}^\circ]} \\
	{qH^*(Gr(k,n))[q\inv]} && {qH^*(Gr(k,n))[q\inv].}
	\arrow["{\tau^*}", from=1-1, to=1-3]
	\arrow["{\cong}", from=2-1, to=1-1]
	\arrow["{\mathbf S}", from=2-1, to=2-3]
	\arrow["{\cong}"'from=2-3, to=1-3]
\end{tikzcd}\]
\end{proposition}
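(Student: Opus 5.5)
The plan is to exploit that both sides are algebra homomorphisms. $\tau$ is an automorphism of the affine variety $Y^\circ_{P_k}$, so $\tau^*$ is a ring automorphism of $\C[Y^\circ_{P_k}]$. By the theorem of Postnikov and Hengelbrock, $\mathbf S$ is an algebra isomorphism. It therefore suffices to check that the two maps agree on a set of algebra generators of $qH^*(Gr(k,n),\C)[q\inv]$. Rather than work with the matrix-coefficient functions $G^{w_\lambda}_{P_k}$ directly, I would transport everything to the curve $\mathcal Y_{k,m}$ of Definition~\ref{d:Ykm}. This uses three earlier identifications:
\begin{itemize}
\item the isomorphism $\mu_{P_k}:X_{P_k}\to Y^\circ_{P_k}$ of Proposition~\ref{p:comparison}, which matches $G^{w_\lambda}_{P_k}$ with $\mathfrak S^\lambda_{P_k}$;
\item the parametrisation $u^E$ of Lemma~\ref{l:uE};
\item the identification of $\C[\mathcal Y_{k,m}]$ with $qH^*(Gr(k,n),\C)[q\inv]$ from Lemma~\ref{l:Ykm}.
\end{itemize}
In these coordinates $\sigma^\lambda_{P_k}$ is the Schur polynomial $S_\lambda(z_1,\dotsc,z_k)$, and $q=(-1)^{k+1}z_1^n$.

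The first main step is to compute $\tau$ in these coordinates, i.e.\ to find the self-map $\tilde\tau$ of $\mathcal Y_{k,m}$ with $\mu_{P_k}\circ u^E\circ\tilde\tau=\tau\circ\mu_{P_k}\circ u^E$. Concretely, I take $y=\iota(u)$ with $u=u^E_n(x_1,\dotsc,x_m)$; by Remark~\ref{r:iotaequiv} this is again a lower-triangular Toeplitz matrix, with explicit entries. Then $y^T$ is upper-triangular Toeplitz, and I must rewrite $y^Tw_{P_k}B_-$ in the form $\iota(u')w_0B_-$ with $u'\in X_{P_k}$.

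The approach I would try first is via the $k$-th fundamental representation. The point $y^Tw_{P_k}B_-$ is determined, through the Plücker embedding, by the line through $y^T w_{P_k}\cdot v_{k+1}\wedge\dotsb\wedge v_n$. Its Plücker coordinates are minors of the Toeplitz matrix $y^T$. Using the structure of $X_{P_k}$ (only the minors $\Delta_k$ survive), these minors should be computable as Schur functions in the roots, as in the proof of Lemma~\ref{l:qviaparam}. I expect the outcome to be an inversion of the roots, of the form
\[
[z_1,\dotsc,z_n]_{k,m}\longmapsto[c/z_1,\dotsc,c/z_n]_{k,m}
\]
for a suitable constant $c$ (possibly a sign) and possibly a reindexing. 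This fits Hengelbrock's description at $q=1$, where on the unit circle $1/z=\bar z$ is complex conjugation.

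With $\tilde\tau$ in hand, I check the generators. For $q$ this is immediate: $q=(-1)^{k+1}z_1^n$ is sent to a multiple of $z_1^{-n}$, i.e.\ to $q\inv$ up to the expected sign. For the Schubert classes I need the identity
\[
S_\lambda(1/z_1,\dotsc,1/z_k)=q^{-\MaxDiag(\lambda)}\,S_{\mathbb S(\lambda)}(z_1,\dotsc,z_k)\quad\text{on }\mathcal Y_{k,m}.
\]
Since everything is multiplicative, it is enough to verify this for $\lambda=(1^j)$, $j=1,\dotsc,k$, because the elementary symmetric functions $E_j(z_1,\dotsc,z_k)$ together with $q^{\pm1}$ generate the ring. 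In that case
\[
E_j(1/z)=E_{k-j}(z)/E_k(z),
\]
and $E_k=\sigma^{(1^k)}$. On the curve, $E_k(z)\cdot\mathbf Y_m=q$ by \eqref{e:CYkm}, so $1/E_k=q\inv\,\sigma^{(m)}$. Hence $E_j(1/z)=q\inv\sigma^{(1^{k-j})}\star\sigma^{(m)}$. By the quantum Pieri rule (or the relations in $\Lambda_{k,m}$) this product equals $\sigma^{(m-?)\dotsb}$, the hook-shaped class, which is exactly $\sigma^{\mathbb S(1^j)}$: here $\MaxDiag=1$ and $\mathbb S$ sends the column $(1^j)$ to a hook.

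The main obstacle I anticipate is the first step: explicitly computing $\tau$ and pinning down all signs and normalisations. This includes the difference between $\bar w$ and $\dot w$, the anti-automorphism $\iota$, and the choice of $c$. Any error there would change the identity by a sign $(-1)^{|\lambda|}$ or a power of $q$. If the Plücker-coordinate computation becomes unwieldy, the fallback is to compute $\tau^*$ only on the two functions $G^{(1)}$ and $q$, directly from \eqref{e:Gwlambda} and the factorisation in Remark~\ref{r:qcomparison}. Since $\sigma^{(1)}$ and $q^{\pm1}$ generate after localising on an open dense set (where the $z_i$ are determined), that would be enough.
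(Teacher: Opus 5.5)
\textbf{Verdict: there is a genuine gap.} The step that carries the whole proposition is only announced, and your fallback does not work.

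\textbf{What is correct.} Your reduction is sound. $\tau^*$ and $\mathbf S$ are algebra homomorphisms, and the classes $\sigma^{(1^j)}$ together with $q^{\pm1}$ generate. Your generator check is also right:
$E_j(z\inv)=E_{k-j}(z)/E_k(z)=q\inv\sigma^{(1^{k-j})}\star\sigma^{(m)}=q\inv\sigma^{(m,1^{k-j})}=q\inv\sigma^{\mathbb S(1^j)}$.
No quantum term appears in the Pieri product, because $(1^{k-j})$ has fewer than $k$ rows.

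\textbf{The gap.} You only announce, and do not prove, that in coordinates with $G^\lambda_{P_k}=S_\lambda(z_1,\dotsc,z_k)$ the map $\tau$ is exactly $z\mapsto z\inv$. Your undetermined constant $c$ is not a normalisation detail. If $\tau$ acted by $z\mapsto c/z$, then $\tau^*\sigma^{(1)}=c\,\mathbf S(\sigma^{(1)})$. So the proposition is equivalent to $c=1$. For instance, $c=-1$ (a legitimate automorphism of $\mathcal Y_{k,m}$ when $n$ is even) would give $\tau^*(\sigma^\lambda)=(-1)^{|\lambda|}\mathbf S(\sigma^\lambda)$.

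Nothing in your plan determines $c$. The paper fixes the corresponding sign by positivity of $\tau$ and of the $G^\lambda_{P_k}$ on the totally positive part.

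A second problem: Lemma~\ref{l:Ykm} composed with Peterson's isomorphism gives only an abstract identification of $Y^\circ_{P_k}$ with $\mathcal Y_{k,m}$. To compute $\tau$ you need to know which explicit point $yw_0B_-$ corresponds to $[z]_{k,m}$. You cannot take this from \eqref{e:strangeeval} or Corollary~\ref{c:strange}, because in the paper those are derived from this proposition.

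\textbf{The fallback fails.} $\sigma^{(1)}$ and $q^{\pm1}$ do not generate the ring, not even birationally. Take $Gr(2,4)$ and the points with $\{z_1,z_2\}=\{\zeta,-\zeta\}$ and $\{z_1,z_2\}=\{i\zeta,-i\zeta\}$. They have the same $q$ and $\sigma^{(1)}=0$, but $\sigma^{(1,1)}$ equals $-\zeta^2$ and $\zeta^2$ respectively. This happens over every value of $q$.

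\textbf{How to close the gap.} The missing step reduces to two Jacobi--Trudi computations. Take $y=u^E_n(z_1,\dotsc,z_k)$ with $\{z_i\}\in\mathcal X_{n,k}$, so that $yw_0B_-\in Y^\circ_{P_k}$.
\begin{enumerate}
\item Since $\dot s_i v_{i+1}=v_i$, we have $\dot w_\lambda\cdot v_{k+1}\wedge\dotsb\wedge v_n=v_{r_1}\wedge\dotsb\wedge v_{r_m}$ with $r_a=k+a-\lambda'_a$ and no sign. Hence $G^\lambda_{P_k}(y^Tw_{P_k}B_-)=\det(E_{\lambda'_a-a+b}(z))=S_\lambda(z)$.
\item By Proposition~\ref{p:comparison} and Remark~\ref{r:iotaequiv}, $G^\lambda_{P_k}(yw_0B_-)=\mathfrak S^\lambda_{P_k}(\iota(y))$, where $\iota(y)$ has generating function $1/\prod_i(1-z_ix)$. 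The relevant minors give $S_{PD(\lambda)}(z)/S_{k\times m}(z)=S_\lambda(z_1\inv,\dotsc,z_k\inv)$.
\end{enumerate}
Together these give $c=1$, which completes your argument. As a by-product they prove Corollary~\ref{c:strange} directly, without Hengelbrock.

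\textbf{Comparison with the paper.} The paper avoids coordinates. It shows $\tau^*G^\lambda_{P_k}=\pm G^{PD(\lambda)}_{P_k}/G^{k\times m}_{P_k}$ using $\dot w_0\inv y\dot w_0=d_\epsilon y^Td_\epsilon\inv$ and $w_{PD(\lambda)}\equiv w_0w_\lambda$ modulo $W_{P_k}$. It removes the sign by positivity, and identifies the right-hand side with $\mathbf S(\sigma^\lambda)$ via the Chaput--Manivel--Perrin formula \eqref{e:CMPformula}. Your route replaces that formula with the Postnikov--Hengelbrock theorem plus quantum Pieri, and replaces the positivity argument with an explicit sign computation.
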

\begin{remark} Note the asymmetry in Definition~\ref{d:strangeYP}, with the left-hand side of \eqref{e:strangeYP} expressing an element of $Y^\circ_{P_k}$ as element of the big cell $B_-w_0B_-/B_-$ and the right-hand side as element of the Bruhat cell $B_+w_{P_k} B_-/B_-$. It is surprising that $\tau$ is an involution.  
\end{remark}
\begin{proof} We first claim that $\tau^*$ can be described by
\begin{equation}\label{e:taustar}
    \tau^*: \ G^\lambda_{P_k}\quad \mapsto\quad \frac{G^{PD(\lambda)}_{P_k}}{G^{{k\times m}}_{P_k}},
\end{equation} 
in terms of the functions $G^\lambda_{P_k}$ on $Y_{P_k}^\circ$ from Definition~\ref{d:Gwlambda}. 
To check this, note first that 
\begin{eqnarray}\label{e:tauGl}
G^\lambda_{P_k}(\tau(y w_0 B_-))&=&G^\lambda_{P_k}(y^T w_{P_k} B_-)= \langle y^T\cdot v_{k+1}\wedge\dotsc\wedge v_n,\dot w_\lambda \cdot v_{k+1}\wedge\dotsc\wedge v_n \rangle.
\end{eqnarray}
On the other hand we have 
\begin{eqnarray*}
G^{PD(\lambda)}_{P_k}(y \dot w_0 B_-)&=&\frac{\langle y\dot w_0\cdot v_{k+1}\wedge\dotsc\wedge v_n,\dot w_{PD(\lambda)} \cdot v_{k+1}\wedge\dotsc\wedge v_n \rangle}{\langle y\dot w_0\cdot v_{k+1}\wedge\dotsc\wedge v_n, v_{k+1}\wedge\dotsc\wedge v_n \rangle}
\\
&=&\frac{ \langle \dot w_0\inv y\dot w_0\cdot v_{k+1}\wedge\dotsc\wedge v_n,\dot w_0\inv \dot w_{PD(\lambda)} \cdot v_{k+1}\wedge\dotsc\wedge v_n \rangle}{\langle \dot w_0\inv y\dot w_0\cdot v_{k+1}\wedge\dotsc\wedge v_n, \dot w_0\inv\cdot v_{k+1}\wedge\dotsc\wedge v_n \rangle},
\end{eqnarray*}
and we have
\begin{eqnarray*}
G^{k\times m}_{P_k}(y \dot w_0 B_-)&=&\frac{1}{\langle \dot w_0\inv y\dot w_0\cdot v_{k+1}\wedge\dotsc\wedge v_n, \dot w_0\inv\cdot v_{k+1}\wedge\dotsc\wedge v_n \rangle}.
\end{eqnarray*}
As a consequence, 
\begin{eqnarray}\label{e:GPDoverGpt}
    \frac{G^{PD(\lambda)}_{P_k}(y \dot w_0 B_-)}{G^{k\times m}_{P_k}(y \dot w_0 B_-)}=\langle \dot w_0\inv y\dot w_0\cdot v_{k+1}\wedge\dotsc\wedge v_n,\dot w_0\inv \dot w_{PD(\lambda)} \cdot v_{k+1}\wedge\dotsc\wedge v_n \rangle.
\end{eqnarray}
Now consider $d_\epsilon$ as in the proof of Proposition~\ref{p:comparison}, for example, $d_\epsilon=\operatorname{diag(\pm 1,\dotsc, -1,1)}$. Since $y$ is Toeplitz, we have that $\dot w_0\inv y\dot w_0=d_\epsilon y^T d_\epsilon\inv$. We also have $w_{PD(\lambda)}=w_0w_\lambda \mod W_{P_k}$. Therefore, comparing \eqref{e:tauGl} and \eqref{e:GPDoverGpt}, we see that these two expressions differ at most by a sign, 
\begin{equation}
G^\lambda_{P_k}(\tau(y w_0 B_-))=\pm \frac{G^{PD(\lambda)}_{P_k}(y w_0 B_-)}{G^{k\times m}_{P_k}(y w_0 B_-)}.
\end{equation}
However, $\tau$ preserves total positivity, and the $G^\lambda_w$ are all positive on the totally positive part of $Y^\circ_{P_k}$. Therefore the sign must be positive, and the claim \eqref{e:taustar} is verified. 

Note that since $\tau^*$ is a well-defined algebra homomorphism, by transporting this formula to quantum cohomology we see that the point class $\sigma^{(k\times m)}_{P_k}$ must be an invertible element in $qH^*(Gr(k,n))[q\inv]$. This is already known.  It also follows also from a formula in quantum Schubert calculus given in \cite[Theorem~3.3]{ChaputManivelPerrinII}, which says
\begin{equation}\label{e:CMPformula}
\sigma_{P_k}^{k\times m}\star\sigma^{\lambda}_{P_k}=q^{\MaxDiag(\lambda)}\sigma_{P_k}^{PD({\mathbb S}(\lambda))}.
\end{equation}
Moreover, replacing $\lambda$ by ${\mathbb S}(\lambda)$ and reorganising \eqref{e:CMPformula} gives the following alternative formula for $\mathbf S$, 
see ~\cite{ChaputManivelPerrinIII}, which is
\begin{equation}
\mathbf S(\sigma_{P_k}^\lambda)=q^{-\MaxDiag(\lambda)}\sigma_{P_k}^{\mathbb S(\lambda)}=\frac{\sigma_{P_k}^{PD(\lambda)}}{\sigma_{P_k}^{k\times m}}.
\end{equation} This now implies the proposition.
  \end{proof}
  \begin{remark} \label{r:Slambdaeasy} We note that for $y=u^E_n(z_1,\dotsc, z_k)$ the equation \eqref{e:tauGl} leads to the explicit formula   $G^\lambda_{P_k}(\tau(y w_0 B_-))=G^\lambda_{P_k}(y^Tw_{P_k}B_-))=S_\lambda(z_1,\dotsc, z_k)$.
  \end{remark}
\begin{corollary}\label{c:strange} Suppose $\{z_1,\dotsc, z_k\}\in\mathcal X_{n,k}$. We have the  following identity,
\begin{equation}\label{e:StrangePeterson}
u^E_n(z_1\inv,\dotsc, z_k\inv)w_0B_-=u^E_n(z_1,\dotsc, z_k)^Tw_{P_k}B_-,
\end{equation}
that shows how to express an element of $Y^\circ_{P_{k}}$ in two equivalent ways.
\end{corollary}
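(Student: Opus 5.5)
The idea is to show that the two flags in \eqref{e:StrangePeterson} are separated by the same values of all the Peterson functions $G^\lambda_{P_k}$. Since these functions form a basis of $\C[Y^\circ_{P_k}]$ and $Y^\circ_{P_k}$ is affine, two points of $Y^\circ_{P_k}$ with equal values of every $G^\lambda_{P_k}$ coincide.

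Write $y=u^E_n(z_1,\dotsc,z_k)$. This lies in $X_{P_m}$ by Lemma~\ref{l:uE}, with $m=n-k$ and $\{z_i\}\in\mathcal X_{n,k}$. Set $y':=u^E_n(z_1\inv,\dotsc,z_k\inv)$. Then $\{z_i\inv\}\in\mathcal X_{n,k}$ as well, so $y'\in X_{P_m}$. Working in the conventions of Section~\ref{s:strange}, I take $y w_0B_-$ and $y'w_0B_-$ to lie in $Y^\circ_{P_k}$; if the conventions force an $\iota$ here, it is harmless, since $\iota=\Psi$ on Toeplitz matrices (Remark~\ref{r:iotaequiv}).

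By Remark~\ref{r:Slambdaeasy}, the right-hand side $\tau(yw_0B_-)=y^Tw_{P_k}B_-$ satisfies
\[
G^\lambda_{P_k}(y^Tw_{P_k}B_-)=S_\lambda(z_1,\dotsc,z_k).
\]
So it suffices to show
\[
G^\lambda_{P_k}(y'w_0B_-)=S_\lambda(z_1,\dotsc,z_k)\quad\text{for all }\lambda\subseteq k\times m .
\]
By the algebra structure (quantum Giambelli, Lemma~\ref{l:Ykm}), it is enough to check this on the generators $\lambda=(1^j)$ and on $q$. Alternatively, one can verify it directly for all $\lambda$ from the minor description.

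To compute $G^\lambda_{P_k}(y'w_0B_-)$, I would use Proposition~\ref{p:comparison} and formula \eqref{e:SwvsGw}. These express the value as a ratio of lower-left minors of a Toeplitz matrix related to $y'$ by $\Psi$, with Schubert label $\lambda$ or $\lambda'$. Concretely, $G^{\lambda}_{P_k}(\iota(u)w_0B_-)=\mathfrak S^\lambda(u)$, and $\iota(y')=\Psi(y')=u^E_n(-x_{k+1},\dotsc,-x_n)$ by Lemma~\ref{l:symm}(4). Here $x_{k+1},\dotsc,x_n$ are the remaining $n$th roots of $z_1^{-n}$.

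Remark~\ref{r:example} gives the model computation: for $u=u^E(w_1,\dotsc,w_r)$ one has $\mathfrak S^{\ydiagram{1}}=\sum w_i\inv$. More generally, the minor computation should give
\[
\mathfrak S^\mu(u^E(\mathbf w))=S_{\mu'}(w_1\inv,\dotsc,w_r\inv)
\]
(compare the proof of Theorem~\ref{thm:schub-class-lim}). Applying this with $\mathbf w=\{z_i\inv\}$ produces Schur polynomials in $z_1,\dotsc,z_k$. The leftover bookkeeping is to check that the conjugation and the $\mathcal Y_{k,m}$ relabelling from Lemma~\ref{l:Ykm} combine to give exactly $S_\lambda(z_1,\dotsc,z_k)$, with $S_\lambda(z)=S_{\lambda'}(-z_{k+1},\dotsc)$ used as needed.

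The main obstacle is establishing the general formula $\mathfrak S^\mu(u^E(\mathbf w))=S_{\mu'}(\mathbf w\inv)$, or at least its single-column case. Lemma~\ref{lem:schur-k-independent} reduces this to $\det(C)/c_r^{l+1}$, where $C$ is a Jacobi–Trudi-type determinant in the $E_i(\mathbf w)$. I would evaluate it via the complement bijection in Lemma~\ref{lem:one-strip-schub-lim}, which gives
\[
S_{((l+1)^{r-1})'}(\mathbf w)\,/\,E_r(\mathbf w)^{l+1}=H_{l+1}(\mathbf w\inv).
\]
I would then extend to all $\mu$ by Jacobi–Trudi.

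Finally, the signs and $\pm$ ambiguities (the choices of $\bar w$ versus $\dot w$) need to be controlled. My plan is to do this by positivity. For real $t>0$, the tuple $z=t\boldsymbol\zeta^+_{n,k}$ gives totally positive points on both sides, and there all $G^\lambda$ are positive. Both sides are regular in $\{z_i\}\in\mathcal X_{n,k}$, so the identity extends from this real locus to the whole irreducible component containing it. Covering the remaining components of $\mathcal X_{n,k}$ then requires the algebraic sign bookkeeping to be done honestly.
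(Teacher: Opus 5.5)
Your argument is correct, but it takes a genuinely different route from the paper.

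\textbf{The paper's route.} The paper never evaluates the Peterson functions at the left-hand point directly. It uses Proposition~\ref{p:strangeduality}, i.e.\ $\tau^*G^\lambda_{P_k}=G^{PD(\lambda)}_{P_k}/G^{k\times m}_{P_k}$ (so $\tau^*=\mathbf S$), together with Hengelbrock's description of $\mathbf S$ as inversion of the Chern roots (extended to general $q$). From these it shows that $\tau$ sends both $u^E_n(z\inv)w_0B_-$ and $u^E_n(z)^Tw_{P_k}B_-$ to $u^E_n(z\inv)^Tw_{P_k}B_-$, and then cancels $\tau$.

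\textbf{Your route.} You bypass $\tau$ and strange duality altogether. With $y'=u^E_n(z\inv)$:
\begin{itemize}
\item Proposition~\ref{p:comparison} gives $G^{(1^j)}_{P_k}(y'w_0B_-)=\mathfrak S^{(1^j)}_{P_k}(\iota(y'))$.
\item Lemma~\ref{l:symm}(4) gives $\iota(y')=u^E_n(-x_{k+1},\dots,-x_n)$.
\item The block determinant of Lemma~\ref{lem:schur-k-independent} and the complement identity $S_{((l+1)^{r-1})}(\mathbf w)=E_r(\mathbf w)^{l+1}H_{l+1}(\mathbf w\inv)$ give the value $H_j(-x_{k+1}\inv,\dots,-x_n\inv)$.
\item Lemma~\ref{l:Ykm}, applied to $[z_1,\dots,z_k,x_{k+1}\inv,\dots,x_n\inv]_{k,m}\in\mathcal Y_{k,m}$, turns this into $E_j(z_1,\dots,z_k)$, matching Remark~\ref{r:Slambdaeasy}.
\end{itemize}

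\textbf{Comparison.} The paper's route is shorter once the strange-duality machinery is in place, and it explains the identity conceptually: the two expressions are related through $\tau$. Yours is elementary and self-contained; it needs neither $\tau$, nor the formula for $\tau^*$, nor the external strange-duality results. It also proves the evaluation formula \eqref{e:strangeeval} directly, whereas the paper deduces that formula from this corollary. So your route would give the second proof of Theorems~\ref{thm:schub-class-lim} and~\ref{thm:schub-class-lim-k} without strange duality.

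\textbf{Clean-ups.}
\begin{itemize}
\item No sign control is needed. Every step above is an exact identity valid on all of $\mathcal X_{n,k}$, so you can drop the positivity/component argument.
\item The determinant is $\det C=S_{((l+1)^{r-1})}(\mathbf w)$, with $r-1$ rows of length $l+1$, not its conjugate.
\item You need the weighted form of the bijection in Lemma~\ref{lem:one-strip-schub-lim}: a column missing the letter $a$ has weight $E_r(\mathbf w)/w_a$. The paper only uses the unweighted count.
\item Since $\iota(y')\in X_{P_k}$, the $\mathbf w$ you apply the formula to should be $(-x_{k+1},\dots,-x_n)$, not $\{z_i\inv\}$. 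The variant with $\mathbf w=z\inv$ also works: pass through $\Psi^*\mathfrak S^{\lambda}_{P_k}=\mathfrak S^{\lambda'}_{P_m}$ on $X_{P_m}$ and use the single-row classes $\sigma^{(j)}_{P_k}$, $j\le m$, as generators.
\item The generator reduction is the cleanest finish. Here $q=X_kY_m$ is itself a polynomial in $X_1,\dots,X_k$, so agreement of the $G^{(1^j)}_{P_k}$ for $j\le k$ already forces the two points of the affine variety $Y^\circ_{P_k}$ to coincide.
\item If you instead extend to all $\lambda$ by Jacobi--Trudi, you must note that $H_j(\mathbf w\inv)=0$ for $k<j<n$ (again by Lemma~\ref{l:Ykm}). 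This matches the vanishing of the entries $E_j(t_1,\dots,t_k)$, $j>k$, in the dual Giambelli determinant.
\end{itemize}
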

\begin{proof}
 Recall the result of \cite{hengelbrock} that the strange duality automorphism $\sigma^{\lambda}\mapsto q^{-\MaxDiag(\lambda)} \sigma_{P_k}^{\mathbb{S}(\lambda)}$ represents complex conjugation when $q=1$, see also \cite[Section~3]{ChaputManivelPerrinIII}. For general $q$ it is straightforward to observe that this description extends, but with complex conjugation replaced by inversion on the level of the Chern roots $t_i$. As a consequence, the image $\mathbf S(\sigma^\lambda_{P_k})$ of the class $\sigma^\lambda_{P_k}=S_\lambda(t_1,\dotsc, t_k)$ in $qH^*(Gr(k,n),\C)[q\inv]$ has three different expressions, 
\begin{equation}\label{e:StrangeChern}
\mathbf{S}(\sigma^\lambda_{P_k})=\frac{S_{PD(\lambda)}}{S_{k\times m}}(t_1,\dotsc,t_k)=(-1)^{k+1}t_1^{-n\MaxDiag(\lambda)}S_{\mathbb{S}(\lambda)}(t_1,\dotsc,t_k)=S_\lambda(t_1\inv,\dotsc,t_k\inv),
\end{equation}
keeping in mind that $q$ in terms of the Chern roots $t_i$ is given by $(-1)^{k+1}t_1^n$, see Lemma~\ref{l:Ykm}.

We now apply the dictionary between $\C[Y_{P_k}^\circ]$ and $qH^*(Gr(k,n))[q\inv]$, whereby $\sigma^{\lambda}_{P_k}$ corresponds to $G^\lambda_{P_k}$ and the coordinates $z_i$ for  $u^E_n(z_1,\dotsc,z_k)w_{P_k}B_-\in Y_P^\circ$ correspond to the Chern roots $t_i$, compare Remark~\ref{r:Slambdaeasy}.

We now have the following two identities
\begin{eqnarray*}
\tau(u^E_n(z_1\inv,\dotsc,z_k\inv)w_0B_-)&=&u^E_n(z_1\inv,\dotsc,z_k\inv)^T w_{P_k}B_-,\\
\tau(u^E_n(z_1,\dotsc,z_k )^T w_{P_k}B_-)&=&u^E_n(z_1\inv,\dotsc,z_k\inv)^Tw_{P_k}B_-,
\end{eqnarray*}
where the first is just the definition of $\tau$, and the second follows from \eqref{e:StrangeChern} via Proposition~\ref{p:strangeduality}, see also Remark~\ref{r:Slambdaeasy}. Namely, the strange involution acts by inverting the Chern roots which correspond to the $z_i$ under our dictionary. Comparing the left-hand sides and using the fact that $\tau$ is invertible, we obtain the desired formula~\eqref{e:StrangePeterson}.
\end{proof}

We now apply these results to give an alternative proof of our results  concerning the Schubert class asymptotics.
\begin{proof}[Proof of Theorems~\ref{thm:schub-class-lim} and \ref{thm:schub-class-lim-k}]
 Consider the isomorphism $\mu_{P_k}:X_{P_k}\to Y^\circ_{P_k}$ from Proposition~\ref{p:comparison} given by $\mu_{P_k}(u)=\iota(u)w_0B_-$, and suppose $u=u^E_n(x_1,\dotsc, x_m)$ where $\{x_1,\dotsc, x_m\}\in \mathcal X_{n,m}$, as in Lemma~\ref{l:uE}.  
Extending $\{x_1,\dotsc, x_m\}$ to an element $[x_1,\dotsc,x_m,x_{m+1},\dotsc, x_{n}]_{m,k}\in \mathcal Y_{m,k}$ and combining Lemma~\ref{l:symm}, Remark~\ref{r:qsymm} and Corollary~\ref{c:strange}, we find that 
\[
\mu_{P_k}(u)=\iota(u^E_n(x_1,\dotsc,x_m))w_0B_-=u^E_n(-x_{m+1},\dotsc,-x_n)w_0B_-=u^E_n(-x_{m+1}\inv,\dotsc,-x_n\inv)^Tw_{P_k}B_-.
\]
Moreover, using also Proposition~\ref{p:comparison}, Remark~\ref{r:Slambdaeasy} and Lemma~\ref{l:Ykm}, this expression for $\mu_{P_k}(u)$ leads to the following explicit formula for the evaluation of the functions $\mathfrak S^{\lambda}_{P_k}$ on $u=u^E(x_1,\dotsc,x_m)\in X_{P_k}$,
\begin{equation}\label{e:strangeeval}
\mathfrak S^\lambda_{P_k}(u^E(x_1,\dotsc, x_k))=G^\lambda_{P_k}(\mu_{P_k}(u^E(x_1,\dotsc, x_k)))=S_\lambda(-x_{m+1}\inv,\dotsc,-x_n\inv)=S_{\lambda'}(x_1\inv,\dotsc,x_m\inv).
\end{equation}
Now suppose $u^{(n)}=u^{+}_n(t_n)\in X_{P_{n-m}}(\R_{>0})$ converges to $u^{\infty}$ as in Proposition~\ref{prop:lim-q-param}. We then have $\beta=\lim_{n\to\infty}(t_n)$ and $p_{u^\infty}(x)=(1+\beta x)^m$. Also assume $\beta\ne 0$. Since $u^+_n(t_n)=u^E_n(x_1,\dotsc, x_m)$ for $x_j=t_n\zeta^j_{n,m}$, we have that \eqref{e:strangeeval} implies
\[
\mathfrak S^\lambda_{P_{n-m}}(u^{(n)})=S_{\lambda'}((t_n\zeta^1_{n,m})\inv,\dotsc,(t_n\zeta^m_{n,m})\inv).
\]
Now, since $t_n\to\beta$ and $\zeta^j_{n,m}\to 1$  as $n\to\infty$ and $m$ is fixed, it follows that $\lim_{n\to\infty}\left(\mathfrak S^\lambda_{P_{n-m}}(u^{(n)})\right)= S_{\lambda'}(\frac 1\beta,\dotsc, \frac 1\beta)$, where $S_{\lambda'}$ is a Schur polynomial in $m$ variables.  

Finally, consider the case where $k$ is fixed and $u^{(n)}=u^E_n(t_n\boldsymbol{\zeta}^+_{n,n-k})$. If $u^{(n)}$ converges, then its limit is $u^\infty$ with $p_{u^\infty}(x)=\frac{1}{(1-\alpha x)^k}$ for  $\alpha=\lim_{n\to\infty}(t_n)$. Let us again assume $\alpha\ne 0$. Now we consider the alternative expression for $\mathfrak S^\lambda_{P_k}(u^{(n)})$ in terms of $S_\lambda$, again from \eqref{e:strangeeval}, which is 
\[
\mathfrak S^\lambda_{P_{k}}(u^{(n)})=S_{\lambda}((t_n\zeta^1_{n,k})\inv,\dotsc,(t_n\zeta^k_{n,k})\inv),
\]
using that if $(x_1,\dotsc,x_m)=t_n\boldsymbol{\zeta}^+_{n,m}$, then $(-x_{m+1},\dotsc,-x_n)=t_n\boldsymbol{\zeta}^+_{n,k}$, compare Lemma~\ref{l:symm}. Now $k$ is fixed and we can straightforwardly take the limit, which gives the desired fomula $\lim_{n\to\infty}\left(\mathfrak S^\lambda_{P_k}(u^{(n)})\right)= S_{\lambda}(\frac 1\alpha,\dotsc, \frac 1\alpha)$. 
\end{proof}
\begin{remark} 
Note that Remark~\ref{r:example} contains an example of the formula \eqref{e:strangeeval}. We may also use \eqref{e:strangeeval} to finally interpret the $x_i$ from Lemma~\ref{l:uE}. Namely,  in terms of the isomorphism with quantum cohomology from Theorem~\ref{t:Peterson}, recall that $\mathfrak S^\lambda_{P_k}$ represents $\sigma^{\lambda}_{P_k}=S_\lambda(t_1,\dotsc t_k)=S_{\lambda'}(-t_{k+1},\dotsc,-t_n)$, where $t_1,\dots, t_k$ are the Chern roots of the dual tautological subbundle, and $-t_{k+1},\dotsc, -t_n$ the Chern roots of the tautological quotient bundle. See also Section~\ref{s:qcoh}. We conclude that our `coordinates' $x_i$ for $u^E_n(x_1,\dotsc, x_m)\in X_{P_{m}}$ should therefore be considered as representing the inverted Chern roots of the tautological quotient bundle of $Gr(k,n)$.
\end{remark}
\section{Partial flag varieties and conjectures}\label{sec:conjectures}

 For a finite strongly increasing sequence of positive integers $\mathbf n=(n_1,\dotsc n_k)$,  let  $P_{\mathbf n}^{(n)}\coloneq P_{n_1,\dotsc,n_k}^{(n)}$ be the parabolic subgroup associated to the partial flag variety $SL_n/P_{\mathbf n}^{(n)}=\Fl_{n_1,\dotsc,n_k}(\C^n)$ (for $n>n_k$). We begin by stating the following conjectural analogue of Theorem~\ref{thm:schub-class-lim} for these more general parabolic subgroups.

\begin{conjecture}\label{con:Fin}
Fix $\mathbf n$ as above, and set $n_0=0$.
Consider a monotone (weakly) decreasing sequence of $n_k$ non-negative real numbers  $\left( \alpha_i \right)_{i=1}^{n_k}$ such that  $\alpha_{n_i}> \alpha_{n_i+1}$,  but $\alpha_j=\alpha_{j+1}$ otherwise. 
Let $u^{\infty}\in \mathrm{Toep}_{\infty}(\R_{\ge 0})$ be the infinite totally nonnegative Toeplitz matrix with generating function  
\begin{equation}\label{e:con1}
    p_{u^\infty }(x)=  \frac{1}{\prod\limits_{j=1}^{n_k}(1-\alpha_jx)} =\frac{1}{\prod\limits_{i=1}^{k}(1-\alpha_{n_i}x)^{n_i -n_{i-1}}}.
\end{equation}
Then there exists a sequence of matrices $u^{(n)}$, with $u^{(n)}\in X_{P_{\mathbf n}^{(n)}}(\R_{> 0})$, which converges to $u^{\infty}\in \mathrm{Toep}_{\infty}(\R_{\ge 0})$.
Conversely, every Toeplitz matrix $u^\infty$ with $p_{u^\infty}(x)$ as in \eqref{e:con1} arises as the limit of such a sequence.  

\end{conjecture}

\begin{remark}\label{r:con1dual}
We could have also stated a dual version of this conjecture, instead considering sequences $u^{(n)}$ where $u^{(n)}\in X_{P^{(n)}_{n-\mathbf m}}$, each corresponding to the partial flag variety with fixed codimensions, but this, as well as Conjecture~\ref{con:Fin}, is another special case of Conjecture~\ref{c:pflagconjectures-alt}.
\end{remark}
\begin{definition}\label{d:infiniteflagseqs}
Consider the set $\mathcal S$ of pairs $(\mathbf n,\mathbf m)$ of strongly increasing positive integer sequences, which may be finite or infinite. We write $\mathbf n=(n_1,n_2, n_3,\dotsc )$ and $\mathbf m=(m_1,m_2, m_3,\dotsc )$ for the sequences $\mathbf n$ and $\mathbf m$, respectively. We have four cases, according to which of the sequences are infinite. 

\begin{enumerate}
\item Suppose $(\mathbf n,\mathbf m)$ consists of two finite sequences (of length $k,\ell\ge 0$, respectively). Then we set
\[
\Fl_{(\mathbf n,\mathbf m)}(\C^n):=
\Fl_{n_1,\dotsc, n_k,n-m_\ell,\dotsc, n-m_1}(\C^n)
\]
whenever $n$ is large enough. 
\item If only $\mathbf n$ is finite, we set
\[
\Fl_{(\mathbf n,\mathbf m)}(\C^n):=
\Fl_{n_1,\dotsc, n_k,n-m_r,\dotsc, n-m_1}(\C^n)
\]
where $r$ is maximal for $n-m_r>n_k$ (and $n$ large enough for $r$ to exist). 
\item If only  $\mathbf m$ is finite, we set
\[
\Fl_{(\mathbf n,\mathbf m)}(\C^n):=
\Fl_{n_1,\dotsc, n_r,n-m_\ell,\dotsc, n-m_1}(\C^n)
\]
where $r$ is maximal for $n_r<n-m_\ell$ (and $n$ large enough for $r$ to exist). 
\item Given a pair of infinite sequences $(\mathbf n,\mathbf m)\in \mathcal S$ and
$n\in\N$, let us write 
\[
\Fl_{(\mathbf n,\mathbf m)}(\C^n):=
\Fl_{n_1,\dotsc, n_r,n-m_r,\dotsc n-m_1}(\C^n)
\]
where $r$ is the maximal index for which  $n_r<n-m_r$. 
\end{enumerate}
Let $P^{(n)}_{(\mathbf n,\mathbf m)}$ denote the associated parabolic in $SL_{n}$. 
\end{definition}

\begin{conjecture}\label{c:pflagconjectures-alt}
Suppose $(\mathbf n,\mathbf m)\in\mathcal S$ and recall the notations from above.
Consider a pair of infinite sequences  $\boldsymbol{\alpha}_{\mathbf{n}} \coloneq 
{\left(\alpha_i\right)}_{i\in \N}$ and $\boldsymbol{\beta}_{\mathbf{m}}\coloneq 
{\left( \beta_i \right)}_{i\in \N}$ in $\R_{\ge 0}$ satisfying the following conditions:
\begin{enumerate}
\item\label{item:1}  $\alpha_{n_i}> \alpha_{n_i+1}$ and $\beta_{m_i}> \beta_{m_i+1}$,
\item\label{item:2} $\alpha_j=\alpha_{j+1}$ (resp.\ $\beta_j = \beta_{j+1}$) for all $j\notin\{n_{1},n_2,\dotsc \}$ (resp.\ $j\notin  \{m_{1},m_2,\dotsc\}$),
 \item\label{item:4} $\sum \alpha_{i} <\infty$ and $\sum \beta_{i}<\infty$, 
  \item\label{item:3} If $\mathbf{n}$ (resp.\ $\mathbf{m}$) is finite of length $k$ (resp.\ $\ell$), then $\alpha_{j}=0$ for all $j>n_{k}$ and \ $\beta_{j}=0$ for all $j>m_{\ell}$.
  \end{enumerate}
  Define the infinite Toeplitz matrix $u^{\infty}\in \mathrm{Toep}_{\infty}(\R_{\ge 0})$ by its generating function \[
      p_{u^{\infty}}(x) = \frac{\prod\limits_{\beta\in \boldsymbol{\beta}_{\mathbf{m}}
        }(1 + \beta x)}{\prod\limits_{\alpha \in \boldsymbol{\alpha}_{\mathbf{n}}
        }(1-\alpha x)}.
    \]
Then there exists a sequence $(u^{(n)})_n$ with $u^{(n)}\in X_{P_{(\mathbf{n},\mathbf{m})}^{(n)}}(\R_{> 0})$ that converges to $u^{\infty}$. 
\end{conjecture}

\subsection{Full flag asymptotics review}

In Section~\ref{s:pflag-limits}, we will partially generalise Proposition~\ref{prop:lim-q-param} to a setting involving more general parabolic subgroups. The strategy of the proof will involve applying results from the full flag case obtained in ~\cite{rietsch2025totallypositivetoeplitzmatrices}. We recall the relevant results from ~\cite{rietsch2025totallypositivetoeplitzmatrices} below. Let $B_n$ denote the upper-triangular Borel subgroup of $SL_n$.

Recall first Remark~\ref{r:diforfullflag}, where we gave a version of Theorem~\ref{t:totposGP} that takes the form 
\begin{eqnarray}\label{e:dinparam}
  X_{B_{n}}(\R_{>0})&\xrightarrow{\sim}& T_{\mathrm{SL}_n}(\R_{>0})
\\
u\quad \ &\mapsto & \operatorname{diag}(d_1^{(n)}, d_2^{(n)},\dotsc, d_n^{(n)}),\notag
\end{eqnarray}
with $d_i^{(n)}=\frac{\Delta_{i}(u)}{\Delta_{i-1}(u)} $. For ease of notation we write $\Delta_{i}$ (resp.\ $d^{(n)}_{i}$) when it is clear we are taking the evaluation at a given point $u$, and we will leave out the superscript $(n)$ when $n$ is clear from context. Further composing \eqref{e:dinparam}  with the homeomorphism sending $\mathrm{diag}(d_{1},\dots,d_{n})\in T_{SL_n}(\R_{>0})$ to $ \left( \frac{d_{2}}{d_1},\dots,\frac{d_n}{d_{n-1}} \right)\in \R^{n}_{>0}$ gives a parameterisation \[
  (q_1, \dots, q_{n-1}): X_{B_{n}}(\R_{>0})\longrightarrow \R^{n}_{>0},
\]
 where $q_{i}(u) = \frac{d_{i+1}(u)}{d_{i}(u)}$, see~\cite[Theorem 2.1]{rietsch2025totallypositivetoeplitzmatrices}, where the theorem is stated in similar conventions. Note that this  recovers  Kostant's expression for the quantum parameters,  
 \[
   q_{i} = \frac{\Delta_{i+1}\Delta_{i-1}}{\Delta_{i}^{2}}, 
 \]
which is an instance of the formula~\eqref{e:qviadelta} we recalled earlier.

We can now state the following result from~\cite{rietsch2025totallypositivetoeplitzmatrices}, which relates closely to Theorem~\ref{t:fullflag}.
\begin{theorem}[{\cite[Theorem~2.7]{rietsch2025totallypositivetoeplitzmatrices}}]
\label{t:fflag-d-lim}Suppose $(u^{(n)})_n$ is a sequence of (finite) totally positive matrices $u^{(n)}\in X_{B_n}(\R_{>0})$ that converges uniformly to an element $u^{\infty}\in \mathrm{Toep}_{\infty}(\R_{\ge 0})$, and assume that the Schoenberg parameters of $u^{\infty}$ are all nonzero, so that 
\[
p_\infty(x)=e^{\gamma x}\frac{\prod_{j=1}^\infty(1+\beta_i x)}{\prod_{i=1}^\infty(1-\alpha_j x)}
\]
has infinitely many roots and poles. Let $(d_{1}^{(n)},\dots, d_{n}^{(n)})$ be the positive parameters associated to $u^{(n)}$ as in \eqref{e:dinparam}. Then 
\begin{equation}
\label{e:fflag-d-lim}
\lim\limits_{n\to \infty} \sqrt[n]{d_{i}^{(n)}(u^{(n)})} = \alpha_{i} \qquad \text{and}\qquad \lim\limits_{n\to \infty} \sqrt[n]{d_{n-j+1}^{(n)}(u^{(n)})} = \frac{1}{\beta_{j}}.
\end{equation} 
\end{theorem}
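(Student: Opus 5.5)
The plan is to deduce \eqref{e:fflag-d-lim} from the quantum parameter asymptotics \eqref{e:limq1}, \eqref{e:limq2} of Theorem~\ref{t:fullflag}, together with one anchoring computation at each end of the diagonal. By \eqref{e:dinparam} and the formula $q_i=d_{i+1}/d_i$, we have telescoping identities
\[
d^{(n)}_i=d^{(n)}_1\prod_{j=1}^{i-1}q^{(n)}_j,\qquad d^{(n)}_{n-j+1}=d^{(n)}_n\prod_{r=1}^{j-1}\bigl(q^{(n)}_{n-r}\bigr)^{-1}.
\]
Since $i$ and $j$ are fixed, taking $n$-th roots and applying \eqref{e:limq1}, \eqref{e:limq2} gives the following. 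If we know $\sqrt[n]{d^{(n)}_1}\to\alpha_1$ and $\sqrt[n]{d^{(n)}_n}\to 1/\beta_1$, then
\[
\sqrt[n]{d_i^{(n)}}\to\alpha_1\prod_{j<i}\frac{\alpha_{j+1}}{\alpha_j}=\alpha_i,\qquad
\sqrt[n]{d_{n-j+1}^{(n)}}\to\frac1{\beta_1}\prod_{r<j}\frac{\beta_r}{\beta_{r+1}}=\frac1{\beta_j}.
\]
This uses only finitely many limits, each finite and nonzero because all Schoenberg parameters are positive. So the proof reduces to the two anchor cases.

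For $d_1^{(n)}=\Delta_1(u^{(n)})=c^{(n)}_{n-1}$, I would compare with the coefficients of $p_\infty$. Its nearest singularity to the origin is a pole at $1/\alpha_1$. Since $\alpha_1>\alpha_2$ is a simple pole (the parameters are strictly decreasing), the coefficients satisfy $c_N\sim C\alpha_1^N$ with $C>0$, so $\sqrt[N]{c_N}\to\alpha_1$.

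For $d_n^{(n)}=1/\Delta_{n-1}(u^{(n)})$, I would use that for a unipotent lower-triangular matrix the lower-left $(n-1)\times(n-1)$ minor equals, up to sign, the $(n,1)$ entry of the inverse (Cramer's rule). For a Toeplitz $u$ this entry is the coefficient of $x^{n-1}$ in $1/p_u(x)$. Equivalently, by Remark~\ref{r:iotaequiv} and the involution $\iota$, it is the coefficient of $x^{n-1}$ in $1/p_u(-x)$, which is positive. For the limit, $1/p_\infty(-x)=e^{\gamma x}\prod(1+\alpha_ix)/\prod(1-\beta_jx)$ has leading simple pole at $1/\beta_1$, so its coefficients grow like $\beta_1^N$. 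This gives $\sqrt[n]{\Delta_{n-1}}\to\beta_1$ and hence $\sqrt[n]{d_n^{(n)}}\to 1/\beta_1$. This step is simply the $\alpha\leftrightarrow\beta$ mirror of the first anchor.

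The main obstacle is that entrywise convergence of $u^{(n)}$ says nothing about the far-out coefficients $c^{(n)}_{n-1}$ or the corresponding coefficient of $1/p_{u^{(n)}}$, which are exactly what the anchors need. This is where the hypothesis of \emph{uniform} convergence must be used. I would first try to interpret it as saying that the polynomials $p_{u^{(n)}}$, and their reciprocal series truncated mod $x^n$, approximate $p_\infty$ and $1/p_\infty(-x)$ with coefficient errors that are small relative to the coefficients themselves, i.e.\ with ratios tending to $1$ uniformly in the index. Subexponential relative errors would then suffice after taking $n$-th roots. If this interpretation is not available, the fallback is the observation that \eqref{e:limq1} itself must have been proved using exactly such control of the top coefficients, and to extract the anchors $d_1$ and $d_n$ from that same argument in \cite{rietsch2025totallypositivetoeplitzmatrices}.
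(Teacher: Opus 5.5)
The reduction is circular. In the paper, \eqref{e:limq1} and \eqref{e:limq2} are not independent input. The remark following the statement says explicitly that Theorem~\ref{t:fullflag} is a corollary of this theorem, obtained precisely through $q_i=d_{i+1}/d_i$. Since the $q_i$ are just ratios of consecutive $d_i$, knowing $\lim\sqrt[n]{q_i^{(n)}}$ and $\lim\sqrt[n]{q_{n-i}^{(n)}}$ for fixed $i$ is, together with your two anchors, literally equivalent to \eqref{e:fflag-d-lim}. Your telescoping step therefore moves the whole content of the theorem into the hypothesis. Even as a black box it would not suffice, because Theorem~\ref{t:fullflag} assumes strictly decreasing Schoenberg parameters, while the statement only assumes they are nonzero.

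The anchors are not established either. What you compute is the growth of the coefficients of $p_\infty(x)$ and $1/p_\infty(-x)$, i.e.\ $d_1$ and $d_n$ for the truncations $[u^\infty]_n$. The theorem, however, concerns $u^{(n)}$. You handle the transfer only through an unjustified reading of ``uniform convergence'' as uniform relative control of all coefficients. Your fallback is to borrow an argument from \cite{rietsch2025totallypositivetoeplitzmatrices}, which you do not supply.

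What is actually missing is a direct asymptotic analysis of the corner minors. One needs $\sqrt[n]{\Delta_i(u^{(n)})}\to\alpha_1\cdots\alpha_i$ for fixed $i$. Via your (correct) Cramer/$\iota$ observation, applied to $j\times j$ corners of the Toeplitz matrix of $1/p_{u^{(n)}}(-x)$, one also needs $\sqrt[n]{\Delta_{n-j}(u^{(n)})}\to\beta_1\cdots\beta_j$. Then $d_i=\Delta_i/\Delta_{i-1}$ gives the result with no appeal to the $q_i$.

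By Jacobi--Trudi, $\Delta_i$ is the rectangular Schur function $s_{((n-i)^i)}$ under $h_r\mapsto c_r$. For $i\ge 2$ it involves massive cancellation. For instance $\Delta_2=c_{n-2}^2-c_{n-1}c_{n-3}$, and every monomial in the expansion of $\Delta_i$ has size roughly $\alpha_1^{i(n-i)}$, while the determinant has size roughly $(\alpha_1\cdots\alpha_i)^{n}$. So even subexponential relative control of the individual entries $c^{(n)}_r$ cannot pin down $\sqrt[n]{\Delta_i(u^{(n)})}$; that is the most your interpretation of uniform convergence would give. One needs an argument exploiting total positivity, the structure of $X_{B_n}(\R_{>0})$, or the Schur-function structure of these determinants to control them. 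Your proposal never confronts this step, and it is exactly what the assumed $q$-limits were silently supplying.
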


\begin{remark}
Note that Theorem~\ref{t:fullflag} is a corollary of Theorem~\ref{t:fflag-d-lim} above, with the formulas  concerning the limit of $\sqrt[n]{q_{i}^{(n)}}$ and $\sqrt[n]{q_{n-i}^{(n)}}$ following via the identification $q_{i} = \frac{d_{i+1}}{d_{i}}$. 
\end{remark}
\begin{remark}
In the case of the Grassmannian $Gr(k,n)$ we also have a diagonal matrix $d\in T_{SL_n}^{W_{P_k}}$ governing the quantum parameters, see Section~\ref{s:qparam} and \eqref{e:qni}. We may therefore restate the results from Propositions \ref{prop:lim-q-param} and \ref{prop:lim-q-param2} concerning asymptotics of the quantum parameter as results about asymptotics of the entries of $d$ and observe that the resulting formulas are in fact directly analogous to the ones in Theorem~\ref{t:fflag-d-lim}. Namely, suppose $u^{(n)}\in X_{P_k}(\R_{>0})$, and let us write 
\[ d=\operatorname {diag}(\underbrace{d^{(n)}_1,\dotsc, d^{(n)}_1}_{k},\underbrace{d^{(n)}_2,\dotsc, d^{(n)}_2}_{n-k})
\]
for the matrix of positive parameters associated to $u^{(n)}$. Then, since $q^{(n)}=\frac {d^{(n)}_2}{d^{(n)}_1}$ and $\det(d)=1$, we have 
\[
\sqrt[n]{q^{(n)}}=\sqrt[n-k]{\frac{1}{d_1^{(n)}}}=\sqrt[n-m]{d_2^{(n)}},
\]
where $m:=n-k$. Now assume a sequence of $u^{(n)}\in X_{P_{n-m}}(\R_{>0})$ converges to a matrix $u^\infty$ with $\beta_1=\dotsc=\beta_m=\beta>0$ and all other Schoenberg parameters equal to $0$, as in Proposition~\ref{prop:lim-q-param}. Then the result that $\sqrt[n]{q^{(n)}_{n-m}(u^{(n)})\inv}$ converges to $\beta$ can be reinterpreted as 
\begin{equation}\label{e:limd2}
\lim_{n\to\infty}\sqrt[n]{d_2^{(n)}(u^{(n)})}=\frac{1}\beta.
\end{equation}
Note that if we write $d^{(n)}_n:=d^{(n)}_2$, considering this as the final matrix entry of $d$, the limit~\eqref{e:limd2} is completely analogous to the second formula in~\eqref{e:fflag-d-lim}. Similarly, if $u^{(n)}\in X_{P_{k}}(\R_{> 0})$ converges to a matrix $u^\infty$ with $\alpha_1=\dots=\alpha_k=\alpha>0$ and all other Schoenberg parameters are equal to $0$, as in Proposition~\ref{prop:lim-q-param2}, then the limit formula for $\alpha$ implies that
\begin{equation}\label{e:limd1}
\lim_{n\to\infty}\sqrt[n]{d_1^{(n)}(u^{(n)})}=\alpha,
\end{equation}
in terms of $d_1^{(n)}$. This is directly analogous with the first equality in \eqref{e:fflag-d-lim}.
\end{remark}

\subsection{Asymptotics of arbitrary partial flag varieties}\label{s:pflag-limits}
   Let $(\mathbf n,\mathbf m)$ be as in Definition~\ref{d:infiniteflagseqs}. We consider the subset $W^{P_\mathbf{n}}$ of $S_\infty=\bigcup_{n=1}^\infty S_n$ consisting of the $w\in S_\infty$ all of whose reduced expressions end in elements of the form $s_{n_i}$ with $n_i\in\mathbf n$. Similarly for $\mathbf m$. 
   Given $w\in S_\infty$ and $n$ large enough so that $w\in S_n$, we set $\hat{w}_n = w_0^{(n)} w {( w_0^{(n)})}^{-1}$, where $w_0^{(n)}$ is the longest element in $S_n$, see Theorem~\ref{t:fullflag}. Observe that if $w\in W^{P_{\mathbf m}}$ then  $\hat{w}_n \in W^{P_{n - \mathbf{m}}^{(n)}}$. 
   Also, recall that $S_w$ denotes the Schubert polynomial associated to $w$. If $N$ is minimal such that $w\in S_N$, then $S_w$ is a polynomial in $N-1$ variables.  Below we will write $\mathfrak S^w_{(n)}$  for the function $\mathfrak S^w_{P_{(\mathbf{n},\mathbf{m})}^{(n)}}$ on $X_{P_{(\mathbf{n},\mathbf{m})}^{(n)}}$ to simplify notation. 

\begin{conjecture}\label{c:pflag-schub-lim}

  Given $(\mathbf n,\mathbf m)$ as above,  consider a pair of infinite sequences $\boldsymbol{\alpha}_{\mathbf{n}}$ and $\boldsymbol{\beta}_{\mathbf{m}}$ in $\R_{>0}$ satisfying the conditions (1)-(3) from Conjecture~\ref{c:pflagconjectures-alt}. Assume we have a sequence $(u^{(n)})_n$ with $u^{(n)}\in X_{P_{(\mathbf{n},\mathbf{m})}^{(n)}}(\R_{> 0})$ converging uniformly to $u^{\infty}$, where
   \[
      p_{u^{\infty}}(x) = \frac{\prod\limits_{\beta\in \boldsymbol{\beta}_{\mathbf{m}}
        }(1 + \beta x)}{\prod\limits_{\alpha \in \boldsymbol{\alpha}_{\mathbf{n}}
        }(1-\alpha x)}=\frac{\prod\limits_{j=1}^\infty(1+\beta_i x)}{\prod\limits_{i=1}^\infty(1-\alpha_j x)},
    \]
with the Schoenberg parameters given by $\boldsymbol{\alpha}_{\mathbf{n}}$ and $\boldsymbol{\beta}_{\mathbf{m}}$. Given a permutation $w\in W^{P_{\mathbf{m}}}$, 
  we have the following limit formula,
  \begin{equation} \label{eq:schub-lim-pflag}
    \lim\limits_{n\to \infty}{\left( \mathfrak{S}^{\hat w_n}_{(n)}(u^{(n)}) \right)}_n = S_w\left( \frac{1}{\beta_1},\frac{1}{\beta_2},\frac{1}{\beta_3}, \dots \right).        \end{equation}
 If $w\in W^{P_{\mathbf{n}}}$, then we have the limit formula 
 \begin{equation}\label{eq:schub-lim-pflag2}
     \lim\limits_{n\to \infty}{\left( \mathfrak{S}^{w}_{(n)}(u^{(n)}) \right)}_n = S_w\left( \frac{1}{\alpha_1},\frac{1}{\alpha_2}, \frac{1}{\alpha_3},\dots \right) .
 \end{equation}
    
\end{conjecture}

\begin{proposition}\label{p:pflag-q-lim-inf}
Let $(\mathbf n,\mathbf m)$ be as in Definition~\ref{d:infiniteflagseqs}, and assume both sequences are infinite. Consider a pair of sequences $\boldsymbol{\alpha}_{\mathbf{n}}$ and $\boldsymbol{\beta}_{\mathbf{m}}$ as in Conjecture~\ref{c:pflagconjectures-alt}, and suppose we have a sequence $u^{(n)}\in X_{P_{(\mathbf{n},\mathbf{m})}^{(n)}}(\R_{> 0})$ converging  to $u^{\infty}$ with Schoenberg parameters given by $\boldsymbol{\alpha}_{\mathbf{n}}$ and $\boldsymbol{\beta}_{\mathbf{m}}$. Assume, moreover, that $|\Delta_{n_i}(u^{(n)})-\Delta_{n_i}([u^\infty]_n) |$ and $|\Delta_{n-m_j}(u^{(n)})-\Delta_{n-m_j}([u^\infty]_n) |$ converge to $0$ as $n\to\infty$. Then we have   
\begin{equation}\label{eq:pflag-q-lim}
  \lim\limits_{n\to \infty} \sqrt[n]{q_{n_{i}}^{(n)}(u^{(n)})} = \frac{\alpha_{n_{i+1}}}{\alpha_{n_{i}}},\quad\text{ and }\quad \lim\limits_{n\to \infty} \sqrt[n]{q_{n-m_{i}}^{(n)}(u^{(n)})} = \frac{\beta_{m_{i+1}}}{\beta_{m_{i}}}.
\end{equation}
\end{proposition}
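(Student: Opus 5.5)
The plan is to reduce the statement to the full flag asymptotics of Theorem~\ref{t:fflag-d-lim}, using formula \eqref{e:qviadelta}. That formula expresses the partial flag quantum parameters purely in terms of the corner minors $\Delta_{n_j}$ at the dimensions appearing in the flag.

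\textbf{Step 1: express $q$ through minors.} For $u^{(n)}\in X_{P^{(n)}_{(\mathbf n,\mathbf m)}}(\R_{>0})$ and a fixed index $i$, take $n$ large enough that $n_{i-1},n_i,n_{i+1}$ all belong to the dimension vector. By \eqref{e:qviadelta},
\[
\sqrt[n]{q_{n_i}^{(n)}(u^{(n)})}=\left(\frac{\Delta_{n_{i-1}}^{\,a}\,\Delta_{n_{i+1}}^{\,b}}{\Delta_{n_i}^{\,a+b}}\right)^{\frac{1}{nab}}(u^{(n)}),
\]
where $a=n_{i+1}-n_i$ and $b=n_i-n_{i-1}$, and $\Delta_{n_0}=\Delta_0=1$. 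The $n$-th roots of $q$ are therefore determined by the limits of $\sqrt[n]{\Delta_{n_j}(u^{(n)})}$. Similarly, the $\beta$-side formula involves $\Delta_{n-m_{i-1}},\Delta_{n-m_i},\Delta_{n-m_{i+1}}$, with $m_0=0$ and $\Delta_n=1$.

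\textbf{Step 2: transfer to truncations.} Since $u^\infty$ has infinitely many nonzero Schoenberg parameters and is totally positive, the truncations $[u^\infty]_n$ lie in $X_{B_n}(\R_{>0})$, by the discussion in Remark~\ref{r:Toepnnotinftp}. These truncations converge to $u^\infty$. Theorem~\ref{t:fflag-d-lim} then gives $\sqrt[n]{d_j^{(n)}([u^\infty]_n)}\to\alpha_j$. Because $\Delta_{r}=d_1\cdots d_r$, it follows that
\[
\sqrt[n]{\Delta_{r}([u^\infty]_n)}\to\alpha_1\cdots\alpha_r=\prod_{i}\alpha_{n_i}^{\,n_i-n_{i-1}}
\]
for each fixed $r$; the second expression uses the constancy of the $\alpha_j$ on blocks. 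For the $\beta$ side, use $\Delta_{n-s}=\prod_{j\le n-s}d_j=\bigl(\prod_{j>n-s}d_j\bigr)^{-1}$ (since $\det d=1$) to get $\sqrt[n]{\Delta_{n-s}([u^\infty]_n)}\to\beta_1\cdots\beta_s$.

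The hypothesis $|\Delta_{n_i}(u^{(n)})-\Delta_{n_i}([u^\infty]_n)|\to0$ must now be upgraded to equality of $n$-th-root limits. I expect this to be the main obstacle. The plan is to note that $\Delta_{r}([u^\infty]_n)$ is actually independent of $n$ once $n\ge r$: it is the $r\times r$ Toeplitz minor $\det(c_{r+i-j})$ of $u^\infty$, and it is strictly positive by total positivity. The hypothesis then says $\Delta_{n_i}(u^{(n)})$ converges to a positive constant, so its $n$-th root tends to $1$. This appears to conflict with the growth in Step~2, so the growth claimed there must be re-examined before relying on it.

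The resolution is that Step~2 was misapplied. In our lower-left-corner conventions, $\Delta_r$ on an $n\times n$ matrix takes rows $n-r+1,\dots,n$ and columns $1,\dots,r$, so its entries are $c_{n-r+\cdot}$. It therefore genuinely depends on $n$ and equals a Toeplitz minor of $u^\infty$ shifted by $n$. The hypothesis compares genuinely $n$-dependent quantities that tend to $0$, and the difference tending to $0$ is not scale-invariant. I would therefore read the hypothesis as controlling $u^{(n)}$ by the exponentially decaying quantities $\Delta_{n_i}([u^\infty]_n)\sim(\alpha_1\cdots\alpha_{n_i})^n$. I would first try to show it yields $\Delta_{n_i}(u^{(n)})/\Delta_{n_i}([u^\infty]_n)$ bounded above and below by subexponential factors. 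If only the stated hypothesis is available, I would use positivity together with a lower bound on $\Delta_{n_i}(u^{(n)})$ from the parametrisation of Theorem~\ref{t:totposGP} to compare the two.

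\textbf{Step 3: assemble the limits.} Granting $\sqrt[n]{\Delta_{n_j}(u^{(n)})}\to\prod_{l\le j}\alpha_{n_l}^{\,n_l-n_{l-1}}$, substitute into Step~1. The exponents telescope to
\[
\frac{\alpha_{n_{i+1}}^{\,ab}\cdot(\text{common factors})}{\alpha_{n_i}^{\,ab}\cdot(\text{common factors})},
\]
giving $\alpha_{n_{i+1}}/\alpha_{n_i}$. The analogous computation with $\Delta_{n-m_j}$ and the $\beta$'s gives $\beta_{m_{i+1}}/\beta_{m_i}$.
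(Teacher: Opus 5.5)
Your Steps~1--3 follow the paper's argument. The paper extends $q_{n_i}$ by the root formula \eqref{e:qnin} to a domain containing both $X_{P^{(n)}_{(\mathbf n,\mathbf m)}}$ and $X_{B_n}$, and evaluates it on $[u^\infty]_n\in X_{B_n}(\R_{>0})$ using $\Delta_\ell=d_1\cdots d_\ell$. It then applies Theorem~\ref{t:fflag-d-lim} and uses that the $\alpha$'s (resp.\ $\beta$'s) are constant on blocks; your telescoping in Step~3 is that computation.

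The gap is the transfer from $[u^\infty]_n$ to $u^{(n)}$. Step~3 is explicitly conditional (``Granting \dots''), and neither repair you suggest supplies the missing claim $\sqrt[n]{\Delta_{n_j}(u^{(n)})}\to\alpha_1\cdots\alpha_{n_j}$.
\begin{itemize}
\item Reading the hypothesis as a subexponential two-sided bound on $\Delta_{n_j}(u^{(n)})/\Delta_{n_j}([u^\infty]_n)$ strengthens the statement rather than proving it.
\item Theorem~\ref{t:totposGP} cannot give a lower bound on $\Delta_{n_j}(u^{(n)})$. It says the tuple of corner minors at the flag dimensions maps $X_P(\R_{>0})$ homeomorphically onto a full positive orthant, so it constrains no individual $\Delta_{n_j}$.
\item The convergence $u^{(n)}\to u^\infty$ does not supply such a bound directly either. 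It is coefficientwise, while for fixed $r$ and large $n$ the minor $\Delta_r(u^{(n)})$ involves only entries $c^{(n)}_j$ with $n-2r+1\le j\le n-1$, whose indices run off to infinity.
\end{itemize}

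Your diagnosis of why this step is delicate is right, and it applies equally to the paper. The paper disposes of this step in one sentence by citing the additive hypothesis. Write $a_n=\Delta_{n_j}([u^\infty]_n)$ and $b_n=\Delta_{n_j}(u^{(n)})$. The condition $|a_n-b_n|\to0$ forces $b_n/a_n\to1$, hence equal $n$-th root limits, only if $a_n$ stays bounded away from $0$, for instance when $\alpha_1\cdots\alpha_{n_j}>1$.

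However, $\sum_i\alpha_i<\infty$ forces $\alpha_1\cdots\alpha_{n_j}<1$ for all but finitely many $j$. The same holds for $\beta_1\cdots\beta_{m_j}$, which governs $\Delta_{n-m_j}$. For those $j$, $a_n\to0$ exponentially and the hypothesis reduces to $b_n\to0$. That leaves $\lim\sqrt[n]{b_n}$ undetermined: for example, $b_n$ of order $1/n$ would give limit $1$.

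So a complete proof along these lines needs the multiplicative form of the hypothesis, $\sqrt[n]{\Delta_{n_j}(u^{(n)})/\Delta_{n_j}([u^\infty]_n)}\to1$ (and likewise for $\Delta_{n-m_j}$) at the relevant indices. That is the ratio reading you proposed, and under it your Steps~1--3 are complete and coincide with the paper's proof. Otherwise one needs a genuinely new estimate on the corner minors of $u^{(n)}$, which neither your proposal nor the paper supplies.
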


\begin{proof}
Let $P^{(n)}=P_{(\mathbf{n},\mathbf{m})}^{(n)}$. We have that $u^{(n)}\in X_{P^{(n)}}(\R_{>0})$ converges to $u^{\infty}$. 
Consider the truncations ${\left[ u^{\infty} \right]}_n$, and note that these lie in $X_{B_n}(\R_{>0})$. Recall that  both $X_{P^{(n)}}$ and $X_{B_n}$ are contained in $X_{\{n_{i}\}}$, see Definition~\ref{d:XJ}, whenever $n$ is large enough. For such $n$, we consider the function $\mathbf q_{n_i}^{(n)}$ on $X_{\{n_{i}\}}(\R_{>0})$ given by 
\begin{equation}\label{e:qnin}
\mathbf q_{n_i}^{(n)}:= \sqrt[{(n_{i+1}-n_i)(n_i- n_{i-1})}]{\frac{\Delta_{n_{i-1}}^{n_{i+1}-n_i}\Delta_{n_{i+1}}^{n_i-n_{i-1}}}{\Delta_{n_i}^{n_{i+1}-n_{i-1}}}}.
\end{equation} 
Note that $\mathbf q_{n_i}^{(n)}|_{X^{(n)}_{P^{(n)}}}= q_{n_i}^{(n)}$ by \eqref{e:qviadelta}. The difference between $\mathbf q_{n_i}^{(n)}$ and $q_{n_i}^{(n)}$ is the extended domain. Recall that for ${\left[ u^{\infty} \right]}_n \in X_{B_{n}}(\R_{>0})$, we have \[
  \Delta_\ell ({\left[ u^{\infty} \right]}_n)= \prod\limits_{j=1}^{\ell}d^{(n)}_{j},
\]
where $(d_1^{(n)}, \dots, d_n^{(n)})$ are the positive parameters associated to $[u^\infty]_n$ in \eqref{e:dinparam}, see also Remark~\ref{r:diforfullflag}.  Hence, after some simplification, 
\begin{align}\label{e:pflag-q-fomula}
  \mathbf q_{n_i}^{(n)}\left( {\left[ u^{\infty} \right]}_n \right) &=  \sqrt[{(n_{i+1}-n_i)(n_i- n_{i-1})}]{\frac{{\left( \prod\limits_{j=1}^{n_{i-1}}d_{j}^{(n)} \right)}^{n_{i+1}-n_i}{\left( \prod\limits_{j=1}^{n_{i+1}}d_{j} ^{(n)}\right)}^{n_i-n_{i-1}}}{{\left( \prod\limits_{j=1}^{n_{i}}d_{j}^{(n)}\right)}^{n_{i+1}-n_{i-1}}}}\\
  &= \frac{\sqrt[{(n_{i+1}-n_i)}]{{\left( \prod\limits_{j=n_i+1}^{n_{i+1}}d_{j}^{(n)} \right)}}}{\sqrt[{(n_{i}-n_{i-1})}]{{\left( \prod\limits_{j=n_{i-1}+1}^{n_{i}}d_{j}^{(n)} \right)}}}.\notag
\end{align}
Now by Theorem~\ref{t:fflag-d-lim}, \[
  \lim\limits_{n\to \infty} \sqrt[n]{d_{k}^{(n)}\left( {\left[ u^{\infty} \right]}_n \right)} = \alpha_{k},
\]
so that in the limit, using the fact that both $\alpha_{n_{i-1}+1} = \cdots = \alpha_{n_{i}}$ and  $\alpha_{n_{i}+1} = \cdots = \alpha_{n_{i+1}}$, we find that
\begin{equation}
  \lim\limits_{n\to \infty} \sqrt[n]{\mathbf q_{n_i}^{(n)}\left( {\left[ u^{\infty} \right]}_n \right)}  = \lim\limits_{n\to \infty}\frac{\sqrt[{(n_{i+1}-n_i)}]{{\left( \prod\limits_{j=n_i+1}^{n_{i+1}}\sqrt[n]{d_{j}^{(n)}} \right)}}}{\sqrt[{(n_{i}-n_{i-1})}]{{\left( \prod\limits_{j=n_{i-1}+1}^{n_{i}}\sqrt[n]{d_{j}^{(n)}} \right)}}}= \frac{\sqrt[{(n_{i+1}-n_i)}]{{\left( \prod\limits_{j=n_i+1}^{n_{i+1}}\alpha_{n_{i+1}} \right)}}}{\sqrt[{(n_{i}-n_{i-1})}]{{\left( \prod\limits_{j=n_{i-1}+1}^{n_{i}}\alpha_{n_{i}}\right)}}}= \frac{\alpha_{n_{i+1}}}{\alpha_{n_{i}}}. 
\end{equation}
On the other hand, 
\[
\lim_{n\to\infty}\sqrt[n]{{\mathbf q}^{(n)}_{n_{i}}(u^{(n)})}=\lim_{n\to\infty} \sqrt[n]{\mathbf q_{n_i}^{(n)}\left( {\left[ u^{\infty} \right]}_n \right)},\]
 as follows from the  formula \eqref{e:qnin} that defines $\mathbf q_{n_i}$  both on $[u^\infty]_n$ and $u^{(n)}$, and  the additional convergence condition on the $\Delta_{n_i}$. 

But since $ q^{(n)}_{n_{i}}(u^{(n)})=\mathbf q^{(n)}_{n_{i}}(u^{(n)})$ by \eqref{e:qviadelta}, we have 
\begin{align*}
\lim\limits_{n \to \infty}\sqrt[n]{q^{(n)}_{n_{i}}(u^{(n)})}  
=\frac{\alpha_{n_{i+1}}}{\alpha_{n_{i}}}. 
\end{align*}
For the second formula we have for each  $m_i\in \mathbf{m}$, that 

\begin{equation}
  q_{n-m_{i}}^{(n)} \left( {\left[ u^{\infty} \right]}_n \right) = \frac{\sqrt[{(m_i-m_{i-1})}]{{\left( \prod\limits_{j=n-m_i+1}^{n-m_{i-1}}d^{(n)}_{j} \right)}}}{\sqrt[{(m_{i+1}-m_{i})}]{{\left( \prod\limits_{j=n-m_{i+1}+1}^{n-m_{i}}d^{(n)}_{j} \right)}}}, 
\end{equation}
as a consequence of \eqref{e:pflag-q-fomula}. This, combined with Theorem~\ref{t:fflag-d-lim}, implies the limit formula 
\[
  \lim\limits_{n \to \infty}\sqrt[n]{\mathbf q^{(n)}_{n-m_{i}}([u^\infty]_{n})} = \frac{\beta_{m_{i+1}}}{\beta_{m_{i}}}.
\]
Now again $\sqrt[n]{\mathbf q^{(n)}_{n-m_{i}}(u^{(n)})}$ has the same limit as $ \sqrt[n]{\mathbf q_{n-m_i}^{(n)}\left( {\left[ u^{\infty} \right]}_n \right)}$ by the convergence condition on the $\Delta_{n-m_j}$ and $\sqrt[n]{\mathbf q^{(n)}_{n-m_{i}}(u^{(n)})}=\sqrt[n]{q^{(n)}_{n-m_{i}}(u^{(n)})}$, so that the  proposition follows.  
\end{proof}

    Note that already the statement of Proposition~\ref{p:pflag-q-lim-inf} requires both sequences $\mathbf n$ and $\mathbf m$ to be infinite. 
    
 For the more general case of partial flag varieties $\Fl_{(\mathbf n,\mathbf m)}$ where $\mathbf n$ and/or $\mathbf m$ may be finite, we make the following conjecture.  
\begin{conjecture}\label{c:pflag-q-lim}
Let $(\mathbf n,\mathbf m)$ be as in Definition~\ref{d:infiniteflagseqs}. Consider a pair of sequences $\boldsymbol{\alpha}_{\mathbf{n}}$ and $\boldsymbol{\beta}_{\mathbf{m}}$ as in Conjecture~\ref{c:pflagconjectures-alt}, and suppose we have a sequence $u^{(n)}\in X_{P_{(\mathbf{n},\mathbf{m})}^{(n)}}(\R_{> 0})$ converging as in Proposition~\ref{p:pflag-q-lim-inf} to $u^{\infty}$, with Schoenberg parameters given by $\boldsymbol{\alpha}_{\mathbf{n}}$ and $\boldsymbol{\beta}_{\mathbf{m}}$. If $\mathbf{n}$ is finite of length $k$, then 
\begin{equation}\label{e:qinfnfin}
  \lim\limits_{n\to \infty} \sqrt[n]{q_{n_{i}}^{(n)}(u^{(n)})} = \begin{cases}
      \frac{\alpha_{n_{i+1}}}{\alpha_{n_{i}}} & i\leq k-1, \\
      \frac{1}{\alpha_{n_{i}}} & i = k .\\
  \end{cases}
  \end{equation}
Similarly, if $\mathbf{m}$ is finite of length $\ell$, then
  \begin{equation}\label{e:qinfmfin}
      \lim\limits_{n\to \infty} \sqrt[n]{q_{n-m_{i}}^{(n)}(u^{(n)})} = \begin{cases}
          \frac{\beta_{m_{i+1}}}{\beta_{m_{i}}}, & i\leq \ell-1 \\
           \frac{1}{\beta_{m_{i}}} & i=\ell. 
      \end{cases} 
  \end{equation}
\end{conjecture}
\begin{remark}
This conjecture holds in the  Grassmannian setting, that is,  if either $\mathbf n=\{k\}$ and $\mathbf m=\emptyset$, or if  $\mathbf n=\emptyset$ and $\mathbf m=(m)$. Namely, in these two cases it follows from Propositions~\ref{prop:lim-q-param2} and ~\ref{prop:lim-q-param}, respectively, with formula \eqref{e:qinfnfin} recovering \eqref{e:qalpha}, and formula \eqref{e:qinfmfin} recovering \eqref{e:qtobeta}.
\end{remark}
\bibliographystyle{plain}
\bibliography{references}

\begin{thebibliography}{10}

\bibitem{AESW}
Michael Aissen, Albert Edrei, I.~J. Schoenberg, and Anne Whitney.
\newblock On the generating functions of totally positive sequences.
\newblock {\em Proc. Natl. Acad. Sci. USA}, 37:303--307, 1951.

\bibitem{ASW}
Michael Aissen, I.~J. Schoenberg, and Anne Whitney.
\newblock On the generating functions of totally positive sequences. {I}.
\newblock {\em J. Anal. Math.}, 2:93--103, 1952.

\bibitem{Arkani-Hamedetalbook}
Nima Arkani-Hamed, Jacob Bourjaily, Freddy Cachazo, Alexander Goncharov,
  Alexander Postnikov, and Jaroslav Trnka.
\newblock {\em Grassmannian geometry of scattering amplitudes}.
\newblock Cambridge University Press, Cambridge, 2016.

\bibitem{ArkaniHamed2013jha}
Nima Arkani-Hamed and Jaroslav Trnka.
\newblock {The Amplituhedron}.
\newblock {\em JHEP}, 10:030, 2014.

\bibitem{BaoHe}
Huanchen Bao and Xuhua He.
\newblock Product structure and regularity theorem for totally nonnegative flag
  varieties.
\newblock {\em Invent. Math.}, 237(1):1--47, 2024.

\bibitem{BerensteinZelevinsky}
Arkady Berenstein and Andrei Zelevinsky.
\newblock Total positivity in {S}chubert varieties.
\newblock {\em Comment. Math. Helv.}, 72(1):128--166, 1997.

\bibitem{BERTRAM1997289}
Aaron Bertram.
\newblock Quantum schubert calculus.
\newblock {\em Advances in Mathematics}, 128(2):289--305, 1997.

\bibitem{BCFF}
Aaron Bertram, Ionu\c~t{} Ciocan-Fontanine, and William Fulton.
\newblock Quantum multiplication of {S}chur polynomials.
\newblock {\em J. Algebra}, 219(2):728--746, 1999.

\bibitem{BDW}
Aaron Bertram, Georgios Daskalopoulos, and Richard Wentworth.
\newblock Gromov invariants for holomorphic maps from {R}iemann surfaces to
  {G}rassmannians.
\newblock {\em J. Amer. Math. Soc.}, 9(2):529--571, 1996.

\bibitem{BorodinOlshanskiBook}
Alexei Borodin and Grigori Olshanski.
\newblock {\em Representations of the infinite symmetric group}, volume 160 of
  {\em Camb. Stud. Adv. Math.}
\newblock Cambridge: Cambridge University Press, 2016.

\bibitem{BuchGrassmannianQCoh}
Anders~Skovsted Buch.
\newblock Quantum cohomology of {G}rassmannians.
\newblock {\em Compositio Math.}, 137(2):227--235, 2003.

\bibitem{BuchWang}
Anders~Skovsted Buch and Chengxi Wang.
\newblock Positivity determines the quantum cohomology of {G}rassmannians.
\newblock {\em Algebra Number Theory}, 15(6):1505--1521, 2021.

\bibitem{ChaputManivelPerrinIII}
P.~E. Chaput, L.~Manivel, and N.~Perrin.
\newblock Quantum cohomology of minuscule homogeneous spaces {III}.
  {S}emi-simplicity and consequences.
\newblock {\em Canad. J. Math.}, 62(6):1246--1263, 2010.

\bibitem{ChaputManivelPerrinII}
Pierre-Emmanuel Chaput, Laurent Manivel, and Nicolas Perrin.
\newblock Quantum cohomology of minuscule homogeneous spaces. {II}. {H}idden
  symmetries.
\newblock {\em Int. Math. Res. Not. IMRN}, (22):Art. ID rnm107, 29, 2007.

\bibitem{CookmeyerMilicevic}
Jonathan Cookmeyer and Elizabeth Mili\'cevi\'c.
\newblock Applying parabolic {P}eterson: affine algebras and the quantum
  cohomology of the {G}rassmannian.
\newblock {\em J. Comb.}, 10(1):129--162, 2019.

\bibitem{Edrei52}
Albert Edrei.
\newblock On the generating functions of totally positive sequences {II}.
\newblock {\em J. Analyse Math.}, 2:104--109, 1952.

\bibitem{FockGoncharovI}
Vladimir Fock and Alexander Goncharov.
\newblock Moduli spaces of local systems and higher {T}eichm\"uller theory.
\newblock {\em Publ. Math. Inst. Hautes \'Etudes Sci.}, (103):1--211, 2006.

\bibitem{FOMINICM2010}
Sergey Fomin.
\newblock {\em Total Positivity and Cluster Algebras}, pages 125--145.

\bibitem{FGP}
Sergey Fomin, Sergei Gelfand, and Alexander Postnikov.
\newblock Quantum {Schubert} polynomials.
\newblock {\em J. Am. Math. Soc.}, 10(3):565--596, 1997.

\bibitem{Grochenig2023}
Karlheinz Gr{\"o}chenig.
\newblock {\em Schoenberg's Theory of Totally Positive Functions and the
  Riemann Zeta Function}, pages 193--210.
\newblock Springer International Publishing, Cham, 2023.

\bibitem{hengelbrock}
Harald Hengelbrock.
\newblock An involution on the quantum cohomology ring of the {G}rassmannian.
\newblock https://arxiv.org/abs/math/0205260, 2002.

\bibitem{Karlin}
S.~Karlin.
\newblock Total positivity. {Volume} {I}.
\newblock {Stanford} {University} {Press}. {XIV}, 576 p., 1968.

\bibitem{Katkova}
Olga~M. Katkova.
\newblock Multiple positivity and the {R}iemann zeta-function.
\newblock {\em Comput. Methods Funct. Theory}, 7(1):13--31, 2007.

\bibitem{KodamaWilliams}
Yuji Kodama and Lauren Williams.
\newblock {KP} solitons and total positivity for the {Grassmannian}.
\newblock {\em Invent. Math.}, 198(3):637--699, 2014.

\bibitem{Kostant:qcoh}
Bertram Kostant.
\newblock Flag manifold quantum cohomology, the {T}oda lattice, and the
  representation with highest weight {$\rho$}.
\newblock {\em Selecta Math. (N.S.)}, 2(1):43--91, 1996.

\bibitem{LamPP}
Thomas Lam and Pavlo Pylyavskyy.
\newblock Total positivity in loop groups. {I}: {Whirls} and curls.
\newblock {\em Adv. Math.}, 230(3):1222--1271, 2012.

\bibitem{LamRietsch}
Thomas Lam and Konstanze Rietsch.
\newblock Total positivity, {Schubert} positivity, and geometric {Satake}.
\newblock {\em J. Algebra}, 460:284--319, 2016.

\bibitem{LRY:Schubert}
Changzheng Li, Konstanze Rietsch, and Mingzhi Yang.
\newblock A {P}eterson program for general {S}chubert varieties and mirror
  symmetry.
\newblock 2026.

\bibitem{Lusztig94}
George Lusztig.
\newblock Total positivity in reductive groups.
\newblock In {\em Lie theory and geometry}, volume 123 of {\em Progr. Math.},
  pages 531--568. Birkh\"auser Boston, Boston, MA, 1994.

\bibitem{LUSZTIG1997}
George Lusztig.
\newblock Total positivity and canonical bases.
\newblock In {\em Algebraic groups and {L}ie groups}, volume~9 of {\em Austral.
  Math. Soc. Lect. Ser.}, pages 281--295. Cambridge Univ. Press, Cambridge,
  1997.

\bibitem{Lusztig1998TotalPI}
George Lusztig.
\newblock Total positivity in partial flag manifolds.
\newblock {\em Representation Theory of The American Mathematical Society},
  2:70--78, 1998.

\bibitem{Lusztig2019TotalPI}
George Lusztig.
\newblock Total positivity in reductive groups, ii.
\newblock {\em Bulletin of the Institute of Mathematics Academia Sinica NEW
  SERIES}, 2019.

\bibitem{LUSZTIG2023}
George Lusztig.
\newblock On the totally positive grassmannian.
\newblock {\em Bull. Math. Soc. Sci. Math. Roumanie (N.S.)},
  66(114)(4):455--458, 2023.

\bibitem{peterson}
D.~Peterson.
\newblock {Quantum cohomology of $G/P$}.
\newblock {Lecture Course, MIT, Spring Term}, 1997.

\bibitem{PostnikovAffine}
Alexander Postnikov.
\newblock Affine approach to quantum {S}chubert calculus.
\newblock {\em Duke Math. J.}, 128(3):473--509, 2005.

\bibitem{postnikovgrassmannian}
Alexander Postnikov.
\newblock Total positivity, grassmannians, and networks, 2006.

\bibitem{rietsch2001grassmannians}
Konstanze Rietsch.
\newblock {Quantum cohomology rings of Grassmannians and total positivity}.
\newblock {\em Duke Mathematical Journal}, 110(3):523 -- 553, 2001.

\bibitem{rietsch2001flagvarieties}
Konstanze Rietsch.
\newblock Totally positive {Toeplitz} matrices and quantum cohomology of
  partial flag varieties.
\newblock {\em J. Am. Math. Soc.}, 16(2):363--392, 2003.

\bibitem{rietschNagoya}
Konstanze Rietsch.
\newblock A mirror construction for the totally nonnegative part of the
  {P}eterson variety.
\newblock {\em Nagoya Math. J.}, 183:105--142, 2006.

\bibitem{rietsch2001err}
Konstanze Rietsch.
\newblock Errata to: ``{T}otally positive {T}oeplitz matrices and quantum
  cohomology of partial flag varieties'' [{J}. {A}mer. {M}ath. {S}oc. {\bf 16}
  (2003), no. 2, 363--392; mr1949164].
\newblock {\em J. Amer. Math. Soc.}, 21(2):611--614, 2008.

\bibitem{rietsch:mirror}
Konstanze Rietsch.
\newblock A mirror symmetric construction of {$qH^\ast_T(G/P)_{(q)}$}.
\newblock {\em Adv. Math.}, 217(6):2401--2442, 2008.

\bibitem{rietsch2025totallypositivetoeplitzmatrices}
Konstanze Rietsch.
\newblock Totally positive {T}oeplitz matrices: classical and modern.
\newblock https://arxiv.org/abs/2509.25163, 2025.

\bibitem{rietsch2025tropicaltoeplitzmatricesparametrisations}
Konstanze Rietsch.
\newblock Tropical toeplitz matrices and parametrisations.
\newblock arXiv:2509.06944[math.rt], 2025.

\bibitem{Schoenberg:48}
I.~J. Schoenberg.
\newblock Some analytical aspects of the problem of smoothing.
\newblock In {\em Studies and {E}ssays {P}resented to {R}. {C}ourant on his
  60th {B}irthday, {J}anuary 8, 1948}, pages 351--370. 1948.

\bibitem{SiebertTian}
Bernd Siebert and Gang Tian.
\newblock On quantum cohomology rings of {F}ano manifolds and a formula of
  {V}afa and {I}ntriligator.
\newblock {\em Asian J. Math.}, 1(4):679--695, 1997.

\bibitem{Springer:book}
T.~A. Springer.
\newblock {\em Linear algebraic groups}.
\newblock Modern Birkh\"{a}user Classics. Birkh\"{a}user Boston, Inc., Boston,
  MA, second edition, 2009.

\bibitem{Thoma}
E.~Thoma.
\newblock Die unzerlegbaren, positive–definiten {K}lassenfunktionen der
  abz\"ahlbar un-endlichen, symmetrischen {G}ruppe.
\newblock {\em Math. Zeitschr.}, 85:40--6, 1964.

\bibitem{KEROVVERSHIK1981}
A.~M. Vershik and S.~V. Kerov.
\newblock Asymptotic theory of characters of the symmetric group.
\newblock {\em Functional Analysis and Its Applications}, 15(4):246--255,
  October 1981.

\bibitem{WittenGrassmannian}
Edward Witten.
\newblock The {V}erlinde algebra and the cohomology of the {G}rassmannian.
\newblock In {\em Geometry, topology, \& physics}, volume~IV of {\em Conf.
  Proc. Lecture Notes Geom. Topology}, pages 357--422. Int. Press, Cambridge,
  MA, 1995.

\end{thebibliography}

\end{document}